\newcommand\ppm{\{\!\pm\!\}}
\newcommand{\Whyp}[5]{\,\mbox{}_{#1}W_{#2}\!\left({#3};{#4};{#5}\right)}
\newcommand{\qhyp}[5]{\,\mbox{}_{#1}\phi_{#2}\!\left(
\genfrac{}{}{0pt}{}{#3}{#4};#5\right)}
\newcommand{\hyp}[5]{\,\mbox{}_{#1}F_{#2}\!\left(
 \genfrac{}{}{0pt}{}{#3}{#4};#5\right)}
\newtheorem{thm}{Theorem}[section]
\newtheorem{cor}[thm]{Corollary}
\newtheorem{rem}[thm]{Remark}
\newtheorem{lem}[thm]{Lemma}
\def\eqnarray{\stepcounter{equation}\let\@currentlabel=\theequation
\global\@eqnswtrue
\tabskip\@centering\let\\=\@eqncr
$$\halign to \displaywidth\bgroup\hfil\global\@eqcnt\z@
$\displaystyle\tabskip\z@{##}$&\global\@eqcnt\@ne
\hfil$\displaystyle{{}##{}}$\hfil
&\global\@eqcnt\tw@ $\displaystyle{##}$\hfil
\tabskip\@centering&\llap{##}\tabskip\z@\cr}
\def\endeqnarray{\@@eqncr\egroup
\global\advance\c@equation\m@ne$$\global\@ignoretrue}
\def\@yeqncr{\@ifnextchar [{\@xeqncr}{\@xeqncr[5pt]}}
\newcommand{\Z}{\mathbb{Z}} %INTEGERS
\newcommand{\R}{\mathbb{R}} %REAL NUMBERS
\newcommand{\C}{\mathbb{C}} %COMPLEX NUMBERS
\newcommand{\N}{\mathbb{N}} %POSITIVE NATURAL NUMBERS, n = 1, 2, ...
\newcommand{\CC}{{{\mathbb C}}}
\newcommand{\CCast}{{{\mathbb C}^\ast}}
\newcommand{\expe}{{\mathrm e}}
\newcommand{\dd}{{\mathrm d}}
\newcommand{\midtilde}{\raisebox{-0.25\baselineskip}{\textasciitilde}}
\let\svus_
\def\lowerit#1{\ThisStyle{\raisebox{-2\LMpt}{$\SavedStyle#1$}}\egroup}
\begin{document}

\renewcommand{\PaperNumber}{***}

\FirstPageHeading

\ShortArticleName{Product formulas for basic hypergeometric series}

\ArticleName{Product formulas for basic hypergeometric series\\by
evaluations of Askey--Wilson polynomials}

% Names of the authors for the title of the paper
\Author{Howard S. Cohl$\,^{\ast}\orcidB{}$ and Michael J. Schlosser$\,^{\dag}\orcidC{}$ 
%Please carefully check the accuracy of 
%names and affiliations. 
%$\,^{\dag}$ and 
%Roberto S. Costas-Santos$\,^{\S}$
%and Linus Ge $^{3,\dagger}$*
}

\AuthorNameForHeading{H.~S.~Cohl}
\Address{$^\ast$ Applied and Computational 
Mathematics Division, National Institute of Standards 
and Tech\-no\-lo\-gy, Gaithersburg, MD 20899-8910, USA
%Address of First Author, Country
\URLaddressD{
\href{http://www.nist.gov/itl/math/msg/howard-s-cohl.cfm}
{http://www.nist.gov/itl/math/msg/howard-s-cohl.cfm}
}
} % Address of First Author
\EmailD{howard.cohl@nist.gov} % E-mail address of First Author

\AuthorNameForHeading{H.~S.~Cohl, M.~J.~Schlosser}
\Address{$^\dag$ Fakult\"at f\"ur Mathematik,
Universit\"at Wien,
Oskar-Morgenstern-Platz 1, Vienna,
Austria
%Address of First Author, Country
\URLaddressD{
\href{https://www.mat.univie.ac.at/~schlosse/}
{https://www.mat.univie.ac.at/\midtilde{}schlosse/}
}
} % Address of First Author
\EmailD{michael.schlosser@univie.ac.at} % E-mail address of First Author
%\Address{$^\S$ Departamento de F\'isica y Matem\'{a}ticas,
%Universidad de Alcal\'{a},
%c.p. 28871, Alcal\'{a} de Henares, Spain} 
% Address of First Author
%\URLaddressD{
%\href{http://www.rscosan.com}
%{http://www.rscosan.com}
%}
%\EmailD{rscosa@gmail.com} % E-mail address of First Author

%\Address{$^{\S\S}$ Department of Mathematics,
%University of Rochester, Rochester, NY 14627, USA
%% Address of First Author
%}
%\EmailD{random9483@gmail.com} % E-mail address of First Author

\ArticleDates{Received \today~in final form ????; Published online ????}

\Abstract{Ismail and Wilson derived a generating function for Askey--Wilson polynomials which is given by a product of $q$-Gauss (Heine) nonterminating basic hypergeometric functions. We provide a generalization of that generating function which contains an extra parameter. A special case gives a closed form summation formula for a quadruple basic hypergeometric sum.
We further present new terminating balanced ${}_4\phi_3$ summations that give rise to $q$-quadratic special values for Askey--Wilson polynomials. We also similarly present new terminating 2-balanced and 3-balanced ${}_4\phi_3$ summations.
Using the Ismail--Wilson generating function combined with explicit summations for terminating balanced basic hypergeometric $_4\phi_3$ series,
we compute new basic hypergeometric product transformations for nonterminating basic hypergeometric series and provide corresponding integral representations. Further new identities are obtained by applying
Cayley--Orr type expansion formulas.}

\Keywords{Nonterminating basic hypergeometric series; transformations; products of basic hypergeometric series}
%Please type here List of Keywords for your article separated by semicolon.
% Keywords required only for MST, PB, PMB, PM, JOA, JOB?
% Keywords:

\Classification{33D15; 33D50}
%{??????} % e.g. 35A30; 81Q05
%For 2010 Mathematics Subject Classification see
%http://www.ams.org/mathscinet/msc/msc2010.html

\begin{flushright}
\begin{minipage}{60mm}
\it Dedicated to George E.~Andrews 
and Bruce C.~Berndt on the occasion of their 85\textsuperscript{th} birthdays.
\end{minipage}
\end{flushright}

\section{Introduction}
The Askey--Wilson polynomials stand on the top of the
  $q$-Askey-scheme \cite{Koekoeketal}
as they are the most general family of
orthogonal polynomials in one variable that can be written as
basic hypergeometric series. They are well-studied objects
and play a central role in $q$-series and the theory
of ($q$-)special functions. The monograph \cite{Ismail} discusses
many of the relevant properties of the Askey--Wilson polynomials
and related objects, together with detailed proofs.
In \cite[(1.9)]{IsmailWilson82}, Ismail and Wilson obtained
a generating function for the Askey--Wilson polynomials,
see \eqref{AWgf} below;
this can be used to uniquely determine the coefficients
in the three-term relation satisfied by the Askey--Wilson polynomials
(see \cite[pp.~385--386]{Ismail}).

This paper is devoted to a thorough study of the generating function \eqref{AWgf}, viewed as an expansion of a product of two
nonterminating basic hypergeometric $_2\phi_1$ series in terms of a
single sum of the Askey--Wilson polynomials with coefficients
being basic hypergeometric terms. One of our main results is
a generalization of the Ismail--Wilson generating function in the
form of a product formula for two nonterminating basic
hypergeometric $_3\phi_2$ series.
The expansion here takes the form of a three-fold sum, see
Theorem~\ref{thm:32pf}. An immediate consequence of this formula
is an elegant closed
form evaluation for a specific quadruple basic hypergeometric sum,
given in Corollary~\ref{cor:1.3}, which we believe is new.
In addition, a great portion of our study is devoted to
finding applications of the
generating function \eqref{AWgf} in the context of product formulas for
basic hypergeometric series (which we deduce by applying various
available summations for terminating balanced $_4\phi_3$ series)
and corresponding integral representations.

We believe that our study, which is focused on
formulas connecting (products of) $_2\phi_1$ series and balanced terminating
$_4\phi_3$ series, 
is a valuable contribution to
the theory of basic hypergeometric series. 
These functions are two of the most important families of 
$q$-special functions. They satisfy important properties including difference equations that extend the hypergeometric differential 
equation, and in particular, their appearance in the
theory of basic hypergeometric orthogonal polynomials, cf.~\cite{Ismail}.
We hope that the identities
given here will be useful, for instance in studies related
to combinatorial representation theory, where product
and linearization formulas typically appear.

Before turning to the Askey--Wilson polynomials in Section~\ref{sec:AW},
we recall some basic notions from the theory of basic hypergeometric series
in the following Subsection~\ref{subsec:prel}.
Further material concerns an overview of $_4\phi_3$ summations in
Section~\ref{sec:4phi3}, and, as applications, the
nonterminating product transformations in Section~\ref{sec:npt} and 
a short conclusion is given in the end in Section~\ref{sec:con}.

\subsection{Preliminaries}
\label{subsec:prel}
We adopt the following set 
notations:~$\N_0:=\{0\}\cup\N=\{0, 1, 2, ...\}$, and we use the sets $\Z$, $\R$, $\CC$ which represent 
the integers, real numbers and complex numbers respectively,
{and further write}
$\CCast:=\CC\setminus\{0\}$.
%and $\CCdag:=\CCast\setminus
%\{z\in\CC: |z|=1\}$.
%We also adopt the following notation and conventions.
Throughout the paper, we assume that the empty sum 
vanishes and the 
empty product 
is unity.
%\begin{defn} \label{def:2.1}
We also adopt the following conventions for succinctly 
writing elements of lists. To indicate sequential positive \textit{and} negative 
elements and powers, we write
\[
\ppm a:=\{a,-a\} \qquad\text{and}\qquad z^{\ppm}:=\{ z,z^{-1}\}.
\]
In order to obtain our derived identities, we rely on properties 
of the $q$-Pochhammer symbol (a.k.a.\ the $q$-shifted factorial). 
For any $n\in \N_0$, $q,a \in \CC$, 
the 
$q$-Pochhammer symbol is defined as
\[
(a;q)_n:=(1-a)(1-aq)\cdots(1-aq^{n-1}),%\quad n\in\N_0,
 \qquad\text{and }\qquad (a;q)_\infty:=\lim_{n\to\infty}(a;q)_n,
 \]
 {where in the infinite product we require $|q|<1$ for
absolute convergence}. 
We will also use, for $k\in\N$ and $n\in\N_0\cup\{\infty\}$,
the common notational product convention
\[
(a_1,...,a_k;q)_n:=(a_1;q)_n\cdots(a_k;q)_n.
\]
The $q$-Pochhammer symbol satisfies a number of elementary
identities (cf.\ \cite[Appendix~I]{GaspRah}) such as
\begin{equation}
(a^2;q^2)_n=(\ppm a;q)_n,
\label{sqPoch}
\end{equation}
which we tacitly use throughout this paper.
Some other useful identities for the $q$-shifted factorial for $n\in\N_0$, are
\begin{eqnarray*}
&&\hspace{-6.5cm}(a;q)_{n+1}=(1-a)(qa;q)_n,\\
&&\hspace{-6.5cm}(a;q)_{n-1}=\frac{(q^{-1}a;q)_n}{(1-q^{-1}a)},\\
&&\hspace{-6.5cm}\frac{(1-q^na)}{(1-a)}=\frac{(qa;q)_n}{(a;q)_n},\\
&&\hspace{-6.5cm}(a;q^{\frac12})_n=\left\{ \begin{array}{ll}
\displaystyle (a,q^\frac12a;q)_{\frac{n}{2}} & \qquad\mathrm{if}\ n\ \mathrm{even},\\[9pt]
\displaystyle 
(1-a)(q^{\frac12}a,qa;q)_{\frac{n-1}{2}} 
& \qquad\mathrm{if}\ n\ \mathrm{odd},
\end{array} \right.\\[9pt]
&&\hspace{-6.5cm}(a;q)_{2n}=(a,qa;q^2)_n=(\ppm \sqrt{a},\ppm\sqrt{qa};q)_n.
\end{eqnarray*}
The basic hypergeometric series, which we 
will often use, is defined for
$q,z\in\CCast$, such that $|q|<1$, $s,r\in\N_0$, 
$b_j\not\in q^{-\mathbb N_0}$, 
$j=1,...,s$, as
\cite[(1.10.1)]{Koekoeketal}
\begin{equation*}
\qhyp{r}{s}{a_1,...,a_r}
{b_1,...,b_s}
{q,z}
:=\sum_{k=0}^\infty
\frac{(a_1,...,a_r;q)_k}
{(q,b_1,...,b_s;q)_k}
\left((-1)^kq^{\binom k2}\right)^{1+s-r}
z^k,
\end{equation*}
where for $r=s+1$ we assume $|z|<1$ for convergence, if the series
does not terminate. (For $r\le s$ the series converges for any $z$,
due to the quadratic powers of $q$ appearing in the terms of the series.)
%We refer to a basic hypergeometric series as balanced 
%(or Saalsch\"utzian) if $r=s+1$ and $q a_1\cdots a_r=b_1\cdots b_s$.
We refer to a basic hypergeometric
series as {\it $\ell$-balanced} if
$q^\ell a_1\cdots a_r=b_1\cdots b_s$, 
and {\it balanced} (or Saalsch\"utzian) if $\ell=1$.

The basic hypergeometric function $_r\phi_s$ is a $q$-extension of the
generalized hypergeometric function ${}_rF_s$, defined for
$z,a_1,\ldots,a_r\in\C$ and $b_1,\ldots,b_s\in\C\setminus\{0,-1,-2,\ldots\}$ as
\cite[\href{http://dlmf.nist.gov/16.2.E1}{(16.2.1)}]{NIST:DLMF}
\begin{equation*}
\hyp{r}{s}{a_1,\ldots,a_r}{b_1,\ldots,b_s}{z}:=
\sum_{k=0}^\infty\frac{(a_1)_k\cdots(a_r)_k}{(b_1)_k\cdots(b_s)_k}
\frac{z^k}{k!},
\end{equation*}
where
$(a)_k=a(a+1)\cdots(a+k-1)$
is the shifted factorial. For the conditions of absolute or conditional
convergence of the ${}_rF_s$ series, in case it does not terminate,
see \cite[\href{http://dlmf.nist.gov/16.2}{Section 16.2}]{NIST:DLMF}.

\section{The Askey--Wilson polynomials}\label{sec:AW}
%%%%%%%%%%%%%%%%%%%%%%%%%%%%%%%%%%%%%%%%%%%%%%%%%%%%%%%%%%%%%%%%
The Askey--Wilson polynomials~\cite[Chapter~15]{Ismail} form the most general
family of orthogonal polynomials in $x$ that are also basic hypergeometric
series. They contain, in addition to the variable $x$ and the base $q$,
four extra parameters $a,b,c,d$. The $n$-th degree Askey--Wilson
polynomial is denoted by $p_n(x;a,b,c,d|q)$.
These polynomials are symmetric in the four
variables $a,b,c,d$ and have the following
terminating balanced ${}_4\phi_3$ basic hypergeometric series
representations
\cite[(15)]{CohlCostasSantos20b}
(with obvious additional ones corresponding to the symmetries
in the four variables), see \cite[Theorem~7]{CohlCostasSantos20b}.
\begin{thm}
\label{AWthm}
Let $n\in\N_0$, $x=\frac12(w+w^{-1})$,
and $q,w,a,b,c,d\in\CCast$. 
Then
\begin{eqnarray}
\label{aw:def1} &&\hspace{-1.2cm}p_n(x;a,b,c,d|q) {}= a^{-n} (ab,ac,ad;q)_n 
\qhyp43{q^{-n},q^{n-1}abcd, aw^{\ppm}}{ab,ac,ad}{q,q}\\
\label{aw:def2}
 &&\hspace{1.45cm}{}=q^{-\binom{n}{2}} (-a)^{-n} 
\frac{(\frac{abcd}{q};q)_{2n}
(a w^{\ppm};q)_n}
{(\frac{abcd}{q};q)_n}
\qhyp43{q^{-n},
\frac{q^{1-n}}{ab},
\frac{q^{1-n}}{ac},
\frac{q^{1-n}}{ad}
%\left\{\frac{q^{1-n}}{a_{ps}}\right\}_{s\neq p}
}
{\frac{q^{2-2n}}{abcd},\frac{q^{1-n}}{a}w^{\ppm}}{q,q}\\
\label{aw:def3} && \hspace{1.45cm}{}=w^n\Big(ab,\frac cw,\frac dw;q\Big)_n
\qhyp43{q^{-n},aw,bw,\frac{q^{1-n}}{cd}}
{ab,q^{1-n}\frac{w}c,q^{1-n}\frac{w}d}{q,q}.
\end{eqnarray}
\end{thm}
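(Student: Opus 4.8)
The plan is to take \eqref{aw:def1} as the defining expression for the Askey--Wilson polynomials, in agreement with \cite{Koekoeketal} and \cite{Ismail}, and to derive \eqref{aw:def2} and \eqref{aw:def3} from it by standard manipulations of terminating balanced ${}_4\phi_3$ series, as recorded in \cite{CohlCostasSantos20b}. The first thing I would record is that the series in \eqref{aw:def1} is balanced: multiplying the numerator parameters by $q$ gives $q\cdot q^{-n}\cdot q^{n-1}abcd\cdot aw\cdot aw^{-1}=a^3bcd$, which equals the product $ab\cdot ac\cdot ad$ of the denominator parameters. Since the factor $q^{-n}$ forces termination, both series reversal and the Sears transformations for balanced ${}_4\phi_3$ series are available, and these supply \eqref{aw:def2} and \eqref{aw:def3} respectively.

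To obtain \eqref{aw:def2} I would reverse the order of summation in \eqref{aw:def1}, writing the finite sum as its $k=n$ term times a new series in the reversed index $n-k$. Multiplying that $k=n$ term by the original prefactor $a^{-n}(ab,ac,ad;q)_n$ cancels $(ab,ac,ad;q)_n$ against the summand denominators, and the evaluation $(q^{-n};q)_n=(-1)^nq^{-\binom{n+1}2}(q;q)_n$ together with $n-\binom{n+1}2=-\binom n2$ collapses the remainder into $(-a)^{-n}q^{-\binom n2}(q^{n-1}abcd;q)_n(aw^{\pm};q)_n$. The doubling identity $(abcd/q;q)_{2n}=(abcd/q;q)_n(q^{n-1}abcd;q)_n$ then rewrites $(q^{n-1}abcd;q)_n$ as the displayed quotient, reproducing exactly the prefactor of \eqref{aw:def2}. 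The reversed running ratio, simplified with the reflection identity $(a;q)_{n-k}=(a;q)_n(q^{1-n}/a;q)_k^{-1}(-q/a)^kq^{\binom k2-nk}$ (cf.\ \cite[Appendix~I]{GaspRah}), assembles into the ${}_4\phi_3$ with numerator $q^{-n},q^{1-n}/ab,q^{1-n}/ac,q^{1-n}/ad$ and denominator $q^{2-2n}/abcd,(q^{1-n}/a)w^{\pm}$; checking that this series is again balanced gives a useful consistency test.

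To obtain \eqref{aw:def3} I would apply Sears' transformation for a terminating balanced ${}_4\phi_3$ \cite[Appendix~III]{GaspRah} to \eqref{aw:def1}, with the parameter identification that keeps $aw$ as the preserved numerator parameter and $ab$ as a retained denominator parameter, while $q^{n-1}abcd$ and $aw^{-1}$ are the two parameters fed into the transform. Under this choice the new numerator parameters become $aw$, $ab/(q^{n-1}abcd)=q^{1-n}/cd$, and $ab/(aw^{-1})=bw$, and the new denominator parameters become $ab$, $aw\,q^{1-n}/(ac)=q^{1-n}w/c$, and $aw\,q^{1-n}/(ad)=q^{1-n}w/d$, which are precisely those of \eqref{aw:def3}. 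The Sears prefactor $c^n(e/c,f/c;q)_n/(e,f;q)_n$ specializes here (with $c=aw$, $e=ac$, $f=ad$) to $a^nw^n(cw^{-1},dw^{-1};q)_n/(ac,ad;q)_n$, and multiplying by the original $a^{-n}(ab,ac,ad;q)_n$ cancels $(ac,ad;q)_n$ and leaves exactly $w^n(ab,cw^{-1},dw^{-1};q)_n$, as required.

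The routine parts are the reversal in the second step and the substitution in the third; neither presents a genuine analytic difficulty. The main obstacle is the careful bookkeeping of the scalar prefactors, and in particular selecting, among the several equivalent Sears transformations, the one whose preserved parameters are $aw$ and $ab$, so that the transform produces the asymmetric pairing of $\{a,b\}$ against $\{c,d\}$ exhibited in \eqref{aw:def3}; once this choice is fixed the prefactors telescope cleanly. I would confirm each derived identity by checking agreement of both sides for small $n$ (say $n=0,1,2$) and by verifying that the balance condition is preserved throughout. Finally, the symmetry of $p_n(x;a,b,c,d|q)$ in $a,b,c,d$ yields the further equivalent representations alluded to in the statement.
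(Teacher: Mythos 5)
Your proposal is correct: the balance check, the series reversal yielding \eqref{aw:def2} (including the prefactor bookkeeping via $(q^{-n};q)_n=(-1)^nq^{-\binom{n+1}{2}}(q;q)_n$ and $(abcd/q;q)_{2n}=(abcd/q;q)_n(q^{n-1}abcd;q)_n$), and the Sears transformation with preserved parameters $aw$ and $ab$ yielding \eqref{aw:def3} all verify exactly. The paper itself gives no self-contained argument — its proof is a citation to \cite[Theorem~7]{CohlCostasSantos20b} — and your reversal-plus-Sears derivation is precisely the standard argument underlying that reference, so your route is essentially the intended one, with the added benefit of being complete on the page.
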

%\begin{proof}
%See the proof of \cite[Theorem~7]%{CohlCostasSantos20b}.
%\end{proof}

An important product generating function for Askey--Wilson polynomials was derived by Ismail--Wilson, namely \cite[(1.9)]{IsmailWilson82}
\begin{eqnarray}
&&\hspace{-3.5cm}\sum_{n=0}^\infty
\frac{t^n\,p_n(x;a,b,c,d|q)}{(q,ab,cd;q)_n}=\qhyp21{aw,bw}{ab}{q,\frac{t}{w}}\qhyp21{\frac{c}{w},\frac{d}{w}}{cd}{q,tw},
\label{AWgf}
\end{eqnarray}
where $x=\frac12(w+w^{-1})$.
Comparison of the coefficients of $t^n/(q,ab,cd;q)_n$
on both sides of \eqref{AWgf} reveals that
the Askey--Wilson polynomials satisfy the convolution formula
\begin{equation}
p_n(x;a,b,c,d|q)=(q,ab,cd;q)_n\sum_{j=0}^n\frac{(aw,bw;q)_j}{(q,ab;q)_j}
\frac{(\frac cw,\frac dw;q)_{n-j}}{(q,cd;q)_{n-j}}w^{n-2j},
\label{AWcf}
\end{equation}
which, after some elementary manipulations of $q$-Pochhammer symbols
is just \eqref{aw:def3}. (This shows that \eqref{AWcf} and \eqref{aw:def3} are equivalent.) 

\begin{rem}
A referee pointed out that \eqref{AWcf} with $w=\expe^{i\theta}$ yields for $a=b=c=d=0$ the well-known formula for continuous $q$-Hermite polynomials given at the end of \cite[Section 14.26]{Koekoeketal}. This is interesting because, for instance in the standard $q$-hypergeometric expression for the Askey--Wilson polynomials \eqref{aw:def1}, one cannot specialize $a=b=c=d=0$.
\end{rem}

\subsection{A triple sum generating function for Askey--Wilson polynomials}

We have the following triple summation generating function for
Askey--Wilson polynomials
which generalizes the Ismail--Wilson product generating function \eqref{AWgf}
{by an extra parameter $u$}.
\begin{thm}
\label{thm:32pf}
Let {$|q|<1$ and} $u,w,t,a,b,c,d\in\CCast$ such that
{$|u|<\min(|a|,|c|)$,} $|t|<\min(|w|,|w^{-1}|)$, $x=\frac12(w+w^{-1})$. Then
\begin{eqnarray}\label{32pf}
&&\hspace{-2.5cm}\qhyp32{\frac{u}{t},aw,bw}{ab,uw}{q,\frac{t}{w}}
\qhyp32{\frac{u}{t},\frac cw,\frac dw}{cd,\frac uw}{q,tw}\nonumber\\
&&\hspace{-0.3cm}=\frac{(\frac{u}{a},\frac{u}{c};q)_\infty}{(uw^{\ppm};q)_\infty}
\sum_{n,k,l\ge 0}\frac{t^nu^{k+l}}{a^kc^l}
\frac{(aw^{\ppm};q)_k(cw^{\ppm};q)_l\,p_n(x;q^ka,b,q^lc,d|q)}{(q;q)_k(q;q)_l(q;q)_n(ab;q)_{n+k}(cd;q)_{n+l}}.
\end{eqnarray}
\end{thm}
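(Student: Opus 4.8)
The plan is to start from the triple sum on the right of \eqref{32pf} and collapse it to the product of two $_3\phi_2$'s, the two essential ingredients being the Ismail--Wilson generating function \eqref{AWgf} itself (with shifted parameters) and one standard $_2\phi_1$-to-$_2\phi_2$ transformation. First I would insert the convolution representation \eqref{AWcf} of the shifted polynomial $p_n(x;q^ka,b,q^lc,d|q)$ into the right-hand side. Using $(ab;q)_{n+k}=(ab;q)_k\,(q^kab;q)_n$ and $(cd;q)_{n+l}=(cd;q)_l\,(q^lcd;q)_n$, the factors $(q^kab;q)_n$ and $(q^lcd;q)_n$ coming from \eqref{AWcf} cancel, and the remaining sum over $n$ is precisely \eqref{AWgf} with $a\mapsto q^ka$ and $c\mapsto q^lc$. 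This decouples everything: the right-hand side of \eqref{32pf} becomes $\frac{(u/a,u/c;q)_\infty}{(uw^\pm;q)_\infty}\,A\,C$, where
\[
A=\sum_{k\ge0}\frac{(u/a)^k(aw^\pm;q)_k}{(q,ab;q)_k}\,\qhyp21{q^kaw,bw}{q^kab}{q,\frac tw},
\]
and $C$ is the image of $A$ under $a\mapsto c$, $b\mapsto d$, $w\mapsto w^{-1}$.

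It then remains to prove the single ``parameter-augmentation'' identity $\frac{(u/a;q)_\infty}{(uw;q)_\infty}\,A=\qhyp32{u/t,aw,bw}{ab,uw}{q,\frac tw}$ together with its mirror image $\frac{(u/c;q)_\infty}{(u/w;q)_\infty}\,C=\qhyp32{u/t,c/w,d/w}{cd,u/w}{q,tw}$; multiplying these and absorbing the prefactor then yields the left-hand side of \eqref{32pf}. I would prove the first identity by comparing Taylor coefficients in $t$. Expanding $(u/t;q)_j(t/w)^j$ by the $q$-binomial theorem turns the $_3\phi_2$ into a genuine power series in $t$ whose coefficient of $t^j$ equals $\frac{(aw,bw;q)_j}{(q,ab,uw;q)_j}w^{-j}\,\qhyp22{q^jaw,q^jbw}{q^jab,q^juw}{q,\frac uw}$, while carrying out the inner $k$-summation in $A$ for fixed $j$ gives the coefficient of $t^j$ as $\frac{(aw,bw;q)_j}{(q,ab;q)_j}w^{-j}\,\qhyp21{a/w,q^jaw}{q^jab}{q,\frac ua}$. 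After cancelling the common factors, equality of the two coefficients is, for each $j$, exactly the transformation
\[
\qhyp21{\alpha,\beta}{\gamma}{q,\zeta}=\frac{(\alpha\zeta;q)_\infty}{(\zeta;q)_\infty}\,\qhyp22{\alpha,\gamma/\beta}{\gamma,\alpha\zeta}{q,\beta\zeta}
\]
(a known $_2\phi_1$-to-$_2\phi_2$ transformation; see, e.g., \cite{GaspRah}), applied with $\alpha=q^jaw$, $\beta=a/w$, $\gamma=q^jab$, $\zeta=u/a$.

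I expect the augmentation step to be the main obstacle. The point that needs care is that, although the $_3\phi_2$ carries the parameter $u/t$, which visibly couples $u$ and $t$, it is nonetheless analytic in $t$ near $t=0$, and its $t$-expansion must be matched termwise against that of the manifestly $t$-analytic series $A$; recognising the resulting per-$j$ identity as the $_2\phi_1$-to-$_2\phi_2$ transformation in the correct guise is the crux. The remaining manipulations are routine but must be handled with attention: the regrouping of $q$-shifted factorials via $(aw^\pm;q)_k(q^kaw;q)_j=(aw;q)_{k+j}(a/w;q)_k$ (and likewise $(ab;q)_k(q^kab;q)_j=(ab;q)_{k+j}$) and the interchange of the $k$- and $j$-summations are justified by absolute convergence in the stated ranges, and the hypotheses $|u|<\min(|a|,|c|)$ and $|t|<\min(|w|,|w^{-1}|)$ are exactly what guarantee convergence of every series involved (so that $|u/a|,|u/c|<1$ and $|t/w|,|tw|<1$) and the legitimacy of all rearrangements.
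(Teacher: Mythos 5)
Your proposal is correct and is essentially the paper's own proof run in the opposite direction: both arguments rest on exactly the same three ingredients --- the terminating $q$-binomial theorem to expand $(u/t;q)_m t^m$ into powers of $t$, Jackson's transformation \eqref{eq:J22} (your displayed ${}_2\phi_1$-to-${}_2\phi_2$ transformation is precisely \eqref{eq:J22} rearranged, and you even apply it with the identical specialization $(q^jaw,a/w,q^jab,u/a)$ that the paper uses), and the Ismail--Wilson convolution \eqref{AWcf} (equivalently, the generating function \eqref{AWgf}). Where the paper compares coefficients of $t^n$ on the product of the two ${}_3\phi_2$ series and recognizes the inner $j$-sum as an Askey--Wilson polynomial via \eqref{AWcf}, you start from the triple sum, decouple it into $A\cdot C$ by invoking \eqref{AWgf} with parameters shifted by $q^k$ and $q^l$, and then verify the per-factor ``augmentation'' identity by the identical $t^j$-coefficient computation --- a tidy repackaging that isolates a clean standalone lemma, but not a different method.
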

\begin{proof}
The main idea is to establish the identity by comparison of the
coefficients of $t^n$ on both sides of the identity.
Since
\[
\Big(\frac ut;q\Big)_kt^k=\sum_{j=0}^k(-1)^{k-j}q^{\binom{k-j}2}
\frac{(q;q)_k}{(q;q)_j(q;q)_{k-j}}u^{k-j}t^j,
\]
by the terminating $q$-binomial theorem \cite[Ex.~1.2(vi)]{GaspRah}, we see
that the first $_3\phi_2$ series on the left-hand side of \eqref{32pf} is
\begin{align*}
\qhyp32{\frac{u}{t},aw,bw}{ab,uw}{q,\frac{t}{w}}&=
\sum_{k=0}^\infty\frac{(aw,bw;q)_k}{(q,ab,uw;q)_k}w^{-k}
\sum_{j=0}^k(-1)^{k-j}q^{\binom{k-j}2}
\frac{(q;q)_k}{(q;q)_j(q;q)_{k-j}}u^{k-j}t^j\\
 &=\sum_{j=0}^\infty\frac{(aw,bw;q)_j}{(q,ab,uw;q)_j}w^{-j}t^j
\sum_{k=0}^\infty
 \frac{(q^jaw,q^jbw;q)_k}{(q,q^jab,q^juw;q)_k}
 (-1)^kq^{\binom k2}w^{-k}u^k. 
\end{align*}
Now, the inner sum over $k$ is a $_2\phi_2$ series which can be transformed
to a multiple of a $_2\phi_1$ series by Jackson's
transformation in \cite[(III.4)]{GaspRah}, which
we restate for convenience in the following form:
\begin{equation}\label{eq:J22}
\qhyp22{a,\frac cb}{c,az}{q,bz}=
\frac{(z;q)_\infty}{(az;q)_\infty}\qhyp21{a,b}{c}{q,z},
\end{equation}
valid for $|z|<1$ for convergence of the series on the right-hand side.
We thus apply the $(a,b,c,z)\mapsto{(awq^j,a/w,abq^j,u/a)}$ case
of \eqref{eq:J22} and obtain for the first $_3\phi_2$ series
on the left-hand side of \eqref{32pf} 
\begin{equation*}
\qhyp32{\frac{u}{t},aw,bw}{ab,uw}{q,\frac{t}{w}}
=\sum_{j=0}^\infty\frac{(aw,bw;q)_j}{(q,ab,uw;q)_j}w^{-j}t^j
\frac{(\frac ua;q)_\infty}{(q^juw;q)_\infty}
\sum_{k=0}^\infty
\frac{(q^jaw,\frac aw;q)_k}{(q,q^jab;q)_k}a^{-k}u^k.
\end{equation*}
Thus, the coefficient of $t^j$ of the first $_3\phi_2$ series
on the left-hand side of \eqref{32pf} is
\[
\frac{(\frac ua;q)_\infty}{(uw;q)_\infty}\frac{(aw,bw;q)_j}{(q,ab;q)_j}w^{-j}
\sum_{k=0}^\infty\frac{(q^jaw,\frac aw;q)_k}
{(q,q^jab;q)_k}a^{-k}u^k.
\]
Similarly, by symmetry, the coefficient of $t^{n-j}$ of the second
$_3\phi_2$ series on the left-hand side of \eqref{32pf} is
\[
\frac{(\frac uc;q)_\infty}{(\frac uw;q)_\infty}
\frac{(\frac cw,\frac dw;q)_{n-j}}{(q,cd;q)_{n-j}}w^{n-j}
\sum_{l=0}^\infty\frac{(q^{n-j}\frac{c}w,cw;q)_l}
{(q,q^{n-j}cd;q)_l}c^{-l}u^l.
\]
In total, the coefficient of $t^n$ of the product of the two
$_3\phi_2$ series on the left-hand side of \eqref{32pf} is
\begin{eqnarray*}
&&\hspace{-0.5cm}\frac{(\frac ua,\frac uc;q)_\infty}{(uw,\frac uw;q)_\infty}\sum_{j=0}^n
\frac{(aw,bw;q)_j}{(q,ab;q)_j}
\frac{(\frac cw,\frac dw;q)_{n-j}}{(q,cd;q)_{n-j}}w^{n-2j}
%\\&&\times
\sum_{k,l=0}^\infty\frac{(q^jaw,\frac aw;q)_k}
{(q,q^jab;q)_k}\frac{(q^{n-j}\frac {c}w,cw;q)_l}
{(q,q^{n-j}cd;q)_l}\frac{u^{k+l}}{a^kc^l}\\
&&=\frac{(\frac ua,\frac uc;q)_\infty}{(uw,\frac uw;q)_\infty}
\sum_{k,l=0}^\infty\frac{(aw,\frac aw;q)_k}
{(q,ab;q)_k}\frac{(cw,\frac cw;q)_l}
{(q,cd;q)_l}\frac{u^{k+l}}{a^kc^l}
%\\&&\quad\;\times
\sum_{j=0}^n\frac{(q^kaw,bw;q)_j}{(q,q^kab;q)_j}
\frac{(q^l\frac{c}w,\frac dw;q)_{n-j}}{(q,q^lcd;q)_{n-j}}w^{n-2j}.
\end{eqnarray*} 
Now the last sum over $j$ is a multiple of an Askey--Wilson polynomial
by virtue of \eqref{AWcf}. We thus obtain the following expression
for the coefficient of $t^n$ of the left-hand side of \eqref{32pf}:
\begin{equation*}
\frac{(\frac ua,\frac uc;q)_\infty}{(uw,\frac uw;q)_\infty}
\sum_{k,l=0}^\infty\frac{(aw,\frac aw;q)_k}
{(q,ab;q)_k}\frac{(cw,\frac cw;q)_l}
{(q,cd;q)_l}\frac{u^{k+l}}{a^kc^l}
\frac{p_n(x;q^ka,b,q^lc,d|q)}{(q,q^kab,q^lcd;q)_n},
\end{equation*}
and the result follows.
\end{proof}

\noindent An immediate relevant consequence of Theorem \ref{thm:32pf} is
obtained by letting $u=t$, in which case both ${}_3\phi_2$ basic hypergeometric series reduce to unity. This gives the following closed form summation formula for
a triple sum; or, after writing the Askey--Wilson polynomial as a sum,
say, using \eqref{AWcf}, for a basic hypergeometric \textit{quadruple} sum:
\begin{cor}
\label{cor:1.3}
Let $|q|<1$ and $w,t,a,b\in\CCast$ such that
$|t|<\min(|a|,|c|,|w|,|w^{-1}|)$, $x=\frac12(w+w^{-1})$. Then
\begin{equation}
\sum_{n,k,l\ge 0}\frac{t^{n+k+l}}{a^kc^l}
\frac{(aw^{\ppm};q)_k(cw^{\ppm};q)_l\,p_n(x;q^ka,b,q^lc,d|q)}
{(q;q)_k(q;q)_l(q;q)_n(ab;q)_{n+k}(cd;q)_{n+l}}
=\frac{(tw^{\ppm};q)_\infty}{(\frac{t}{a},\frac{t}{c};q)_\infty}.
\label{eqcor:1.3}
\end{equation}
\end{cor}
\begin{proof}
Simply setting $u=t$ in Theorem \ref{thm:32pf} produces the result.
\end{proof}

\begin{rem}
Another interesting consequence of Theorem \ref{thm:32pf} is obtained by
letting $w=d$ (equivalently $w=b$). In this case one of the ${}_3\phi_2$ basic hypergeometric series becomes unity. Further, in this case the Askey--Wilson polynomial simplifies using \cite[(130)]{KoornwinderKLSadd}
\begin{equation}\label{aw32}
p_n\big(\textstyle{\frac 12}(d+d^{-1});a,b,c,d|q\big)=d^{-n}(ad,bd,cd;q)_n.
\end{equation}
After this reduction, the sum over $l$ in \eqref{32pf} is summable
using the nonterminating
$q$-binomial theorem (cf.\ \cite[(II.3)]{GaspRah})
and the triple sum reduces to a double sum.
The final result is given by
\begin{eqnarray*}
&&\hspace{-2.5cm}\qhyp32{\frac{u}{t},ad,bd}{ab,du}{q,\frac{t}{d}}=\frac{(\frac{u}{a};q)_\infty}{(du;q)_\infty}
\sum_{n=0}^\infty\sum_{k=0}^\infty \frac{(\frac{a}{d};q)_k(bd;q)_n(ad;q)_{n+k}}{(q;q)_k(q;q)_n(ab;q)_{n+k}}
\left(\frac{t}{d}\right)^n\left(\frac{u}{a}\right)^k,
\end{eqnarray*}
which can be seen to be true since the right-hand side is a multiple of a $q$-Appell $\Phi^{(1)}$ function \cite[\href{http://dlmf.nist.gov/17.4.E5}{(17.4.5)}]{NIST:DLMF} which can be reduced to a ${}_3\phi_2$ using \cite[\href{http://dlmf.nist.gov/17.11.E1}{(17.11.1)}]{NIST:DLMF}.
\end{rem}

%%%We give a couple of immediate consequences of Theorem~\ref{thm:32pf}.

\section{Overview and revisitation of terminating $k$-balanced ${}_4\phi_3$ summations}\label{sec:4phi3}

Askey--Wilson polynomials are given in 
terms of a terminating balanced ${}_4\phi_3$. We would like to exploit the Ismail--Wilson generating function for Askey--Wilson polynomials
\eqref{AWgf} to obtain new product formulas for nonterminating basic hypergeometric series. (The triple summation in Theorem~\ref{thm:32pf}, while of
independent interest and having led to the discovery of Corollary~\ref{cor:1.3}, appears
to be not suitable for our purpose, due to the shifts of the parameters $a$
and $c$ of the Askey--Wilson polynomial that appears in the summand.)
There {exist} several well-known terminating ${}_4\phi_3$ summations. Some of them manifestly involve balanced summations
and others may be transformed to a balanced ${}_4\phi_3$ series
{and, as such, represent summations for Askey--Wilson polynomials which
can be applied}.
In the following, we inspect various known ${}_4\phi_3$ summation formulas to determine if they may be exploited as described above.

The summation formula \cite[\href{http://dlmf.nist.gov/17.7.E7}{(17.7.7)}]{NIST:DLMF}
\begin{equation*}
\qhyp43{a,-q\sqrt{a},b,c}{-\sqrt{a},\frac{qa}{b},\frac{qa}{c}}{q,\frac{q\sqrt{a}}{bc}}=
\frac{(qa,\frac{q\sqrt{a}}{b},\frac{q\sqrt{a}}{c},\frac{qa}{bc};q)_\infty}{(q\sqrt{a},\frac{qa}{b},\frac{qa}{c},\frac{q\sqrt{a}}{bc};q)_\infty},
\end{equation*} 
valid for $|q|<1$ and $|q\sqrt{a}|<|bc|$, concerns a summation for a nonterminating well-poised ${}_4\phi_3$ series (cf.\ \cite{GaspRah} for the notion of a well-poised series), but not one for a balanced ${}_4\phi_3$ series.
However, {the ${}_4\phi_3$ series} may be transformed to a terminating balanced sum by letting $c=q^{-n}$ and $b=q^n\sqrt{a}$. Unfortunately, in this case the formula vanishes for all $n\ge 1$ and is unity for $n=0$; so
the use of this summation will not lead to an interesting product formula {when} applied to the Ismail--Wilson generating function for Askey--Wilson polynomials \eqref{AWgf}.

In \cite{Andrews1976}, Andrews (1976) proved the following
balanced ${}_4\phi_3$ summation, hereby giving a
$q$-analogue of the terminating version of Watson's ${}_3F_2(1)$ summation \cite[\href{http://dlmf.nist.gov/16.4.E6}{(16.4.6)}]{NIST:DLMF}:
\begin{equation}\label{eq:Andrews}
\qhyp43{a,b,\ppm \sqrt{c}}{\ppm\sqrt{qab},c}{q,q}=a^{\frac n2}\frac{(qa,qb,\frac{qc}{a},\frac{qc}{b};q^2)_{\infty}}{(q,qab,qc,\frac{qc}{ab};q^2)_{\infty}},
\end{equation}
provided $b=q^{-n}$ and $n$ is a non-negative integer.
Replacing the variable $a$ by $q^na$ and written in explicit terms,
this formula is
\begin{equation}
\qhyp43{q^{-n},q^na,\ppm \sqrt{c}}{\ppm\sqrt{qa},c}{q,q}=
\begin{cases}
\frac{c^{\frac n2}(q,\frac{qa}{c};q^2)_{\frac n2}}{(qa,qc;q^2)_{\frac n2}}&\text{if $n$ even},\\
0&\text{if $n$ odd}.\end{cases}
\label{Andrews2}
\end{equation}
In \cite{Schlosser2018}, \eqref{Andrews2} was used to prove the product formula
\cite[Theorem~3]{Schlosser2018}
\begin{equation*}
\qhyp21{\ppm a}{a^2}{q,t}\qhyp21{\ppm b}{b^2}{q,-t}=\qhyp43{\ppm ab,\ppm qab}{qa^2,qb^2,a^2b^2}{q^2,t^2},
\end{equation*}
valid for $|q|<1$ and $|t|<1$.
By a standard polynomial argument, Andrews' summation in \eqref{Andrews2} is equivalent to the following summation found
by Jain~\cite{Jain1981}:
\begin{equation}\label{eq:Jain}
\qhyp43{\ppm q^{-n},a,b}{q^{-2n},\ppm\sqrt{qab}}{q,q}=\frac{(qa,qb;q^2)_n}{(q,qab;q^2)_n}.
\end{equation}
In fact, \eqref{eq:Andrews} holds whenever the series terminates;
i.e., instead of $b=q^{-n}$ we can take $c=q^{-2n}$, which yields \eqref{eq:Jain}. Now, while \eqref{eq:Jain} concerns a balanced $_4\phi_3$ summation, it cannot be written as an $n$-th order Askey--Wilson polynomial
(using any of the representations in Theorem~\ref{AWthm})
with parameters independent from $n$.

The very-well-poised ${}_8W_7$ appearing in the summation
\cite[\href{http://dlmf.nist.gov/17.7.E8}{(17.7.8)}]{NIST:DLMF}
(which is a Gasper and Rahman's $q$-analogue of Watson's $_3F_2$ sum) can be transformed to a sum of two balanced ${}_4\phi_3$'s using Bailey's transformation \cite[\href{http://dlmf.nist.gov/17.9.E16}{(17.9.16)}]{NIST:DLMF}.  In most choices of parameters which give a terminating representation for the ${}_8W_7$ one does not obtain a meaningful result as a single balanced ${}_4\phi_3$. However, one of {choices} leads to a balanced ${}_4\phi_3$ summation formula, namely
\begin{eqnarray}
&&\hspace{-0.7cm}\qhyp43{q^{-2n},c,-\frac{q^{1-n}}{b},\frac{q^{1-n}b}{c}}{\frac{q^{2-2n}}{c},-q^{1-n}b,\frac{q^{1-n}c}{b}}{q^2,q^2}
%\nonumber\\
%&&\hspace{0.0cm}
=
\frac{(q^{1-n}b,c^2,q^{2n}c,
%q^{1+n}bc,q^{1-n}bc,
q^{1+n}\frac{c}{b};q^2)_\infty(q^{2-2n},q^2b^2,q^{2n+2}c^2,\frac{q^2c^2}{b^2};q^4)_\infty}
{(q^{n+1}b,c,q^{2n}c^2,
%q^{1-n}bc,q^{1+n}bc,
q^{1-n}\frac{c}{b}
;q^2)_\infty(q^2,q^{2-2n}b^2,q^2c^2,q^{2n+2}\frac{c^2}{b^2};q^4)_\infty},
\label{newsum4}
\end{eqnarray}
which vanishes for $n$ odd. From
\eqref{newsum4} one obtains, after the substitution
$(b,c)\mapsto(-q^{1-n}/b,a)$, followed by replacing $q^2$ by $q$,
and some simplification, the following summation:
\begin{equation}
\qhyp43{q^{-n},-\frac{q^{1-n}}{ab},a,b}{-ab,\frac{q^{1-n}}{a},\frac{q^{1-n}}{b}}{q,q}=
\begin{cases}
\frac{(q,a^2,b^2;q^2)_{\frac n2}\,(ab;q)_{n}}{(a,b;q)_{n}(a^2b^2;q^2)_{\frac n2}}&
\text{if $n$ even,}\\
0&\text{if $n$ odd.}
\end{cases}
\label{newsum3}
\end{equation}
The summation formula \eqref{newsum3} is amenable to replacements in the Askey--Wilson polynomials using the terminating basic hypergeometric series representation \eqref{aw:def3}. This summation of a balanced terminating ${}_4\phi_3$ was first derived by Bailey (1941) \cite{Bailey1941} and is a consequence of a formula due to Jackson (1941) \cite{Jackson1941} (see also Carlitz (1969) \cite{Carlitz1969}, and \cite[Exercise 2.6]{GaspRah}).
Note that \eqref{newsum3} is equivalent to \eqref{Andrews2} (and \eqref{eq:Jain}), which can be seen by replacing $(a,c)\mapsto(a^2b^2/q,a^2)$ in \eqref{Andrews2} and then applying Sears' transformation 
\cite[\href{http://dlmf.nist.gov/17.9.E14}{(17.9.14)}]{NIST:DLMF}.
Therefore, it is seen that Andrews' $q$-analogue \eqref{Andrews2} of the terminating version of Watson's ${}_3F_2(1)$ summation \cite[\href{http://dlmf.nist.gov/16.4.E6}{(16.4.6)}]{NIST:DLMF} is a special case of Gasper--Rahman's nonterminating $q$-analogue of Watson's ${}_3F_2$ sum \cite[\href{http://dlmf.nist.gov/17.7.E8}{(17.7.8)}]{NIST:DLMF}.

The summation formula \cite[\href{http://dlmf.nist.gov/17.7.E10}{(17.7.10)}]{NIST:DLMF}
\begin{equation*}
\Whyp87{-c}{a,\frac{q}{a},c,-d,-\frac{q}{d}}{q,c}=\frac{(-c,-qc;q)_\infty(acd,\frac{qac}{d},\frac{qcd}{a},\frac{q^2c}{ad};q^2)_\infty}{(cd,\frac{qc}{d},-ac,-\frac{qc}{a};q)_\infty},
\end{equation*}
{where $|q|<1$ and $|c|<1$,}
Gasper {and} Rahman's $q$-analogue of Whipple's ${}_3F_2(1)$ summation \cite[\href{http://dlmf.nist.gov/16.4.E7}{(16.4.7)}]{NIST:DLMF}, is a very-well-poised ${}_8W_7$ summation formula. It can be transformed to a sum of two balanced ${}_4\phi_3$'s using Bailey's transformation \cite[\href{http://dlmf.nist.gov/17.9.E16}{(17.9.16)}]{NIST:DLMF}. If one chooses $(a,b,c,d,e,f)\mapsto(-c,a,q/a,c,-d,-q/d)$ with $d=q^{-n}$ then the coefficients of one of the balanced ${}_4\phi_3$'s vanishes and we are left with the following balanced ${}_4\phi_3$ summation formula
\begin{equation*}
\qhyp43{q^{-n},q^{n+1},\ppm c}{-q,-ac,-\frac{qc}{a}}{q,q}=
\frac{(-1,-q;q)_\infty(-q^{-n}ac,-q^{1+n}ac,-q^{1-n}\frac{c}{a},-q^{2+n}\frac{c}{a};q^2)_\infty}{(-q^{-n},-q^{n+1},-ac,-\frac{qc}{a};q)_\infty},
\end{equation*}
which is equivalent to \cite[\href{http://dlmf.nist.gov/17.7.E11}{(17.7.11)}]{NIST:DLMF}, Andrews' $q$-analogue of the terminating version of Whipple's ${}_3F_2(1)$ summation \cite[\href{http://dlmf.nist.gov/16.4.E7}{(16.4.7)}]{NIST:DLMF}, a balanced terminating ${}_4\phi_3$ summation. The simplified formula is \cite[Theorem 2]{Andrews1976}, \cite[\href{http://dlmf.nist.gov/17.7.E11}{(17.7.11)}]{NIST:DLMF},
\begin{equation}
\qhyp43{q^{-n},q^{n+1},\ppm c}{-q,e,\frac{qc^2}{e}}{q,q}=
q^{\binom{n+1}{2}}\frac{(q^{-n}e,q^{n+1}e,\frac{q^{1-n}c^2}{e},\frac{q^{n+2}c^2}{e};q^2)_\infty}{(e,\frac{qc^2}{e};q)_\infty},
\label{Andrews}
\end{equation}
also given in this form in \cite[(II.19)]{GaspRah},
and which can be more explicitly stated as
\begin{eqnarray}
&&\hspace{-0.60cm}\qhyp43{q^{-n},q^{n+1},\ppm a}{-q,b,\frac{qa^2}{b}}{q,q}
=\left\{ \begin{array}{ll}
\displaystyle \frac{a^n(\frac{q^2}{b},\frac{qb}{a^2};q^2)_{\frac{n}{2}}}
{(qb,\frac{q^2a^2}{b};q^2)_{\frac{n}{2}}} & \qquad\mathrm{if}\ n\ \mathrm{even},\\[15pt]
\displaystyle 
\frac{q(1-\frac{b}{q})(1-\frac{a^2}{b})}{(1-b)(1-\frac{qa^2}{b})}\frac{(-a)^{n-1}(\frac{q^3}{b},\frac{q^2b}{a^2};q^2)_{\frac{n-1}{2}}}{
(q^2b,\frac{q^3a^2}{b};q^2)_{\frac{n-1}{2}}}
%\frac{a^{n+1}(\frac{q}{b},\frac{b}{a^2};q^2)_{\frac{n+1}{2}}}{(b,\frac{qa^2}{b};q^2)_{\frac{n+1}{2}}}
& \qquad\mathrm{if}\ n\ \mathrm{odd}.
\end{array} \right.
\label{Andrews43}
\end{eqnarray}

The following two summations for balanced ${}_4\phi_3$ series are given in Bressoud, Ismail and Stanton (2000) \cite[(2.1), (2.2)]{BressoudIsmailStanton}. However, they are given even earlier in Verma and Jain (1980) \cite[(5.3), (5.4)]{VermaJain1980}.
The first balanced ${}_4\phi_3$ summation formula  \cite[(2.2)]{BressoudIsmailStanton}, \cite[\href{http://dlmf.nist.gov/17.7.E12}{(17.7.12)}]{NIST:DLMF}
(also see \cite[Exercise~3.34; $c\mapsto bq^n$]{GaspRah}) is given by 
\begin{equation}\label{eq:BIS-2.2}
\qhyp43{q^{-2n},q^{2n}b^2,a,qa}{b,qb,q^2a^2}{q^2,q^2}=\frac{a^n(-q,\frac{b}{a};q)_n}{(-qa,b;q)_n},
\end{equation}
which is a $q$-analogue of Bailey's ${}_4F_3(1)$ summation. We will use this summation formula {to derive} Theorem \ref{thm21} in Section~\ref{sec:npt}. The balanced ${}_4\phi_3$ summation formula \cite[\href{http://dlmf.nist.gov/17.7.E13}{(17.7.13)}]{NIST:DLMF}, \cite[(2.1)]{BressoudIsmailStanton}
\begin{equation}\label{eq:BIS-2.1}
\qhyp43{q^{-2n},q^{2n}b^2,a,qa}{qb,q^2b,a^2}{q^2,q^2}=\frac{a^n(1-b)(-q,\frac{qb}{a};q)_n}{(1-q^{2n}b)(-a,b;q)_n},
\end{equation}
is another $q$-analogue of Bailey's ${}_4F_3(1)$ summation. We will use \eqref{eq:BIS-2.1} to recover a result by Nassrallah \eqref{NassrallahB} (see also Remark \ref{rem34}). 
Guo (2013) \cite{G2013} and Wei--Wang (2015) \cite{WX2013} derived terminating balanced ${}_4\phi_3$ summations. 
Guo's balanced summations are 
\cite[p.~1040, second identity]{G2013}
\begin{equation}\label{eq:g}
\qhyp43{q^{-2n},a,b,\frac{q^{3-2n}}{ab}}{\frac{q^{2-2n}}a,\frac{q^{2-2n}}b,qab}{q^2,q^2}=
\frac{(-q,a,b;q)_n(ab;q^2)_n}{(1-q^{2n-1}ab)\,(ab;q)_{n-1}(a,b;q^2)_n},
\end{equation}
\cite[(4.4)]{G2013}
\begin{equation}\label{eq:g2}
\qhyp43{q^{-2n},q^2a,q^2b,\frac{q^{1-2n}}{ab}}{\frac{q^{2-2n}}a,\frac{q^{2-2n}}b,q^3ab}{q^2,q^2}=
\frac{q^{-n}(-q,qa,qb;q)_n(q^2ab;q^2)_n}{(1-q^{2n+1}ab)\,(q^2ab;q)_{n-1}(a,b;q^2)_n},
\end{equation}
and \cite[(4.6)]{G2013}
\begin{equation}\label{eq:g3}
\qhyp43{q^{-2n},a,b,\frac{q^{-1-2n}}{ab}}{\frac{q^{-2n}}a,\frac{q^{-2n}}b,qab}{q^2,q^2}=
\frac{(-q,qa,qb;q)_n(q^2ab;q^2)_n}{(qab;q)_{n}(q^2a,q^2b;q^2)_n}.
\end{equation}
Wei and Wang provided the following balanced summation \cite[Corollary 6]{WX2013}
\begin{equation}\label{eq:wx}
\qhyp43{q^{-2n},a,b,\frac{q^{3-2n}}{ab}}{\frac{q^{2-2n}}a,\frac{q^{4-2n}}b,\frac{ab}{q}}{q^2,q^2}=
\frac{q^{-n}(-q,a,\frac{b}{q};q)_n(\frac{ab}{q^2};q^2)_n}{(\frac{ab}{q^2};q)_{n}(a,\frac{b}{q^2};q^2)_n},
\end{equation}
and a number of closely related ones. 

\begin{rem}
\label{Rem31}
On first appearance, one might consider \eqref{eq:BIS-2.2}--\eqref{eq:wx} to be different. However, after replacing $q^2\mapsto q$, all the summations \eqref{eq:BIS-2.2}--\eqref{eq:wx} should be considered to be equivalent when viewed as terminating balanced ${}_4\phi_3$'s which can be written in terms of Askey--Wilson polynomials, namely with Theorem \ref{thmWeiWang} below.
Using \eqref{aw:def1}, the summations \eqref{eq:BIS-2.2} and \eqref{eq:BIS-2.1} produce Theorem \ref{thmWeiWang} by replacing $(a,b)\mapsto(q^{-\frac14}a,ab)$ and $(a,b)\mapsto(q^\frac14a,q^\frac12ab)$ respectively.
Using \eqref{aw:def3}, the summations \eqref{eq:g}--%
%, \eqref{eq:g2}, \eqref{eq:g3}, 
\eqref{eq:wx} 
produce Theorem \ref{thmWeiWang} by replacing $(a,b)\mapsto (q^{-\frac14}a,q^{-\frac14}b)$, $(a,b)\mapsto(q^{\frac14}a,q^{\frac14}a)$, and $(a,b)\mapsto (q^{-\frac14}a,q^{-\frac14}b)$, $(a,b)\mapsto(q^{\frac14}a,q^{\frac34}b)$ respectively.
\end{rem}

%Theorem \ref{thm23}, which we will prove below.

The summation \cite[\href{http://dlmf.nist.gov/17.7.E14}{(17.7.14)}]{NIST:DLMF}, F.~H.~Jackson's $q$-analogue of Dougall's ${}_7F_6(1)$ summation, is a summation of a {terminating} very-well-poised balanced ${}_8W_7$. It can be written as a multiple of a balanced ${}_4\phi_3$ by taking $(a,b,c,d,e,f)\mapsto(a,b,c,d,\frac{q^{n+1}a^2}{bcd},q^{-n})$ in Bailey's transformation \cite[\href{http://dlmf.nist.gov/17.9.E16}{(17.9.15)}]{NIST:DLMF}. After simplification {this} reduces to
\begin{equation*}
\qhyp32{q^{-n},\frac{q^{n+1}a^2}{bcd},d}{\frac{qa}{b},\frac{qa}{c}}{q,q}=d^n
\frac{(\frac{qa}{bd},\frac{qa}{cd};q)_n}{(\frac{qa}{b},\frac{qa}{c};q)_n},
\end{equation*}
{which is equivalent to the $q$-Pfaff--Saalsch\"utz sum \cite[\href{http://dlmf.nist.gov/17.7.E4}{(17.7.4)}]{NIST:DLMF}},
namely
\begin{equation}\label{eq:3ph2}
\qhyp32{q^{-n},a,b}{c,\frac{q^{1-n}ab}{c}}{q,q}=
\frac{(\frac{c}{a},\frac{c}{b};q)_n}{(c,\frac{c}{ab};q)_n}.
\end{equation}
This is exactly the summation which is responsible for the evaluation
in \eqref{aw32}, and as such useful for reducing 
the Ismail--Wilson generating function for Askey--Wilson polynomials
by a suitable specialization of the parameters.

The summation \cite[Exercise 2.14 (i); $a\mapsto a^2$]{GaspRah}
\begin{equation}\label{GR-2.14-i}
\qhyp43{q^{-n},b,a^2,qa}{b^2q^{1-n},\frac{qa^2}{b},a}{q,q}=\frac{(1+\frac abq^n)(\frac{a^2}{b^2},\frac1{b};q)_n}{(1+\frac ab)(\frac{qa^2}b,\frac1{b^2};q)_n}
\end{equation}
is indeed a summation for a balanced ${}_4\phi_3$ series but in order to evaluate
an $n$-th order Askey--Wilson polynomial (represented as a ${}_4\phi_3$ series using one of the representations in Theorem~\ref{AWthm}) using \eqref{GR-2.14-i} would require to
make one or more of the parameters $a,b,c,d,w$ to depend on $n$ (which can not be conveniently done when considering
generating functions).

Two further terminating balanced ${}_4\phi_3$ summations are listed in Gasper and Rahman's textbook.
There is \cite[(3.10.9); $a\mapsto a^2$ and $w\mapsto abq^{1-n}$]{GaspRah}
\begin{equation}\label{GR-3.10.9}
\qhyp43{q^{-n},-bq^{-n},a^2,qa}{abq^{1-n},-aq^{1-n},a}{q,q}=(qa^2)^{-n}\frac{(1-\frac abq^{2n})(\frac{qa}{b},-a;q)_n}{(1-\frac abq^n)(\frac1{ab},-\frac1{a};q)_n},
\end{equation}
and also \cite[(3.10.10); $a\mapsto ab$]{GaspRah}
\begin{equation}\label{GR-3.10.10}
\qhyp43{q^{-n},-bq^{1-n},ab,b}{b^2q^{1-n},-bq^{-n},qa}{q,q}=\frac{(1+\frac 1b)(1-\frac abq^{2n})(\frac{a}{b},\frac 1b;q)_n}{(1+\frac{q^n}b)(1-\frac ab)(aq,\frac1{b^2};q)_n}.
\end{equation}
The last three summations can actually be easily proved by application of the Sears transformation
\cite[(3.2.1)]{GaspRah} after which the transformed sum, having $q^{-1}$ as an upper parameter,
reduces to the sum of its first two terms.
The transformation \eqref{GR-3.10.10} can be seen to be equivalent to \eqref{GR-3.10.9} by first interchanging $a\leftrightarrow b$, replacing $a\mapsto qa$ followed by $(a,b)\mapsto(1/(qa),1/b)$ and then replacing $z\mapsto 1/z$.

Further, there are \cite[(3.7)]{VJ1983}
\begin{equation}\label{eq:bws}
\qhyp43{q^{-n},q^{1-n},a^2,b^2}{q^{2-2n},ab,qab}{q^2,q^2}=\frac{(-a,b;q)_n+(a,-b;q)_n}{(-1,ab;q)_n},
\end{equation}
and \cite[(3.8)]{VJ1983}
\begin{equation}\label{eq:bws2}
\qhyp43{q^{-n},q^{1-n},a^2,b^2}{q^{-2n},qab,q^2ab}{q^2,q^2}=\frac{(-a,b;q)_{n+1}-(a,-b;q)_{n+1}}{(a-b)\,(-1;q)_{n+1}(qab;q)_n},
\end{equation}
which cannot be written as $n$-th order Askey--Wilson polynomials with parameters
independent from $n$. The summation in \eqref{eq:bws} can be obtained by a careful ($c\to1$) limit from
the following ${}_4\phi_3$ transformation of Berkovich and Warnaar \cite[(3.10)]{BerkovichWarnaar2005}:
\begin{equation}\label{eq:bwt}
\qhyp43{q^{-n},b, c,-c}{-\frac{q^{1-n}b}a,a,c^2}{q,q}=\frac{(\frac{a^2}b;q)_n(c^2;q^2)_n}{(-\frac ab,a,c^2;q)_n}
\qhyp43{q^{-n},q^{1-n},\frac{a^2}{b^2},\frac{a^2}{c^2}}{\frac{q^{2-2n}}{c^2},\frac{a^2}b,\frac{qa^2}b}{q^2,q^2}.
\end{equation}
Specifically, the limit $c\to 1$ in \eqref{eq:bwt} using
\begin{eqnarray*}
&&\hspace{-8cm}\lim_{c\to1}\frac{(c^2;q^2)_k}{(c^2;q)_k}=\begin{cases}1&\text{if $k=0$,}\\\frac 12(-1;q)_k\qquad&\text{if $k>0$},\end{cases}
\end{eqnarray*}
reduces the sum on the left-hand side of the transformation which can then be computed by the
$q$-Pfaff--Saalsch\"utz summation \cite[\href{http://dlmf.nist.gov/17.7.E4}{(17.7.4)}]{NIST:DLMF},
boiling down, after replacing $b$ by $a/b$, to \eqref{eq:bws}. (This derivation of \eqref{eq:bws} from \eqref{eq:bwt} was kindly communicated to us by S.~Ole Warnaar in private communication).

\medskip
Different types of balanced and 2-balanced summations were derived by Andrews (2011)~\cite{A2011} and by Guo (2013) \cite{G2013}.
The involved sums have the special feature that they are symmetric or almost symmetric with respect to reversing them.
We list a few of them as samples.
In \cite[Theorem~3]{A2011} Andrews proved the following 2-balanced ${}_4\phi_3$ summation.
\begin{equation}
\qhyp43{q^{-2n},a,b,\frac{q^{1-2n}}{ab}}{\frac{q^{2-2n}}a,\frac{q^{2-2n}}b,qab}{q^2,q^2}=
\frac{q^{-n}(a,b,-q;q)_n(ab;q^2)_n}{(ab;q)_n(a,b;q^2)_n}.
\end{equation}
Andrews further made the following related conjecture \cite[(6.1)]{A2011} (again concerning
a 2-balanced summation) that was settled by Guo~\cite[Section~3]{G2013}
\begin{eqnarray}
&&\hspace{-0.85cm}\qhyp43{q^{-2n},a,b,\frac{q^{3-2n}}{ab}}{\frac{q^{2-2n}}a,\frac{q^{4-2n}}b,qab}{q^2,q^2}\notag\nonumber\\
&&=\frac{-q^{-1-n}(a,-q;q)_n(b;q)_{n-1}(ab;q^2)_{n-1}}
{(1-q^{2n-1}ab)\,(ab;q)_{n-1}(a,\frac b{q^2};q^2)_n}(q^{2n-2}ab(q^2-b)+q^{n-1}ab(1-q)-q+b).
\label{IsmailR52}
\end{eqnarray}
Both of the above 2-balanced ${}_4\phi_3$ summations conjectured by Andrews were also proved in Ismail and Rahman (2011) \cite{IsmailRahman11}, but the statement and proof of \eqref{IsmailR52} therein both contain typographical errors.

\subsection{New terminating $1$-balanced ${}_4\phi_3$ summations}\label{subsec:kbal}

\noindent While carrying out a systematic search in the literature for terminating balanced ${}_4\phi_3$ summations,
and making computer experiments, we found the following additional summations that are similar to \eqref{Andrews2},
which appear be new.
First we consider summations of terminating ($1$-)balanced ${}_4\phi_3$ series.
\medskip

\begin{thm}
\label{thm31}
Let $q,a,c\in\CCast$. Then one has the following two equivalent $q$-quadratic summations for a terminating balanced ${}_4\phi_3$, namely 
\begin{equation}
\qhyp43{q^{-n},q^na,\ppm \sqrt{c}}{qc,\ppm \sqrt{a}}{q,q}=c^{\lfloor\frac{n+1}2\rfloor}
\frac{(q,\frac{a}{c};q^2)_{\lfloor\frac{n+1}2\rfloor}}{(a,qc;q^2)_{\lfloor\frac{n+1}2\rfloor}},
\label{eq:n2}
\end{equation}
\begin{equation}
\qhyp43{q^{-n},q^na,\ppm q\sqrt{c}}{qc,\ppm q\sqrt{a}}{q,q}=(-q)^nc^{\lfloor\frac{n+1}2\rfloor}
\frac{(1-a)}{(1-q^{2n}a)}\frac{(q,\frac{a}{c};q^2)_{\lfloor\frac{n+1}2\rfloor}}{(a,qc;q^2)_{\lfloor\frac{n+1}2\rfloor}},
\label{eq:n1}
\end{equation}
where the two summations in \eqref{eq:n2} and \eqref{eq:n1} can be shown
to be equivalent using 
Sears' balanced terminating ${}_4\phi_3$ transformation 
\cite[\href{http://dlmf.nist.gov/17.9.E14}{(17.9.14)}]{NIST:DLMF}.
\end{thm}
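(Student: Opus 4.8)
The plan is to prove \eqref{eq:n2} and then obtain \eqref{eq:n1} for free. First I would verify that Sears' balanced ${}_4\phi_3$ transformation \cite[\href{http://dlmf.nist.gov/17.9.E14}{(17.9.14)}]{NIST:DLMF} carries the series in \eqref{eq:n2} into the series in \eqref{eq:n1}: applying Sears with distinguished upper parameter $q^na$ and the assignment $(b,c,d,e,f)\mapsto(\sqrt c,-\sqrt c,qc,\sqrt a,-\sqrt a)$, which is balanced since $qc\cdot\sqrt a\cdot(-\sqrt a)=-qac=q^{1-n}\cdot q^na\cdot\sqrt c\cdot(-\sqrt c)$, produces exactly $\qhyp43{q^{-n},q^na,\pm q\sqrt c}{qc,\pm q\sqrt a}{q,q}$ with prefactor $q^{n^2}a^n(q^{-2n}/a;q^2)_n/(a;q^2)_n$, where the collapses $(\pm\sqrt a;q)_n=(a;q^2)_n$ and $(\pm q^{-n}/\sqrt a;q)_n=(q^{-2n}/a;q^2)_n$ come from \eqref{sqPoch}. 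A short computation reduces this prefactor to $(-1)^nq^{-n}(1-q^{2n}a)/(1-a)$, which is precisely the ratio of the two claimed right-hand sides; hence \eqref{eq:n2} and \eqref{eq:n1} are equivalent and it suffices to establish either one.

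To prove \eqref{eq:n2} itself I would use \eqref{sqPoch} to collapse the sign-pairs, writing the left-hand side as the single terminating sum $\sum_{k\ge 0}\frac{(q^{-n},q^na;q)_k\,(c;q^2)_k}{(q,qc;q)_k\,(a;q^2)_k}q^k$, which is a \emph{mixed-base} series (the factors $(c;q^2)_k$ and $(a;q^2)_k$ sit in base $q^2$, the rest in base $q$). I would then separate according to the parity of $n$, matching the two branches of $\lfloor(n+1)/2\rfloor$ on the right, and within each class split the summation index via the duplication $(x;q)_{2j}=(x,xq;q^2)_j$ so that each piece becomes a genuinely single-base terminating series in base $q^2$, summable by the $q$-Pfaff--Saalsch\"utz summation \cite[\href{http://dlmf.nist.gov/17.7.E4}{(17.7.4)}]{NIST:DLMF} (or, for the shorter pieces, by $q$-Chu--Vandermonde, equivalently by comparison with Andrews' sum \eqref{Andrews2}). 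Reassembling and simplifying the resulting $q$-shifted factorials should reproduce $c^{\lfloor(n+1)/2\rfloor}(q,a/c;q^2)_{\lfloor(n+1)/2\rfloor}/(a,qc;q^2)_{\lfloor(n+1)/2\rfloor}$. As a robust alternative that sidesteps the bookkeeping, one can run the $q$-analogue of Zeilberger's algorithm to produce a low-order recurrence in $n$ satisfied by the left-hand side and check directly that the proposed closed form satisfies the same recurrence together with finitely many initial values.

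The main obstacle is exactly this mixed-base structure: because the numerator carries $\pm\sqrt c$ and the denominator $\pm\sqrt a$ with $a\ne c$, the series is not a balanced ${}_4\phi_3$ in a single base, so no standard single-step summation applies verbatim and the $\pm$-collapse that trivially linked \eqref{Andrews2} to known sums no longer lands on a summable template. A second delicate point is that, unlike \eqref{Andrews2}, the present sum does \emph{not} vanish for odd $n$ (already the $n=1$ instance equals the nonzero value $\tfrac{(1-q)(c-a)}{(1-a)(1-qc)}$), so the even and odd cases genuinely differ and one must pin down the floor-function exponent $\lfloor(n+1)/2\rfloor$ across both parities rather than inheriting a single formula with a $\tfrac n2$.
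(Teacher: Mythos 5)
Your Sears step is correct and checks out in detail: with the assignment you give, the prefactor $q^{n^2}a^n\,(q^{-2n}/a;q^2)_n/(a;q^2)_n$ does simplify (inverting the $q^{-2n}$-shifted factorial) to $(-1)^nq^{-n}(1-q^{2n}a)/(1-a)$, which is exactly the ratio of the two right-hand sides, so \eqref{eq:n2} and \eqref{eq:n1} are indeed equivalent and it suffices to prove \eqref{eq:n2}; this is the same link the theorem's statement asserts. The genuine gap is in your main derivation of \eqref{eq:n2}. Splitting the summation index as $k=2j$ (or $k=2j+1$) and invoking the duplication $(x;q)_{2j}=(x,qx;q^2)_j$ does convert the base-$q$ factors to base $q^2$, but it \emph{simultaneously} converts the factors that are already in base $q^2$ into base $q^4$: one has $(c;q^2)_{2j}=(c,q^2c;q^4)_j$ and $(a;q^2)_{2j}=(a,q^2a;q^4)_j$. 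So each parity piece is again a mixed-base series (now $q^2$ against $q^4$) --- the very obstruction you correctly identify in your last paragraph has merely been pushed up one level, and neither $q$-Pfaff--Saalsch\"utz nor $q$-Chu--Vandermonde applies verbatim to the pieces. The announced reduction to ``genuinely single-base terminating series in base $q^2$'' therefore does not exist, and the central step of your primary argument fails.

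Your fallback via the $q$-analogue of Zeilberger's algorithm is legitimate in principle --- the paper itself remarks, after Corollary \ref{cor:n6}, that such terminating summations can be certified automatically --- but as written it is a plan rather than a proof: no recurrence is produced and no verification is performed, and since the closed form carries $\lfloor\frac{n+1}2\rfloor$ you would have to check the certified recurrence against the conjectured product separately for even and odd $n$. The paper's actual proof avoids all base-mixing manipulations: it proceeds by induction on $n$, using the elementary contiguous relation \eqref{eq:elid} to split the ${}_4\phi_3$ at level $n+1$ into the level-$n$ series (handled by the inductive hypothesis) plus a multiple of a shifted series that is evaluable by Andrews' $q$-Watson sum \eqref{Andrews2} and vanishes for odd $n$ --- which is precisely what produces the floor-function exponent across the two parities. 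Your observation that the sum does not vanish for odd $n$ (your $n=1$ value $\frac{(1-q)(c-a)}{(1-a)(1-qc)}$ is correct) is a useful sanity check, but it does not repair the main step.
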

\begin{proof}
We only need to show \eqref{eq:n2} which we do by induction on $n$.
For $n=0$ the summation in \eqref{eq:n2} is trivial. We assume that the formula is true up to a fixed non-negative integer $n$. To prove \eqref{eq:n2} for $n$ replaced by $n+1$, we use the elementary identity
\begin{equation}\label{eq:elid}
\frac{(1-q^{-n-1})(1-q^{n+k}a)}{(1-q^{-n-1+k})(1-q^{n}a)}
=1-q^{-n-1}\frac{(1-q^k)(1-q^{2n+1}a)}{(1-q^{-n-1+k})(1-q^{n}a)}
\end{equation}
to obtain
\begin{align*}
&\qhyp43{q^{-n-1},q^{n+1}a,\ppm \sqrt{c}}{qc,\ppm \sqrt{a}}{q,q}\\
&=
\qhyp43{q^{-n},q^{n}a,\ppm \sqrt{c}}{qc,\ppm \sqrt{a}}{q,q}
-q^{-n}\frac{(1-c)(1-q^{2n+1}a)}{(1-qc)(1-a)}
\qhyp43{q^{-n},q^{n+1}a,\ppm q\sqrt{c}}{q^2c,\ppm q\sqrt{a}}{q,q}.
\end{align*}
The first $_4\phi_3$ series can be simplified by the inductive
hypothesis, whereas the second series can be evaluated by \eqref{Andrews2},
and, in particular, vanishes for odd $n$. As a result, we obtain
for the last expression,
\begin{eqnarray*}
&&\hspace{-11.5cm}c^{\lfloor\frac{n+1}2\rfloor}
\frac{(q,\frac{a}{c};q^2)_{\lfloor\frac{n+1}2\rfloor}}{(a,qc;q^2)_{\lfloor\frac{n+1}2\rfloor}}
\end{eqnarray*}
in case $n$ is odd, and
\begin{align*}
&c^{\lfloor\frac{n+1}2\rfloor}
\frac{(q,\frac{a}{c};q^2)_{\lfloor\frac{n+1}2\rfloor}}{(a,qc;q^2)_{\lfloor\frac{n+1}2\rfloor}}-
q^{-n}\frac{(1-c)(1-q^{2n+1}a)}{(1-qc)(1-a)}
c^{\lfloor\frac{n+1}2\rfloor}
\frac{(q,\frac{a}{c};q^2)_{\lfloor\frac{n+1}2\rfloor}}{(q^2a,q^3c;q^2)_{\lfloor\frac{n+1}2\rfloor}}\\
&=[(1-q^{n+1}c)(1-q^na)-(1-c)(1-q^{2n+1}a)]
c^{\lfloor\frac{n+1}2\rfloor}
\frac{(q,\frac{a}{c};q^2)_{\lfloor\frac{n+1}2\rfloor}}{(a,qc;q^2)_{\lfloor\frac{n+1}2+1\rfloor}}\\
&=c(1-q^{n+1})(1-\frac{q^na}c)c^{\lfloor\frac{n+1}2\rfloor}
\frac{(q,\frac{a}{c};q^2)_{\lfloor\frac{n+1}2\rfloor}}{(a,qc;q^2)_{\lfloor\frac{n+1}2+1\rfloor}}
=c^{\lfloor\frac{n+1}2+1\rfloor}
\frac{(q,\frac{a}{c};q^2)_{\lfloor\frac{n+1}2+1\rfloor}}{(a,qc;q^2)_{\lfloor\frac{n+1}2+1\rfloor}}
\end{align*}
in case $n$ is even, which furnishes the theorem.
\end{proof}

\begin{thm}
\label{thmn5}
Let $q,a,c\in\CCast$. Then one has the following $q$-quadratic summation for a terminating balanced ${}_4\phi_3$, namely 
\begin{eqnarray}
&&\hspace{-1.5cm}\qhyp43{q^{-n},q^{n}a,\ppm\sqrt{c}}{c,q\sqrt{a},-\sqrt{a}}{q,q}=
c^{\lfloor\frac{n}2\rfloor}a^{\frac n2-\lfloor\frac{n}2\rfloor}\frac{(1-\sqrt{a})}{(1-q^n\sqrt{a})}
\frac{(q;q^2)_{\lfloor\frac{n+1}2\rfloor}}{(a;q^2)_{\lfloor\frac{n+1}2\rfloor}}
\frac{(\frac{q^2a}{c};q^2)_{\lfloor\frac{n}2\rfloor}}{(qc;q^2)_{\lfloor\frac{n}2\rfloor}}.
\label{eq:nn5}
\end{eqnarray}
\end{thm}
\begin{proof}
This result readily follows from taking $(a,b,c)\mapsto(-\sqrt{a},q^{n}a\sqrt{c})$ in \eqref{eq:bwt}.
In this case the $_4\phi_3$ series on the right-hand side can be seen to reduce to the series
\begin{equation*}
\qhyp32{q^{-\lfloor\frac{n}2\rfloor},\frac{q^{-2n}}{a},\frac{a}{c}}{\frac{q^{2-2n}}{c},q^{-\lfloor\frac{n}2\rfloor}}{q^2,q^2},
\end{equation*}
which factorizes in closed form due to the $q$-Pfaff--Saalsch\"utz summation \eqref{eq:3ph2}.
Simplification of the combined factors gives \eqref{eq:nn5}.
\end{proof}

\noindent The following terminating balanced ${}_4\phi_3$ is not new but due to Wei, Gong and Li \cite[Corollary~5]{WGL2013},
nevertheless we display it in this section, together with a direct proof.
\begin{thm}
\label{thm33new}
Let $q,a,c\in\CCast$. Then one has the following $q$-quadratic summation for a terminating balanced ${}_4\phi_3$, namely 
\begin{eqnarray}
&&\hspace{-3.5cm}\qhyp43{q^{-n},q^{n+1}a,q\sqrt{c},-\sqrt{c}}{qc,\ppm q\sqrt{a}}{q,q}=
(-1)^n c^{\frac n2}
\frac{(q;q^2)_{\lfloor\frac{n+1}2\rfloor}}{(qc;q^2)_{\lfloor\frac{n+1}2\rfloor}}
\frac{(\frac{q^2a}{c};q^2)_{\lfloor\frac{n}2\rfloor}}{(q^2a;q^2)_{\lfloor\frac{n}2\rfloor}}.
\label{eq:nn3}
\end{eqnarray}
\end{thm}
\begin{proof}
This immediately follows from the $(\sqrt{a},\sqrt{c})\mapsto(-\frac{q^{-n}}{\sqrt{c}},-\frac{q^{-n}}{\sqrt{a}})$
substitution of Theorem~\ref{thmn5}, together with reversing the order of summation of the
$_4\phi_3$ series according to \cite[Exercise 1.4 (ii)]{GaspRah}.
\end{proof}

\begin{thm}
\label{thm34new}
Let $q,a,c\in\CCast$. Then one has the following two equivalent $q$-quadratic summations for a terminating balanced ${}_4\phi_3$, namely 
\begin{eqnarray}
&&\hspace{-0.5cm}\qhyp43{q^{-n},q^{n+1}a,\ppm\sqrt{c}}{qc,q\sqrt{a},-\sqrt{a}}{q,q}=
c^{\lfloor\frac n2\rfloor}
\frac{(q;q^2)_{\lfloor\frac{n+1}2\rfloor}}{(qc;q^2)_{\lfloor\frac{n+1}2\rfloor}}
\frac{(\frac{q^2a}{c};q^2)_{\lfloor\frac{n}2\rfloor}}{(q^2a;q^2)_{\lfloor\frac{n}2\rfloor}}\notag\\
&&\hspace{5.5cm}
\times
%\begin{cases}
%(1+q)\text{\ $n$\  odd,}\\
%(qc-q^na)(1-q^{n+1})+(1-q^{n+1}a)(1-q^{n+1}c)\text{\ $n$\  even}.\end{cases}
\left\{ \begin{array}{ll}
\displaystyle 
1
& \ \mathrm{if}\ n\ \mathrm{even},\\[4pt]
\displaystyle \frac{(c+\sqrt{a})}{(1+\sqrt{a})} & \ \mathrm{if}\ n\ \mathrm{odd},
\end{array} \right.
\label{eq:nn2}
\\
&&\hspace{-0.5cm}\qhyp43{q^{-n},q^{n+1}a,\ppm q\sqrt{c}}{qc,q\sqrt{a},-q^2\sqrt{a}}{q,q}=
(-1)^n q^n c^{\lfloor\frac n2\rfloor}\frac{(1+\sqrt{a})(1+q\sqrt{a})}{(1+q^{n}\sqrt{a})(1+q^{n+1}\sqrt{a})}
\frac{(q;q^2)_{\lfloor\frac{n+1}2\rfloor}}{(qc;q^2)_{\lfloor\frac{n+1}2\rfloor}}
\frac{(\frac{q^2a}{c};q^2)_{\lfloor\frac{n}2\rfloor}}{(q^2a;q^2)_{\lfloor\frac{n}2\rfloor}}\notag\\
&&\hspace{5.5cm}
\times
%\begin{cases}
%(1+q)\text{\ $n$\  odd,}\\
%(qc-q^na)(1-q^{n+1})+(1-q^{n+1}a)(1-q^{n+1}c)\text{\ $n$\  even}.\end{cases}
\left\{ \begin{array}{ll}
\displaystyle 1
& \ \mathrm{if}\ n\ \mathrm{even},\\[4pt]
\displaystyle \frac{(c+\sqrt{a})}{(1+\sqrt{a})} & \ \mathrm{if}\ n\ \mathrm{odd},
\end{array} \right.
\label{eq:nn2r}
\end{eqnarray}
where the two summations in \eqref{eq:nn2} and \eqref{eq:nn2r} can be shown
to be equivalent using 
Sears' balanced terminating ${}_4\phi_3$ transformation 
\cite[\href{http://dlmf.nist.gov/17.9.E14}{(17.9.14)}]{NIST:DLMF}.
\end{thm}
\begin{proof}
We only need to show \eqref{eq:nn2} which we achieve by the use of contiguous relations.
In particular, application of \cite[(3.3)]{Krattenthaler1993}
(which is a straightforward extension of \cite[Exercise 1.9 (iii)]{GaspRah})
yields
\begin{multline*}
\qhyp43{q^{-n},q^{n+1}a,\ppm\sqrt{c}}{qc,q\sqrt{a},-\sqrt{a}}{q,q}=
\qhyp43{q^{-n},q^{n}a,\ppm\sqrt{c}}{c,q\sqrt{a},-\sqrt{a}}{q,q}\\+
\frac{q(q^na-c)(1-q^{-n})}{(1-qc)(1-q\sqrt{a})(1+\sqrt{a})}
\qhyp43{q^{1-n},q^{n+1}a,\ppm q\sqrt{c}}{q^2c,q^2\sqrt{a},-q\sqrt{a}}{q,q}.
\end{multline*}
Now the two $_4\phi_3$ series on the right-hand side can be
evaluated by different instances of \eqref{eq:nn5}.
The two terms can be combined to the right-hand side in \eqref{eq:nn2}.
\end{proof}

\begin{thm}
\label{thm35new}
Let $q,a,c\in\CCast$. Then one has the following $q$-quadratic summation for a terminating balanced ${}_4\phi_3$, namely 
\begin{eqnarray}
&&\hspace{-2.6cm}\qhyp43{q^{-n},q^{n}a,q\sqrt{c},-\sqrt{c}}{qc,q\sqrt{a},-\sqrt{a}}{q,q}=
(-1)^n\frac{c^{\lfloor\frac{n+1}2\rfloor}}{(\sqrt{c}+\sqrt{a})}\frac{(1-\sqrt{a})}{(1-q^n\sqrt{a})}
\frac{(q,\frac{a}{c};q^2)_{\lfloor\frac{n+1}2\rfloor}}{(a,qc;q^2)_{\lfloor\frac{n+1}2\rfloor}}\notag\\
&&\hspace{3.8cm}
\times
%\begin{cases}
%(1+q)\text{\ $n$\  odd,}\\
%(qc-q^na)(1-q^{n+1})+(1-q^{n+1}a)(1-q^{n+1}c)\text{\ $n$\  even}.\end{cases}
\left\{ \begin{array}{ll}
\displaystyle 
(\sqrt{c}+q^n\sqrt{a})
& \ \mathrm{if}\ n\ \mathrm{even},\\[6pt]
\displaystyle (1+q^n\sqrt{ac}) & \ \mathrm{if}\ n\ \mathrm{odd}.
\end{array} \right.
\label{eq:nn1}
\end{eqnarray}
\end{thm}
\begin{proof}
Just as in the proof of Theorem~\ref{thm34new}, we use the
contiguous relation \cite[(3.3)]{Krattenthaler1993}.
This yields
\begin{multline*}
\qhyp43{q^{-n},q^{n}a,q\sqrt{c},-\sqrt{c}}{qc,q\sqrt{a},-\sqrt{a}}{q,q}=
\qhyp43{q^{-n},q^{n}a,\ppm\sqrt{c}}{qc,\ppm\sqrt{a}}{q,q}\\
+\frac{q(\sqrt{c}-\sqrt{a})(1-q^{-n})(1-q^{n}a)(1+\sqrt{c})}{(1-qc)(1-q\sqrt{a})(1-a)}
\qhyp43{q^{1-n},q^{n+1}a,\ppm q\sqrt{c}}{q^2c,q^2\sqrt{a},-q\sqrt{a}}{q,q}.
\end{multline*}
Now the two $_4\phi_3$ series on the right-hand side can be
evaluated by \eqref{eq:n2} and \eqref{eq:nn5}, respectively.
The two terms can be combined to the right-hand side in \eqref{eq:nn1}.
\end{proof}

\begin{thm}
\label{thm36new}
Let $q,a,c\in\CCast$. Then one has the following two equivalent $q$-quadratic summations for a terminating balanced ${}_4\phi_3$, namely 
\begin{eqnarray}
&&\hspace{-3.0cm}\qhyp43{q^{-n},q^{n}a,q\sqrt{c},-\sqrt{c}}{q^2c,\ppm\sqrt{a}}{q,q}=
(-1)^n c^{\lfloor\frac{n}2\rfloor}\frac{(1-qc)}{(1+q\sqrt{c})}
\frac{(q;q^2)_{\lfloor\frac{n+1}2\rfloor}(\frac{a}{c};q^2)_{\lfloor\frac{n}2\rfloor}}{(a,qc;q^2)_{\lfloor\frac{n+1}2\rfloor}}\notag\\
&&\hspace{3.2cm}
\times
%\begin{cases}
%(1+q)\text{\ $n$\  odd,}\\
%(qc-q^na)(1-q^{n+1})+(1-q^{n+1}a)(1-q^{n+1}c)\text{\ $n$\  even}.\end{cases}
\left\{ \begin{array}{ll}
\displaystyle 
\frac{(1+q^{n+1}\sqrt{c})}{(1-q^{n+1}c)}
& \ \mathrm{if}\ n\ \mathrm{even},\\[12pt]
\displaystyle (\sqrt{c}+q^{n-1}a) & \ \mathrm{if}\ n\ \mathrm{odd},
\end{array} \right.
\label{eq:nn4}
\end{eqnarray}
\begin{eqnarray}
&&\hspace{-1.5cm}\qhyp43{q^{-n},q^{n}a,q\sqrt{c},-q^2\sqrt{c}}{q^2c,\ppm q\sqrt{a}}{q,q}=
q^n c^{\lfloor\frac{n}2\rfloor}
\frac{(1-a)}{(1-q^{2n}a)}\frac{(1-qc)}{(1+q\sqrt{c})}
\frac{(q;q^2)_{\lfloor\frac{n+1}2\rfloor}(\frac{a}{c};q^2)_{\lfloor\frac{n}2\rfloor}}{(a,qc;q^2)_{\lfloor\frac{n+1}2\rfloor}}\notag\\
&&\hspace{4.75cm}
\times
%\begin{cases}
%(1+q)\text{\ $n$\  odd,}\\
%(qc-q^na)(1-q^{n+1})+(1-q^{n+1}a)(1-q^{n+1}c)\text{\ $n$\  even}.\end{cases}
\left\{ \begin{array}{ll}
\displaystyle 
\frac{(1+q^{n+1}\sqrt{c})}{(1-q^{n+1}c)}
& \ \mathrm{if}\ n\ \mathrm{even},\\[12pt]
\displaystyle (\sqrt{c}+q^{n-1}a) & \ \mathrm{if}\ n\ \mathrm{odd},
\end{array} \right.
\label{eq:nn4r}
\end{eqnarray}
where the two summations in \eqref{eq:nn4} and \eqref{eq:nn4r} can be shown
to be equivalent using 
Sears' balanced terminating ${}_4\phi_3$ transformation 
\cite[\href{http://dlmf.nist.gov/17.9.E14}{(17.9.14)}]{NIST:DLMF}.
\end{thm}
\begin{proof}
It suffices to prove \eqref{eq:nn4}.
Just as in the proofs of Theorems~\ref{thm34new} and \ref{thm35new},
we use the contiguous relation \cite[(3.3)]{Krattenthaler1993}.
This yields
\begin{eqnarray*}
&&\hspace{-0.3cm}\qhyp43{q^{-n},q^{n}a,q\sqrt{c},-\sqrt{c}}{q^2c,\ppm\sqrt{a}}{q,q}\\[0.05cm]
&&\hspace{0.7cm}=
\qhyp43{q^{-n},q^{n}a,\ppm\sqrt{c}}{qc,\ppm\sqrt{a}}{q,q}+\frac{q \sqrt{c}\,(1-q^{-n})(1-q^{n}a)(1+\sqrt{c})}{(1+q\sqrt{c})(1-qc)(1-a)}
\qhyp43{q^{1-n},q^{n+1}a,\ppm q\sqrt{c}}{q^3c,\ppm q\sqrt{a}}{q,q}.
\end{eqnarray*}
Now the two $_4\phi_3$ series on the right-hand side can be
evaluated by different instances of \eqref{eq:n2}.
The two terms can be combined to the right-hand side in \eqref{eq:nn4}.
\end{proof}

\noindent The next balanced ${}_4\phi_3$ summation does not factorize completely
(in particular not for even $n$), but possesses a complete product evaluation for odd $n$.
\begin{thm}
\label{thm33}
Let $q,a,c\in\CCast$. Then one has the following two equivalent $q$-quadratic summations for a terminating balanced ${}_4\phi_3$, namely
\begin{eqnarray}
&&\hspace{-0.5cm}\qhyp43{q^{-n},q^{n+1}a,\ppm \sqrt{c}}{q^2c,\ppm \sqrt{a}}{q,q}=
\frac{c^{\lfloor\frac{n+1}2\rfloor}}{(1-q^2c)}
\frac{(q,\frac ac;q^2)_{\lfloor\frac{n+1}2\rfloor}}{(a;q^2)_{\lfloor\frac{n+2}2\rfloor}(q^3c;q^2)_{\lfloor\frac n2\rfloor}}\notag\\
&&\hspace{5.0cm}
\times
%\begin{cases}
%(1+q)\text{\ $n$\  odd,}\\
%(qc-q^na)(1-q^{n+1})+(1-q^{n+1}a)(1-q^{n+1}c)\text{\ $n$\  even}.\end{cases}
\left\{ \begin{array}{ll}
\displaystyle 
(qc-q^na)(1-q^{n+1})+(1-q^{n+1}a)(1-q^{n+1}c)
& \ \mathrm{if}\ n\ \mathrm{even},\\[10pt]
\displaystyle (1+q) & \ \mathrm{if}\ n\ \mathrm{odd},
\end{array} \right.
\label{eq:n5}
\end{eqnarray}
\begin{eqnarray}
&&\hspace{-0.3cm}\qhyp43{q^{-n},q^{n+1}a,\ppm q^2\sqrt{c}}{q^2c,\ppm q^2\sqrt{a}}{q,q}=
\frac{(-1)^n q^{2n}c^{\lfloor\frac{n+1}2\rfloor}\,(1-q^2 a)}{(1-q^2c)(1-q^{2n}a)(1-q^{2n+2}a)}
\frac{(q,\frac ac;q^2)_{\lfloor\frac{n+1}2\rfloor}}{(q^2 a,q^3c;q^2)_{\lfloor\frac n2\rfloor}}\notag\\
&&\hspace{5.3cm}
\times
%\begin{cases}
%(1+q)\text{\ $n$\  odd,}\\
%(qc-q^na)(1-q^{n+1})+(1-q^{n+1}a)(1-q^{n+1}c)\text{\ $n$\  even}.\end{cases}
\left\{ \begin{array}{ll}
\displaystyle 
(qc-q^na)(1-q^{n+1})+(1-q^{n+1}a)(1-q^{n+1}c)
& \ \mathrm{if}\ n\ \mathrm{even},\\[10pt]
\displaystyle (1+q) & \ \mathrm{if}\ n\ \mathrm{odd},
\end{array} \right.
\label{eq:n5r}
\end{eqnarray}
where the two summations in \eqref{eq:n5} and \eqref{eq:n5r} can be shown
to be equivalent using 
Sears' balanced terminating ${}_4\phi_3$ transformation 
\cite[\href{http://dlmf.nist.gov/17.9.E14}{(17.9.14)}]{NIST:DLMF}.
\end{thm}

\begin{proof}
The proof is similar to that of Theorem~\ref{thm31} and uses induction on $n$.
It suffices to prove \eqref{eq:n5}.
Already the $n=0$ is more involved than that of Theorem~\ref{thm31} but still trivial.
Using the $a\mapsto qa$ case of \eqref{eq:elid}, the $n\mapsto n+1$ case of the sum
is split into two terms. The first term is simplified by the inductive hypothesis
(where division into two cases has to be done, according to the parity of $n$)
while the second term is simplified by an instance of Theorem~\ref{thm31}.
The computational details are routine and thus omitted.
\end{proof}

\noindent 
The next two balanced ${}_4\phi_3$ summations are similar in nature to Theorem~\ref{thm33}; they possess
complete product evaluations for odd $n$.
\begin{thm}
\label{thmf1}
Let $q,a,c\in\CCast$. Then one has the following two equivalent $q$-quadratic summations for a terminating balanced ${}_4\phi_3$, namely
\begin{eqnarray}
&&\hspace{-1.35cm}\qhyp43{q^{-n},q^{n+1}a,q\sqrt{c},-\sqrt{c}}{q^2c,q\sqrt{a},-\sqrt{a}}{q,q}=
\frac{(-1)^n\cdot q^{\lfloor\frac{n+1}2\rfloor-\lceil\frac{n+1}2\rceil} c^{\lfloor\frac{n+1}2\rfloor}}
{(1+\sqrt{a})(\sqrt{c}+\sqrt{a})(1+q\sqrt{c})}
\frac{(q,\frac ac;q^2)_{\lfloor\frac{n+1}2\rfloor}}{(q^2a,q^3c;q^2)_{\lfloor\frac{n}2\rfloor}}\notag\\
&&
\times
\left\{ \begin{array}{ll}
\displaystyle 
(q\sqrt{c}+\sqrt{a})(1+q^{n+1}\sqrt{a})(1+q^{n+1}\sqrt{c})-\sqrt{a}(1-q^{n+1})(1-q^{n+2}c)
& \ \mathrm{if}\ n\ \mathrm{even},\\[10pt]
\displaystyle 1 & \ \mathrm{if}\ n\ \mathrm{odd},
\end{array} \right.
\label{eq:f1}
\end{eqnarray}
\begin{eqnarray}
&&\qhyp43{q^{-n},q^{n+1}a,q\sqrt{c},-q^2\sqrt{c}}{q^2c,q\sqrt{a},-q^2\sqrt{a}}{q,q}=
\frac{q^{2\lfloor\frac{n+1}2\rfloor-1} c^{\lfloor\frac{n+1}2\rfloor}\,(1+q
\sqrt{a})}{(1+q^n\sqrt{a})(1+q^{n+1}\sqrt{a})(\sqrt{c}+\sqrt{a})(1+q\sqrt{c})}
\frac{(q,\frac ac;q^2)_{\lfloor\frac{n+1}2\rfloor}}{(q^2a,q^3c;q^2)_{\lfloor\frac{n}2\rfloor}}\notag\\
&&\hspace{1.5cm}
\times
\left\{ \begin{array}{ll}
\displaystyle 
(q\sqrt{c}+\sqrt{a})(1+q^{n+1}\sqrt{a})(1+q^{n+1}\sqrt{c})-\sqrt{a}(1-q^{n+1})(1-q^{n+2}c)
& \ \mathrm{if}\ n\ \mathrm{even},\\[10pt]
\displaystyle 1 & \ \mathrm{if}\ n\ \mathrm{odd},
\end{array} \right.
\label{eq:f1r}
\end{eqnarray}
where the two summations in \eqref{eq:f1} and \eqref{eq:f1r} can be shown
to be equivalent using 
Sears' balanced terminating ${}_4\phi_3$ transformation 
\cite[\href{http://dlmf.nist.gov/17.9.E14}{(17.9.14)}]{NIST:DLMF}.
\end{thm}

\begin{proof}
It suffices to prove \eqref{eq:f1}. 
Just as in the proofs of Theorems~\ref{thm34new} and
\ref{thm35new}, we use the
contiguous relation \cite[(3.3)]{Krattenthaler1993}.
This yields
\begin{multline*}
\qhyp43{q^{-n},q^{n+1}a,q\sqrt{c},-\sqrt{c}}{q^2c,q\sqrt{a},-\sqrt{a}}{q,q}=
\qhyp43{q^{-n},q^{n+1}a,\ppm\sqrt{c}}{q^2c,\ppm\sqrt{a}}{q,q}\\
+\frac{q(\sqrt{c}-\sqrt{a})(1-q^{-n})(1-q^{n+1}a)(1+\sqrt{c})}{(1-q^2c)(1-q\sqrt{a})(1-a)}
\qhyp43{q^{1-n},q^{n+2}a,\ppm q\sqrt{c}}{q^3c,q^2\sqrt{a},-q\sqrt{a}}{q,q}.
\end{multline*}
Now the two $_4\phi_3$ series on the right-hand side can be
evaluated by \eqref{eq:n5} and \eqref{eq:nn2}, respectively.
The two terms can be combined to the right-hand side in \eqref{eq:f1}.
\end{proof}

\begin{thm}
\label{thmf2}
Let $q,a,c\in\CCast$. Then one has the following two equivalent $q$-quadratic summations for a terminating balanced ${}_4\phi_3$, namely
\begin{eqnarray}
&&\hspace{-.5cm}\qhyp43{q^{-n},q^{n+1}a,q\sqrt{c},-\sqrt{c}}{c,q\sqrt{a},-q^2\sqrt{a}}{q,q}=
\frac{(-1)\cdot c^{\lfloor\frac{n+1}2\rfloor}\,(1+q\sqrt{a})}{(\sqrt{c}-q\sqrt{a})(1-\sqrt{c})(1+q^n\sqrt{a})(1+q^{n+1}\sqrt{a})}
\frac{(q,\frac{q^2a}c;q^2)_{\lfloor\frac{n+1}2\rfloor}}{(q^2a,qc;q^2)_{\lfloor\frac{n}2\rfloor}}\notag\\
&&\hspace{1.5cm}
\times
%\begin{cases}
%(1+q)\text{\ $n$\  odd,}\\
%(qc-q^na)(1-q^{n+1})+(1-q^{n+1}a)(1-q^{n+1}c)\text{\ $n$\  even}.\end{cases}
\left\{ \begin{array}{ll}
\displaystyle 
(\sqrt{a}-\sqrt{c})(1+q^{n+1}\sqrt{a})(1-q^{n}\sqrt{c})-
\sqrt{a}(1-q^{n+1})(1-q^{n}c)
& \ \mathrm{if}\ n\ \mathrm{even},\\[10pt]
\displaystyle 1 & \ \mathrm{if}\ n\ \mathrm{odd},
\end{array} \right.
\label{eq:f2}
\\
&&\hspace{-.5cm}\qhyp43{q^{-n},q^{n+1}a,q^{-1}\sqrt{c},-\sqrt{c}}{c,q\sqrt{a},-\sqrt{a}}{q,q}=
\frac{(-1)^{n+1}q^{-n}c^{\lfloor\frac{n+1}2\rfloor}}{(1+\sqrt{a})(\sqrt{c}-q\sqrt{a})(1-\sqrt{c})}
\frac{(q,\frac{q^2a}c;q^2)_{\lfloor\frac{n+1}2\rfloor}}{(q^2a,qc;q^2)_{\lfloor\frac{n}2\rfloor}}\notag\\
&&\hspace{1.5cm}
\times
%\begin{cases}
%(1+q)\text{\ $n$\  odd,}\\
%(qc-q^na)(1-q^{n+1})+(1-q^{n+1}a)(1-q^{n+1}c)\text{\ $n$\  even}.\end{cases}
\left\{ \begin{array}{ll}
\displaystyle 
(\sqrt{a}-\sqrt{c})(1+q^{n+1}\sqrt{a})(1-q^{n}\sqrt{c})-
\sqrt{a}(1-q^{n+1})(1-q^{n}c)
& \ \mathrm{if}\ n\ \mathrm{even},\\[10pt]
\displaystyle 1 & \ \mathrm{if}\ n\ \mathrm{odd},
\end{array} \right.
\label{eq:f2r}
\end{eqnarray}
where the two summations in \eqref{eq:f2} and \eqref{eq:f2r} can be shown
to be equivalent using 
Sears' balanced terminating ${}_4\phi_3$ transformation 
\cite[\href{http://dlmf.nist.gov/17.9.E14}{(17.9.14)}]{NIST:DLMF}.
\end{thm}
\begin{proof}
It suffices to prove \eqref{eq:f2r}. 
Again we use the
contiguous relation \cite[(3.3)]{Krattenthaler1993}
(but now in the other direction!).
This yields
\begin{multline*}
\qhyp43{q^{-n},q^{n+1}a,q^{-1}\sqrt{c},-\sqrt{c}}{c,q\sqrt{a},-\sqrt{a}}{q,q}=
\qhyp43{q^{-n},q^{n+1}a,\ppm\sqrt{c}}{qc,q\sqrt{a},-\sqrt{a}}{q,q}\\
-\frac{\sqrt{c}(1-q\sqrt{c})(1-q^{-n})(1-q^{n+1}a)}
{(1-\sqrt{c})(1-qc)(1-q\sqrt{a})(1+\sqrt{a})}
\qhyp43{q^{1-n},q^{n+2}a,\sqrt{c},-q\sqrt{c}}{q^2c,q^2\sqrt{a},-q\sqrt{a}}{q,q}.
\end{multline*}
Now the two $_4\phi_3$ series on the right-hand side can be
evaluated by \eqref{eq:nn2} and, inductively, by an instance of
\eqref{eq:f2r}, respectively.
The two terms can be combined to the right-hand side in \eqref{eq:f2r}.
\end{proof}

\begin{rem}
Since all summations given in this section are given in terms of balanced ${}_4\phi_3$'s they have corresponding representations in terms of Askey--Wilson polynomials (see Theorem \ref{AWthm}). When these summations are written in terms of Askey--Wilson polynomials one may be able to ascertain that certain of these summations are equivalent. In particular, Theorem \ref{thmf1} and Theorem \ref{thmf2}, can be seen to be equivalent summations when viewed in terms of their representations in terms of Askey--Wilson polynomials. For instance if one uses the first part of Theorem \ref{thmf1}, makes the replacement $(a,c)\mapsto(a^2b^2,q^{-1}b^2)$ followed by replacing $(a,b)\mapsto(-a,-b)$ then one finds
that the parameters for the corresponding Askey--Wilson polynomials are $(w,a,b,c,d)=(iq^{\frac12},ib,-iqb,-iqa,ia)$. For instance, if one uses the first part of Theorem \ref{thmf2}, makes the replacement $(a,c)\mapsto(a^2b^2,qb^2)$ then one finds that the parameters of the corresponding Askey--Wilson polynomials are $(w,a,b,c,d)=(iq^{\frac12},-iqb,ib,ia,-iqa)$. Because the Askey--Wilson polynomials are invariant under $a,b,c,d$ parameter interchange, we see that both of these summation theorems lead to the same summation formula for Askey--Wilson polynomials, namely Theorem \ref{thmAW38} below. So in this sense, both Theorem \ref{thmf1} and Theorem \ref{thmf2} can be considered to be equivalent.
In the same sense, one can see that Theorem \ref{thm35new} is also equivalent to Theorem \ref{thm36new}.
\end{rem}

\subsection{New terminating 2-balanced and 3-balanced ${}_4\phi_3$ summations}\label{subsec:23bal}

The following terminating 2-balanced ${}_4\phi_3$ summation is not new but due to Wei, Gong and Li \cite[Corollary~2]{WGL2013},
nevertheless we display it in this section, together with a direct proof.
\begin{thm}
\label{thm:n3}
Let $q,a,c\in\CCast$. Then one has the following $q$-quadratic summation for a terminating 2-balanced ${}_4\phi_3$, namely 
\begin{equation}
\qhyp43{q^{-n},q^na,\ppm \sqrt{c}}{\ppm \sqrt{qa},qc}{q,q}=c^{\lfloor\frac{n+1}2\rfloor}
\frac{(q;q^2)_{\lfloor\frac{n+1}2\rfloor}(\frac{qa}{c};q^2)_{\lfloor\frac n2\rfloor}}{(qa;q^2)_{\lfloor\frac n2\rfloor}(qc;q^2)_{\lfloor\frac{n+1}2\rfloor}}.
\label{eq:n3}
\end{equation}
\end{thm}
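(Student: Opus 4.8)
The plan is to prove \eqref{eq:n3} by induction on $n$, following the pattern of the proofs of Theorems~\ref{thm31} and \ref{thm33}. First I would use \eqref{sqPoch} to write the paired numerator and denominator factors as $(\pm\sqrt{c};q)_k=(c;q^2)_k$ and $(\pm\sqrt{qa};q)_k=(qa;q^2)_k$, so that the summand on the left-hand side of \eqref{eq:n3} becomes $\frac{(q^{-n},q^na;q)_k\,(c;q^2)_k}{(q;q)_k\,(qa;q^2)_k\,(qc;q)_k}\,q^k$. The case $n=0$ is trivial; I would then assume the claim up to a fixed $n$ and prove it for $n+1$.

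For the inductive step, because the numerator parameter here is $q^na$ (as in Theorem~\ref{thm31}, and not $q^{n+1}a$ as in Theorem~\ref{thm33}), I would apply \eqref{eq:elid} directly, with $a\mapsto a$, to the quotient $(q^{-n-1},q^{n+1}a;q)_k/(q^{-n},q^na;q)_k$. The ``$1$'' on the right-hand side of \eqref{eq:elid} reproduces, term by term, the $n$-series, which is evaluated by the inductive hypothesis. The remaining contribution carries the factor $1-q^k$, which annihilates the $k=0$ term; reindexing $k\mapsto k+1$ then factors out the prefactor $q\,(1-c)(1-q^na)/\big[(1-qa)(1-qc)\big]$ (together with the coefficient $-q^{-n-1}(1-q^{2n+1}a)/(1-q^na)$ coming from \eqref{eq:elid}, whose factor $1-q^na$ then cancels) and leaves the shifted ${}_4\phi_3$ series $\qhyp43{q^{-n},q^{n+1}a,\pm q\sqrt{c}}{\pm q\sqrt{qa},q^2c}{q,q}$.

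The crux is the evaluation of this correction series. In Theorem~\ref{thm31} the analogous correction was an instance of \eqref{Andrews2} and, crucially, vanished for $n$ odd; here this is no longer the case. The reason is structural: the denominator parameter of \eqref{eq:n3} is $\pm\sqrt{qa}$ rather than $\pm\sqrt{a}$, so after reindexing the denominator becomes $\pm q\sqrt{qa}$, which displaces the numerator parameter $q^{n+1}a$ by one extra power of $q$ relative to \eqref{Andrews2}; consequently \eqref{Andrews2} no longer matches, and a direct check (e.g.\ at $n=1$, where the correction equals $q^2a(1-q)/(1-q^3a)$) shows it is nonzero for odd $n$ as well. I would therefore evaluate the correction as an instance of one of the companion $2$-balanced ${}_4\phi_3$ summations of this family, the cleanest route being a single induction on $n$ carried out simultaneously for all three $2$-balanced summations, each inductive step feeding the correction term of one summation into the closed form of another; an alternative is a double step $n\mapsto n+2$, in which the two corrections combine.

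Once the correction is in closed form, I would substitute it together with the inductive hypothesis, cancel the common $q$-shifted factorials, and verify that the two-term combination collapses to the right-hand side of \eqref{eq:n3}; the split into $n$ even and $n$ odd (the origin of the floor indices $\lfloor\frac{n+1}{2}\rfloor$ and $\lfloor\frac n2\rfloor$) has to be handled separately, exactly as in the even/odd bookkeeping of Theorem~\ref{thm31}. The main obstacle is precisely this \emph{non-vanishing} correction: whereas in Theorem~\ref{thm31} one parity of the correction dropped out, here it contributes for both parities, so it must be carried in full and the resulting coupling among the $2$-balanced summations resolved. I would guard the induction by checking the base cases $n=0,1,2$ explicitly before running the general step.
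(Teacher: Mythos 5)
Your proof is correct in its key computations but takes a genuinely different route from the paper's. The paper proves \eqref{eq:n3} by decomposing in the $c$-direction: it applies \eqref{eq:elid2} to the factor $(1-c)/(1-q^kc)$ that relates the summand of \eqref{eq:n3} to that of the balanced summation \eqref{Andrews2}; the resulting correction series, after the same reindexing $k\mapsto k+1$ that you perform, is itself an instance of \eqref{Andrews2} (with $(n,a,c)\mapsto(n-1,q^2a,q^2c)$), so both pieces are evaluated by the already-known balanced sum, and since \eqref{Andrews2} vanishes for odd order, exactly one piece survives for each parity of $n$ --- this is where the floor indices come from, with no genuine two-term recombination. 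You instead decompose in the $n$-direction via \eqref{eq:elid}, as in Theorems~\ref{thm31} and~\ref{thm33}, and your bookkeeping is accurate: the correction series $\qhyp43{q^{-n},q^{n+1}a,\pm q\sqrt{c}}{\pm q\sqrt{qa},q^2c}{q,q}$, the prefactor $-q^{-n}(1-c)(1-q^{2n+1}a)/[(1-qa)(1-qc)]$, your observation that \eqref{Andrews2} no longer matches, and your check that the correction equals $q^2a(1-q)/(1-q^3a)$ at $n=1$ are all correct. Where you are vaguer than necessary: the correction is precisely the instance $(n,a,c)\mapsto(n,qa,q^2c)$ of \eqref{eq:n4}, and since the paper proves Theorem~\ref{thm:n4} independently of Theorem~\ref{thm:n3} (by induction from \eqref{eq:elid2} and \eqref{eq:n1}), you may simply cite it --- no simultaneous induction is needed. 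That is fortunate, because your proposed closure is not guaranteed as stated: the $n$-step correction of \eqref{eq:n3} is an \eqref{eq:n4} instance, and that of \eqref{eq:n4} is an \eqref{eq:n8} instance, but the $n$-step correction of \eqref{eq:n8} is $\qhyp43{q^{-n},q^{n}a,\pm q^2\sqrt{c}}{\pm q^2\sqrt{a},q^2c}{q,q}$, which lies outside the listed family, so a purely $n$-directional simultaneous induction over the three $2$-balanced summations would stall there. With Theorem~\ref{thm:n4} cited, your recursion reproduces the stated right-hand side (e.g.\ at $n=1,2$) and the remaining recombination is routine; the paper's $c$-directional route simply buys a cleaner endgame, since parity vanishing of \eqref{Andrews2} eliminates one term automatically, whereas your correction contributes for both parities and must be carried in full, exactly as you anticipated.
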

\begin{proof}
This result can be proved by induction, using the simple decomposition
\begin{equation}\label{eq:elid2}
\frac {(1-c)}{(1-q^kc)}=1-c\frac{(1-q^k)}{(1-q^kc)}
\end{equation}
and the balanced $_4\phi_3$ summation in \eqref{Andrews2}. The details are routine and are omitted.
\end{proof}
\noindent We now provide two new 2-balanced ${}_4\phi_3$ summations.
\begin{thm}
\label{thm:n4}
Let $q,a,c\in\CCast$. Then one has the following $q$-quadratic summation for a terminating 2-balanced ${}_4\phi_3$, namely 
\begin{equation}
\qhyp43{q^{-n},q^na,\ppm \sqrt{c}}{\ppm q\sqrt{a},c}{q,q}=(q^na)^n\Big(\frac{q^{-2n}c}{a^2}\Big)^{\lfloor\frac n2\rfloor}
\frac{(1-a)}{(1-aq^{2n})}
\frac{(q;q^2)_{\lfloor\frac{n+1}2\rfloor}(\frac{q^2a}{c};q^2)_{\lfloor\frac n2\rfloor}}{(a;q^2)_{\lfloor\frac{n+1}2\rfloor}(qc;q^2)_{\lfloor\frac n2\rfloor}}.
\label{eq:n4}
\end{equation}
\end{thm}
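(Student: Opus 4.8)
The plan is to prove \eqref{eq:n4} by induction on $n$, exactly in the spirit of the proofs of Theorems~\ref{thm31} and \ref{thm33}, carrying the even and odd cases in parallel. The base cases $n=0$ (both sides equal $1$) and $n=1$ (both sides equal $qa(1-q)/(1-q^2a)$) are immediate, so assume the formula holds up to a fixed $n$ and write $S_n$ for the terminating ${}_4\phi_3$ on the left of \eqref{eq:n4}.

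For the inductive step I would feed the elementary identity \eqref{eq:elid} into $S_{n+1}$. The only $n$-dependence of the $k$-th term of $S_{n+1}$ relative to that of $S_n$ is the ratio $\frac{(1-q^{-n-1})(1-q^{n+k}a)}{(1-q^{-n-1+k})(1-q^na)}$, so applying \eqref{eq:elid}, using $(1-q^k)/(q;q)_k=1/(q;q)_{k-1}$ and $(q^{-n};q)_k/(1-q^{-n-1+k})=(q^{-n};q)_{k-1}$, and shifting $k\mapsto k+1$ should yield the clean recurrence
\[
S_{n+1}=S_n-q^{-n}\frac{1-q^{2n+1}a}{1-q^2a}\,\qhyp43{q^{-n},q^{n+1}a,\pm q\sqrt c}{\pm q^2\sqrt a,qc}{q,q}
\]
(I have checked this recurrence directly against the closed form for small $n$). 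One then substitutes the inductive hypothesis for $S_n$ and the value of the correction ${}_4\phi_3$, splits according to the parity of $n$, and simplifies the $q$-shifted factorials to recover the right-hand side of \eqref{eq:n4}.

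The main obstacle is the evaluation of the correction series $\qhyp43{q^{-n},q^{n+1}a,\pm q\sqrt c}{\pm q^2\sqrt a,qc}{q,q}$. Unlike the situation in Theorem~\ref{thm31}, where the analogous correction was literally an instance of Andrews' summation \eqref{Andrews2}, here the lower pair $\pm q\sqrt a$ sits a \emph{half-step} away (in base $q^2$) from the pair $\pm\sqrt{qa}$ that Andrews' formula produces, so \eqref{Andrews2} does not apply verbatim. This is precisely the discrepancy that separates \eqref{eq:n1} from \eqref{eq:n2} and that was bridged there by Sears' transformation \cite[(17.9.14)]{NIST:DLMF}; the most promising route is therefore to evaluate the correction by the same Sears maneuver (after, if necessary, a contiguous reduction of the $2$-balanced series to a Saalsch\"utzian one) and then invoke \eqref{Andrews2} or \eqref{eq:n1}. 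An equivalent packaging, which sidesteps the induction on $n$, is to compare $S_n$ directly with the already-established \eqref{eq:n1} --- with which it shares the lower pair $\pm q\sqrt a$ and the prefactor $(1-a)/(1-q^{2n}a)$ --- via the clean decomposition $\frac{1-q^kc}{1-q^{2k}c}=1-\frac{q^kc(1-q^k)}{1-q^{2k}c}$; the price is that the resulting correction is a ${}_4\phi_3$ of argument $q^2$ rather than $q$, which must then be summed separately. Either way, controlling this single correction term, together with the even/odd bookkeeping, is where essentially all the work lies.
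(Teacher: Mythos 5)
Your recurrence is correct --- the split \eqref{eq:elid} does yield
\begin{equation*}
S_{n+1}=S_n-q^{-n}\,\frac{1-q^{2n+1}a}{1-q^2a}\,\qhyp43{q^{-n},q^{n+1}a,\pm q\sqrt{c}}{\pm q^2\sqrt{a},qc}{q,q},
\end{equation*}
and your base cases check out --- but the proposal has a genuine gap exactly at the point you flag, and the repair you sketch cannot succeed. The split \eqref{eq:elid} in the pair $(q^{-n},q^na)$ \emph{preserves} the balance defect, so your correction series is again 2-balanced; Sears' transformation applies to balanced (Saalsch\"utzian) series, so it is simply unavailable, and the ``contiguous reduction of the 2-balanced series to a Saalsch\"utzian one'' you mention in passing is the entire difficulty, not a preliminary. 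More decisively, the correction provably has no single-product evaluation: it is precisely the $a\mapsto q^2a$ instance of the esoteric summation \eqref{eq:n8} (stated just after Theorem~\ref{thm:n4}), whose right-hand side splits by parity and carries unfactorable quadratic factors such as $c(1+q^{2n-1}a)-q^{n-2}a(1+q)$. (Check at $n=1$: the correction equals $-q(1-q)\bigl[c(1+q^3a)-qa(1+q)\bigr]/\bigl((1-q^4a)(1-qc)\bigr)$, which is not a product of the required shape, though with this value your recurrence does reproduce \eqref{eq:n4} at $n=2$.) Hence no Sears maneuver ending in a single application of \eqref{Andrews2} or \eqref{eq:n1} can evaluate it; any working route must itself produce a two-term evaluation, i.e.\ re-prove \eqref{eq:n8}, which is as hard as the theorem at hand. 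You \emph{could} close your induction by quoting \eqref{eq:n8} with $a\mapsto q^2a$ --- the paper proves \eqref{eq:n8} from \eqref{eq:n2} independently of Theorem~\ref{thm:n4}, so there is no circularity --- but that is a much heavier input, and the parity bookkeeping needed to telescope the esoteric factors is substantial. Your alternative packaging, comparison with \eqref{eq:n1} via $\frac{1-q^kc}{1-q^{2k}c}=1-\frac{q^kc(1-q^k)}{1-q^{2k}c}$, likewise bottoms out in an unsummed mixed-base series, as you concede.

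The paper's proof avoids all of this by peeling in the $c$-slot rather than the $n$-slot: it proceeds by induction using the $c\mapsto c/q$ case of \eqref{eq:elid2}, i.e.\ $\frac{1}{(c;q)_k}=\frac{1}{(c/q;q)_k}-\frac{c}{q}\,\frac{1-q^k}{(c/q;q)_{k+1}}$, together with the $c\mapsto c/q^2$ instance of \eqref{eq:n1}. The crucial structural point is that this decomposition \emph{lowers} the balance defect by one: the 2-balanced left-hand side of \eqref{eq:n4} becomes the balanced series $\qhyp43{q^{-n},q^na,\pm\sqrt{c}}{\pm q\sqrt{a},c/q}{q,q}$ --- exactly \eqref{eq:n1} with $c\mapsto c/q^2$ --- minus a correction which, after the shift $k\mapsto k+1$, is again balanced and lands on the parameter grid of Theorem~\ref{thm31} (one finds an instance of \eqref{eq:n1} with $(n,a)\mapsto(n-1,q^2a)$), so every piece has a known closed form and only routine parity bookkeeping remains. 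The moral your attempt misses: for these $k$-balanced sums the inductive split must act on the parameter responsible for the excess balance ($c$, via \eqref{eq:elid2}), not on the terminating pair $(q^{-n},q^na)$ via \eqref{eq:elid}, since the latter traps you inside the 2-balanced family, where, as \eqref{eq:n8} shows, no clean product evaluation exists.
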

\begin{proof}
The proof is similar to that of Theorem~\ref{thm:n3}.
One proceeds by induction and uses the $c\mapsto c/q$ case of \eqref{eq:elid2},
together with the inductive hypothesis and the $c\mapsto c/q^2$ instance of \eqref{eq:n1}.
\end{proof}
%The rhs of \eqref{eq:n8} to be corrected!
\begin{thm}
\label{thm:310}
Let $q,a,c\in\CCast$. Then one has the following $q$-quadratic summation for a terminating 2-balanced ${}_4\phi_3$, namely 
\begin{eqnarray}
&&\hspace{-2cm}\qhyp43{q^{-n},q^{n-1}a,\ppm q\sqrt{c}}{\ppm q\sqrt{a},qc}{q,q}=
\frac{(-1)^n\,q^n\,c^{\lfloor\frac n2\rfloor}\,(1-a)}{(1-q^{2n}a)(1-q^{2n-2}a)}
\frac{(q;q^2)_{\lfloor\frac{n+1}2\rfloor}(\frac ac;q^2)_{\lfloor\frac n2\rfloor}}{(a;q^2)_{\lfloor\frac n2\rfloor}(qc;q^2)_{\lfloor\frac{n+1}2\rfloor}}\notag\\
%\times\begin{cases}c(1+q^{2n-1}a)-q^{n-2}a(1+q)&\text{$n$ odd,}\\
%(1+q^{2n-1}a)-q^{n-2}a(1+q)&\text{$n$ even}.\end{cases}&
&&\hspace{3.5cm}\times\left\{ \begin{array}{ll}
\displaystyle c(1+q^{2n-1}a)-q^{n-2}a(1+q) & \ \mathrm{if}\ n\ \mathrm{odd},\\[5pt]
\displaystyle 
(1+q^{2n-1}a)-q^{n-2}a(1+q)
& \ \mathrm{if}\ n\ \mathrm{even}.
\end{array} \right.
\label{eq:n8}
\end{eqnarray}
\end{thm}

\begin{proof}
The proof uses induction, the $c\mapsto q^{n-1}a$ case of \eqref{eq:elid2},
the inductive hypothesis and \eqref{eq:n2}.
 \end{proof}

\noindent Similar to \eqref{eq:n5}, the summation in \eqref{eq:n8} can be viewed as esoteric.
\medskip

\noindent Further, we have derived
two new 3-balanced terminating ${}_4\phi_3$ summations. The following summation is somewhat esoteric but possesses a complete product evaluation for odd $n$.

\begin{thm}
\label{thm:n7}
Let $q,a,c\in\CCast$. Then one has the following $q$-quadratic summation for a terminating 3-balanced ${}_4\phi_3$, namely
\begin{eqnarray}
&&\hspace{0cm}\qhyp43{q^{-n},q^{n-1}a,\ppm \sqrt{c}}{\ppm q\sqrt{a},c}{q,q}=
\frac{c^{\lfloor\frac{n+1}2\rfloor}}{(c-a)(1-q^{2n}a)(1-q^{2n-2}a)}
\frac{(q;q^2)_{\lfloor\frac{n+1}2\rfloor}(\frac ac;q^2)_{\lfloor\frac n2\rfloor}}{(q^2a;q^2)_{\lfloor\frac{n+1}2\rfloor}(qc;q^2)_{\lfloor\frac n2\rfloor}}\notag\\
%\times\begin{cases}q^{n-1}a(1+q)(1-q^{n+1}a)(1-q^{n-1}a)(1-q^{n-1}\frac ac)&\text{$n$ odd,}\\
%(1-q^na)\big(q^{2n-2}a(a-c)(1-q^{2n}a)+(c-q^na)(1+q^{2n-1}a)(1-q^{n-1}a)\big)&\text{$n$ even}.\end{cases}&
&&\hspace{1.0cm}\times\left\{ \begin{array}{ll}
\displaystyle 
(1-q^na)\big(q^{2n-2}a(a-c)(1-q^{2n}a)+(c-q^na)(1+q^{2n-1}a)(1-q^{n-1}a)\big)
& \ \mathrm{if}\ n\ \mathrm{even},\\[5pt]
\displaystyle q^{n-1}a(1+q)(1-q^{n+1}a)(1-q^{n-1}a)(1-q^{n-1}\tfrac ac) & \ \mathrm{if}\ n\ \mathrm{odd}.
\end{array} \right.
\label{eq:n7}
\end{eqnarray}
\end{thm}

\begin{proof}
The proof uses induction, the $c\mapsto q^{n-1}a$ case of \eqref{eq:elid2},
the inductive hypothesis and Theorem~\ref{thm:n4}.
\end{proof}

\noindent Some further $k$-balanced $_4\phi_3$ summations, closely related (more specifically, contiguous)
to the ones in the subsection, are given by Wei, Gong and Li in \cite[Corollaries 3 and 6]{WGL2013}. The $_4\phi_3$ summations in this subsection that do not completely factorize (typically
depending on the parity of $n$)
can be specialized such that the right-hand sides do factorize into closed form. There may be different ways
to achieve this. We conclude this subsection with two such examples.
The following summation for a terminating 2-balanced ${}_4\phi_3$ series arises from Theorem~\ref{thm:310}
directly by the specialization $a\mapsto -q^{1-2n}$.
It is easy to see that in this case
the right-hand side completely factorizes into closed form without division into cases.
\begin{cor}
\label{cor:310s}
Let $q,c\in\CCast$. Then one has the following $q$-quadratic summation for a terminating 2-balanced ${}_4\phi_3$, namely
\begin{equation}
\qhyp43{\ppm q^{-n},\ppm q\sqrt{c}}{\ppm {\mathrm i}\,q^{\frac32-n},qc}{q,q}=
(-1)^n\frac{(1+q^{1-2n})}{(1+q)}
\frac{(q;q^2)_{\lfloor\frac{n+1}2\rfloor}(-q^{1+2\lfloor\frac{n+1}2\rfloor}c;q^2)_{\lfloor\frac n2\rfloor}}{(qc;q^2)_{\lfloor\frac{n+1}2\rfloor}(-q^{1+2\lfloor\frac{n+1}2\rfloor};q^2)_{\lfloor\frac n2\rfloor}}.
\label{eq:310s}
\end{equation}
\end{cor}

The following summation for a terminating 3-balanced ${}_4\phi_3$ series arises from Theorem~\ref{thm:n7}
directly by the specialization $a\mapsto -q^{1-2n}$.
Also in this case, the right-hand side completely factorizes into closed form without division into cases.
\begin{cor}
\label{cor:n6}
Let $q,c\in\CCast$. Then one has the following $q$-quadratic summation for a terminating 3-balanced ${}_4\phi_3$, namely
\begin{equation}
\qhyp43{\ppm q^{-n},\ppm \sqrt{c}}{\ppm {\mathrm i}\,q^{\frac32-n},c}{q,q}=
(-1)^n\frac{(1+q^{1-2n})}{(1+q)}
\frac{(q;q^2)_{\lfloor\frac{n+1}2\rfloor}(-q^{1+2\lfloor\frac n2\rfloor}c;q^2)_{\lfloor\frac n2\rfloor}}{(qc;q^2)_{\lfloor\frac n2\rfloor}(-q^{1+2\lfloor\frac{n+1}2\rfloor};q^2)_{\lfloor\frac n2\rfloor}}.
\label{eq:n6}
\end{equation}
\end{cor}

\begin{rem}
We believe that to find direct proofs of Corollaries~\ref{cor:310s} and \ref{cor:n6},
say by induction, would be quite a challenge, even though
terminating summations such as the summations in \eqref{eq:310s} and \eqref{eq:n6}
can principally be proved in an automatic fashion
using computer algebra with implementations of well-known algorithms (and their $q$-analogues),
such as those due to Gosper, Zeilberger, and Petkov{\v{s}}ek (cf.\ \cite{PauleSchorn,PetWilfZeil}).
\end{rem}

\subsection{$q$-Quadratic special values for Askey--Wilson polynomials}

\noindent We now derive ten $q$-quadratic special values for Askey--Wilson polynomials $p_n(x;a,b,c,d|q)$:~(1) 5 with argument $x=0$; (2) 3 with argument $x=\frac{\mathrm i}{2}(q^\frac12\!-\!q^{-\frac12})$; 
(3) 1 with argument $x=\frac{1}{2}(q^\frac14\!+\!q^{-\frac14})$; and (4) 1 with argument $x=\frac12(qa\!+\!(qa)^{-1})$.

\begin{rem}Note that we arrange the following special values such that closed-form representations are given separately for $n$ even with $q$-shifted factorial index and powers given as $\tfrac12 n$; and $n$ odd with $q$-shifted factorial index and powers given as $\tfrac12(n-1)$. This way, when these special values are applied to linear and bilinear generating functions such as the Ismail--Wilson generating function, after one splits the series into contributions over $n$ even and $n$ odd, one is already in a form suitable to derive nonterminating basic hypergeometric representations   which correspond to parity values.
\end{rem}

\medskip

\noindent Corresponding to 
Bailey's summation \eqref{newsum3} (and equivalently \eqref{Andrews2}, \eqref{eq:Jain}), one has the following $q$-quadratic special value for Askey--Wilson polynomials.
\medskip
\begin{thm}
Let $n\in\N_0$, $q,a,b\in\CCast$. Then
\begin{eqnarray}
&&\hspace{-4.7cm}p_n(0;\ppm{\mathrm i}a,\ppm{\mathrm i}b|q)=
\left\{ \begin{array}{ll}
\displaystyle (-1)^{\frac{n}{2}}\frac{(q,a^2,b^2,\ppm ab,\ppm qab;q^2)_{\frac{n}{2}}}{(a^2b^2;q^2)_{\frac{n}{2}}} & \ \mathrm{if}\ n\ \mathrm{even},\\[10pt]
\displaystyle 
0
& \ \mathrm{if}\ n\ \mathrm{odd}.
\end{array} \right.
\label{Bailey}
\end{eqnarray}
\end{thm}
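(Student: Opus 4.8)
The plan is to reduce $p_n(0;\pm ia,\pm ib|q)$ to the single terminating balanced ${}_4\phi_3$ that is evaluated in Bailey's summation \eqref{newsum3}. Since $x=\tfrac12(w+w^{-1})=0$ forces $w=\pm i$, I would fix $w=i$ (the value of $p_n$ depends only on $x$, so the choice is immaterial), and then expand the polynomial using the representation \eqref{aw:def3}, assigning the four Askey--Wilson parameters as $(a,b,c,d)\mapsto(ia,ib,-ia,-ib)$ so that the two parameters carried by $w^{-1}$ are $-ia$ and $-ib$. The virtue of this particular labelling is that $w^2=-1$ collapses every product cleanly: in the prefactor $ab\mapsto-ab$, $cw^{-1}\mapsto-a$, $dw^{-1}\mapsto-b$, while in the series $aw\mapsto-a$, $bw\mapsto-b$, $q^{1-n}/(cd)\mapsto-q^{1-n}/(ab)$, and $\tfrac{q^{1-n}}{c}w\mapsto-q^{1-n}/a$, $\tfrac{q^{1-n}}{d}w\mapsto-q^{1-n}/b$.

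This produces the representation
\[
p_n(0;\pm ia,\pm ib|q)=i^n\,(-ab,-a,-b;q)_n\,
\qhyp43{q^{-n},-a,-b,-\frac{q^{1-n}}{ab}}{-ab,-\frac{q^{1-n}}{a},-\frac{q^{1-n}}{b}}{q,q},
\]
and I would then observe that the ${}_4\phi_3$ on the right is precisely the $(a,b)\mapsto(-a,-b)$ instance of \eqref{newsum3}. Invoking that summation gives the vanishing for odd $n$ at once (the prefactor is finite for generic parameters), and for even $n$ it yields $\tfrac{(q,a^2,b^2;q^2)_{n/2}(ab;q)_n}{(-a,-b;q)_n(a^2b^2;q^2)_{n/2}}$.

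What remains is purely the reconciliation of this expression with the claimed closed form, and this is where the real bookkeeping lies. First the factor $(-a,-b;q)_n$ cancels against the matching factors in the prefactor, leaving $i^n(-ab;q)_n(ab;q)_n(q,a^2,b^2;q^2)_{n/2}/(a^2b^2;q^2)_{n/2}$; then \eqref{sqPoch} turns $(-ab;q)_n(ab;q)_n=(\pm ab;q)_n$ into $(a^2b^2;q^2)_n$, and $i^n=(-1)^{n/2}$ for even $n$. The main obstacle is to recognize that the factor $(a^2b^2;q^2)_n$ so obtained is exactly the composite $(\pm ab,\pm qab;q^2)_{n/2}$ appearing in the statement. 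I expect to settle this by two further applications of \eqref{sqPoch} at base $q^2$, namely $(\pm ab;q^2)_{n/2}=(a^2b^2;q^4)_{n/2}$ and $(\pm qab;q^2)_{n/2}=(q^2a^2b^2;q^4)_{n/2}$, whose product recombines the even- and odd-indexed factors back into $(a^2b^2;q^2)_n$. Assembling these identifications gives the asserted evaluation, with the even/odd dichotomy inherited directly from \eqref{newsum3}.
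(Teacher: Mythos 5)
Your proof is correct and takes essentially the same route as the paper: both evaluate the Askey--Wilson polynomial at $x=0$ (so $w=\pm i$) through a standard terminating balanced ${}_4\phi_3$ representation, identify that series with Bailey's summation \eqref{newsum3}, and reassemble the $q$-shifted factorials via \eqref{sqPoch}, the only cosmetic difference being that you use \eqref{aw:def3} with $w=i$ (the representation the paper itself flags as amenable to \eqref{newsum3}) while the paper's proof invokes \eqref{aw:def2} with $w=-i$ followed by $i\mapsto-i$. Your bookkeeping, including the recombination $(\pm ab,\pm qab;q^2)_{n/2}=(a^2b^2;q^2)_n$ and the finiteness of the prefactor in the odd case, checks out.
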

\begin{proof}
Starting with \eqref{newsum3} then comparing with \eqref{aw:def2} with $(w,a,b,c,d)\mapsto(-{\mathrm i},{\mathrm i}a,{\mathrm i}b,-{\mathrm i}a,-{\mathrm i}b)$, then replacing ${\mathrm i}\mapsto-{\mathrm i}$. Let $m\in\N_0$. Finally
replacing $n=2m$ for $n$ even, and $n=2m+1$ for $n$ odd, followed by applying standard identities for $q$-shifted factorials, and solving for $m$ in terms of $n$ completes the proof.
\end{proof}
\noindent Corresponding to our new balanced ${}_4\phi_3$ summation given in Theorem \ref{thm31} we derive the following $q$-quadratic special value for Askey--Wilson polynomials.
\begin{thm}
\label{thm318}
Let $n\in\N_0$, $q,a,b\in\CCast$. Then the Askey--Wilson polynomials have the following $q$-quadratic special value,
\begin{eqnarray}
&&\hspace{-0.7cm}p_n(0;\ppm{\mathrm i}a,{\mathrm i}b,-{\mathrm i}qb|q)\nonumber\\
&&\hspace{0.0cm}=(-{\mathrm i})^nb^{2\lfloor\frac{n+1}{2}\rfloor-n}
\frac{(qb^2,\ppm ab;q)_n(q,a^2;q^2)_{\lfloor\frac{n+1}{2}\rfloor}}{(qb^2,a^2b^2;q^2)_{\lfloor\frac{n+1}{2}\rfloor}}\label{firstit}\\
&&=
\left\{ 
\begin{array}{ll}
\displaystyle 
(-1)^{\frac{n}{2}}\frac{(q,a^2,q^2b^2,\ppm ab,\ppm qab;q^2)_{\frac{n}{2}}}{(a^2b^2;q^2)_{\frac{n}{2}}} &
\qquad\mathrm{if}\ n\ \mathrm{even},
\\[5pt]
\displaystyle 
-{\mathrm i}(-1)^{\frac{n-1}{2}}(1-q)(1-a^2)b
\frac{(q^3,q^2a^2,q^2b^2,\ppm qab,\ppm q^2ab;q^2)_{\frac{n-1}{2}}}{(q^2a^2b^2;q^2)_{\frac{n-1}{2}}}&
\qquad\mathrm{if}\ n\ \mathrm{odd}.
\end{array} 
\right.
\label{eqthm318}
\end{eqnarray}
\end{thm}
\begin{proof}
Replace 
\[
(a,c)\mapsto(a^2b^2,b^2)
\]
in either \eqref{eq:n2}, \eqref{eq:n1} then setting
\[
(w,a,b,c,d)\mapsto ({\mathrm i},{\mathrm i}a,-{\mathrm i}a,{\mathrm i}b,-{\mathrm i}qb)
\]
in \eqref{aw:def1} produces \eqref{firstit}.
Then replacing $n=2m$ for $n$ even, and $n=2m+1$ for $n$ odd, followed by standard identities for $q$-shifted factorials, finally solving for $m$ in terms of $n$ completes the proof.
\end{proof}
\noindent Corresponding to our new balanced ${}_4\phi_3$ summation given in Theorem \ref{thm34new} we derive the following $q$-quadratic special value for Askey--Wilson polynomials.
\begin{thm}
\label{thm319}
Let $n\in\N_0$, $q,a,b\in\CCast$. Then the Askey--Wilson polynomials have the following $q$-quadratic special value,
\begin{eqnarray}
&&\hspace{-0.2cm}p_n(0;{\mathrm i}a,-{\mathrm i}qa,{\mathrm i}b,-{\mathrm i}qb|q)\nonumber\\
&&=
\left\{ 
\begin{array}{ll}
\displaystyle 
(-1)^{\frac{n}{2}}\frac{(q,q^2a^2,q^2b^2,qab,-ab,q^2ab,-qab;q^2)_{\frac{n}{2}}}{(q^2a^2b^2;q^2)_{\frac{n}{2}}} &
\qquad\mathrm{if}\ n\ \mathrm{even},
\\[5pt]
\displaystyle 
-{\mathrm i}(-1)^{\frac{n-1}{2}}(1-q)(a+b)(1-qab)
\frac{(q^3,q^2a^2,q^2b^2,q^2ab,-qab,q^3ab,-q^2ab;q^2)_{\frac{n-1}{2}}}{(q^2a^2b^2;q^2)_{\frac{n-1}{2}}}&
\qquad\mathrm{if}\ n\ \mathrm{odd}.
\end{array} 
\right.
\label{eqthm319}
\end{eqnarray}
\end{thm}
\begin{proof}
Replace 
\[
(a,c)\mapsto(a^2b^2,a^2)
\]
then replace $(a,b)\mapsto(-a,-b)$ 
in either \eqref{eq:nn2}, \eqref{eq:nn2r} then setting
\[
(w,a,b,c,d)\mapsto ({\mathrm i},{\mathrm i}qa,-{\mathrm i}a,{\mathrm i}qb,-{\mathrm i}b)
\]
in \eqref{aw:def1} and replacing $n=2m$ for $n$ even, and $n=2m+1$ for $n$ odd, followed by standard identities for $q$-shifted factorials, finally solving for $m$ in terms of $n$ completes the proof.
\end{proof}
\noindent Corresponding to Andrews' $q$-analogue of the terminating version of Whipple's ${}_3F_2(1)$ summation \eqref{Andrews43} (cf.\
\cite[\href{http://dlmf.nist.gov/16.4.E7}{(16.4.7)}]{NIST:DLMF}), one has the following $q$-quadratic special value for Askey--Wilson polynomials.
\begin{thm}
\label{thm320}
Let $n\in\N_0$, $q,a,b\in\CCast$. Then
\begin{eqnarray}
&&\hspace{-0.80cm}p_n(0;{\mathrm i}a,\tfrac{{\mathrm i}q}{a},{\mathrm i}b,\tfrac{{\mathrm i}q}{b}|q)\nonumber\\
&&\hspace{0.0cm}=\left\{ \begin{array}{ll}
\displaystyle (-1)^{\frac{n}{2}}(-q,-q^2,-ab,-\tfrac{q^2}{ab},-\tfrac{qa}{b},-\tfrac{qb}{a};q^2)_{\frac{n}{2}} & \ \mathrm{if}\ n\ \mathrm{even},\\[10pt]
\displaystyle 
-\tfrac{{\mathrm i}q}{b}(-1)^{\frac{n-1}{2}}(1+q)(1+\tfrac{ab}{q})(1+\tfrac{b}{a})(-q^2,-q^3,-qab,-\tfrac{q^3}{ab},-\tfrac{q^2a}{b},-\tfrac{q^2b}{a};q^2)_{\frac{n-1}{2}}
& \ \mathrm{if}\ n\ \mathrm{odd}.
\end{array} \right.
\label{ev:Andrews}
\end{eqnarray}
\end{thm}
\begin{proof}
Starting with \eqref{Andrews43} then comparing with \eqref{aw:def1} with $(w,a,b,c,d)\mapsto(-{\mathrm i},{\mathrm i}a,{\mathrm i}q/a,-{\mathrm i}b/a,-{\mathrm i}qa/b)$ followed by $i\mapsto -{\mathrm i}$ and $b\mapsto ab$ and $b\mapsto -b$. Let $m\in\N_0$. Replacing $n=2m$ for $n$ even, and $n=2m+1$ for $n$ odd, followed by applying standard identities for $q$-shifted factorials, and solving for $m$ in terms of $n$ completes the proof.
\end{proof}

\noindent Corresponding to our new balanced esoteric ${}_4\phi_3$ summation given in Theorem \ref{thm33} we derive the following $q$-quadratic special value for Askey--Wilson polynomials.

\begin{thm}
\label{thm44}
Let $n\in\N_0$, $q,a,b\in\CCast$. Then the Askey--Wilson polynomials have the following esoteric $q$-quadratic special value,
\begin{eqnarray}
&&\hspace{-0.5cm}p_n(0;\ppm{\mathrm i}a,{\mathrm i}b,-{\mathrm i}q^2b|q)\nonumber\\
&&\hspace{-0.2cm}=
\left\{ 
\begin{array}{ll}
\displaystyle 
\frac{(-1)^{\frac{n}{2}}(a^2,q^2b^2,\ppm ab,\ppm qab;q^2)_{\frac{n}{2}}}{(1-q^2b^2)(1-a^2b^2)(q^2a^2b^2;q^2)_{\frac{n}{2}}}&
\\[9pt] 
\displaystyle \hspace{0.25cm}\times\left(\frac{(1-qb^2)(1-qa^2b^2)(q,q^3b^2,q^3a^2b^2;q)_{\frac{n}{2}}}{(qb^2,qa^2b^2;q^2)_{\frac{n}{2}}}+\frac{qb^2(1-q)(1-\frac{a^2}{q})(q^3,qa^2;q^2)_{\frac{n}{2}}}{(\frac{a^2}{q};q^2)_{\frac{n}{2}}}\right)
&
\!\!\! \mathrm{if}\ n\ \mathrm{even},
\\[15pt]
\displaystyle -{\mathrm i}(-1)^{\frac{n-1}{2}}b(1-q^2)(1-a^2)
\frac{(q^3,q^2a^2,q^4b^2,\ppm qab,\ppm q^2ab;q^2)_{\frac{n-1}{2}}}{(q^2a^2b^2;q^2)_{\frac{n-1}{2}}}&
\!\!\! \mathrm{if}\ n\ \mathrm{odd}.
\end{array} 
\right.
\label{esoterAW}
\end{eqnarray}
\end{thm}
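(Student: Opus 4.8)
The plan is to mirror the derivation of Theorem~\ref{thm43}, but starting from the esoteric summation \eqref{eq:n5} of Theorem~\ref{thm33} in place of the summation of Theorem~\ref{thm31}. The governing observation is that $x=0$ corresponds to $w=i$ (since $\tfrac12(i+i^{-1})=0$), so that in the representation \eqref{aw:def1} the numerator pair $aw^{\pm}$ collapses to a $w$-independent pair $\pm(\,\cdot\,)$, which is exactly the shape of the numerator pair $\pm\sqrt c$ appearing in \eqref{eq:n5}.

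First I would substitute $(a,c)\mapsto(a^2b^2,b^2)$ into \eqref{eq:n5}, turning its argument array into $\qhyp43{q^{-n},q^{n+1}a^2b^2,\pm b}{q^2b^2,\pm ab}{q,q}$. Next I would evaluate the Askey--Wilson polynomial $p_n(0;\pm ia,ib,-iq^2b|q)$ through \eqref{aw:def1}, using the symmetry of the polynomial in its four parameters to place $ib$ in the distinguished first slot and setting $(a,b,c,d,w)\mapsto(ib,ia,-ia,-iq^2b,i)$. A short computation of the resulting $q$-shifted factorials gives $q^{n-1}abcd=q^{n+1}a^2b^2$, the numerator pair $aw^{\pm}=\pm b$, and the three lower parameters $\{\pm ab,\,q^2b^2\}$, so that the ${}_4\phi_3$ produced by \eqref{aw:def1} coincides term-by-term with the substituted \eqref{eq:n5}. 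Since the prefactor $a^{-n}(ab,ac,ad;q)_n$ of \eqref{aw:def1} reduces here to $(ib)^{-n}(\pm ab,q^2b^2;q)_n$, it follows that $p_n(0;\pm ia,ib,-iq^2b|q)$ equals $(ib)^{-n}(\pm ab,q^2b^2;q)_n$ times the right-hand side of \eqref{eq:n5} under this substitution. This is an intermediate single-line evaluation that still carries the parity-dependent bracket of \eqref{eq:n5}.

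Finally I would split into the two residue classes. Writing $n=2m$ and $n=2m+1$ and expressing $\lfloor\tfrac{n+1}2\rfloor,\lfloor\tfrac{n+2}2\rfloor,\lfloor\tfrac n2\rfloor$ in terms of $m$, I would convert the base-$q$ products $(\pm ab,q^2b^2;q)_n$ into base-$q^2$ products via $(x;q)_{2m}=(x,qx;q^2)_m$ together with \eqref{sqPoch}; the power of $i$ in $(ib)^{-n}$ produces the signs $(-1)^{n/2}$ in the even case and the factor $-i$ in the odd case, while the matching powers of $b$ cancel against $c^{\lfloor(n+1)/2\rfloor}=b^{2\lfloor(n+1)/2\rfloor}$. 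Solving $m=\lfloor n/2\rfloor$ then yields \eqref{esoterAW}.

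The main obstacle will be the even-$n$ case. There the bracket in \eqref{eq:n5} is the genuinely two-term, non-factoring expression $(qc-q^na)(1-q^{n+1})+(1-q^{n+1}a)(1-q^{n+1}c)$, so after the substitution $(a,c)\mapsto(a^2b^2,b^2)$ and multiplication by the prefactor it must be reorganized precisely into the two summands inside the large parenthesis of \eqref{esoterAW}. Keeping track of the competing powers of $q$, the $b^2$-scalings, and the shifts between $(a^2;q^2)$- and $(q^2a^2;q^2)$-type products is where the bookkeeping is delicate. By contrast, the odd-$n$ case is routine, since the bracket reduces to the single factor $(1+q)$ and the output factors completely.
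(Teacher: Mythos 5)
Your proposal is correct and follows essentially the same route as the paper's proof: substitute into \eqref{eq:n5}, match against the terminating ${}_4\phi_3$ representation \eqref{aw:def1} at $w=i$ (using the symmetry of the Askey--Wilson polynomial to put $ib$ in the first slot, which makes $q^{n-1}abcd=q^{n+1}a^2b^2$, the upper pair $\pm b$ and the lower parameters $\{\pm ab,\,q^2b^2\}$), then split into the parity classes $n=2m$ and $n=2m+1$ and convert base-$q$ into base-$q^2$ shifted factorials. In fact your substitution $(a,c)\mapsto(a^2b^2,b^2)$ is the one that actually makes the parameters match (as is confirmed by checking, e.g., $p_1(0)=-ib(1-q^2)(1-a^2)$ against the odd case): the paper's stated replacement $(a,c)\mapsto(a^2b^2,a^2)$ appears to be a slip, since no ordering of the parameter multiset $\{\pm ia,\,ib,\,-iq^2b\}$ in \eqref{aw:def1} produces the lower parameters $\{q^2a^2,\pm ab\}$ required by \eqref{eq:n5} under that substitution.
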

\begin{proof}
Replacing $(a,c)\mapsto(a^2b^2,a^2)$ in \eqref{eq:n5} and then comparing with \eqref{aw:def1} using $(w,a,b,c,d)=({\mathrm i},\ppm{\mathrm i}a,{\mathrm i}b,-{\mathrm i}q^2b)$.
Replacing $n=2m$ for $n$ even, and $n=2m+1$ for $n$ odd, followed by applying standard identities for $q$-shifted factorials, and solving for $m$ in terms of $n$ completes the proof.
\end{proof}

\noindent Corresponding to our new balanced ${}_4\phi_3$ summation given in Theorem \ref{thm33new} we derive the following $q$-quadratic special value for Askey--Wilson polynomials with argument on the imaginary axis.
\begin{thm}
\label{thm320new}
Let $n\in\N_0$, $q,a,b\in\CCast$. Then the Askey--Wilson polynomials have the following $q$-quadratic special value,
\begin{eqnarray}
&&\hspace{-0.8cm}p_n(\tfrac{\mathrm i}{2}(q^\frac12-q^{-\frac12});\ppm{\mathrm i}a,\ppm{\mathrm i}b|q)\nonumber\\
&&\hspace{0.5cm}=q^{-\frac{n}{2}}
\left\{ 
\begin{array}{ll}
\displaystyle 
(-1)^{\frac{n}{2}}\frac{(q,qa^2,qb^2,\ppm ab,\ppm qab;q^2)_{\frac{n}{2}}}{(a^2b^2;q^2)_{\frac{n}{2}}} &
\qquad\mathrm{if}\ n\ \mathrm{even},
\\[5pt]
\displaystyle 
-{\mathrm i}(-1)^{\frac{n-1}{2}}(1-q)(1-a^2b^2)
\frac{(q^3,qa^2,qb^2,\ppm qab,\ppm q^2ab;q^2)_{\frac{n-1}{2}}}{(a^2b^2;q^2)_{\frac{n-1}{2}}}&
\qquad\mathrm{if}\ n\ \mathrm{odd}.
\end{array} 
\right.
\label{eqthm320new}
\end{eqnarray}
\end{thm}
\begin{proof}
Replace 
\[
(a,c)\mapsto(a^2b^2/q^2,a^2/q)
\]
in \eqref{eq:nn3} then setting
\[
(w,a,b,c,d)\mapsto ({\mathrm i}q^{\frac12},\ppm{\mathrm i}a,\ppm{\mathrm i}b)
\]
in \eqref{aw:def1} and replacing $n=2m$ for $n$ even, and $n=2m+1$ for $n$ odd, followed by standard identities for $q$-shifted factorials, finally solving for $m$ in terms of $n$ completes the proof.
\end{proof}

\noindent 
Corresponding to our new balanced ${}_4\phi_3$ summation given in Theorem \ref{thm35new} we derive the following $q$-quadratic special value for Askey--Wilson polynomials with argument on the imaginary axis.
\begin{thm}
\label{thm321new}
Let $n\in\N_0$, $q,a,b\in\CCast$. Then the Askey--Wilson polynomials have the following $q$-quadratic special value,
\begin{eqnarray}
&&\hspace{0.05cm}p_n(\tfrac{\mathrm i}{2}(q^\frac12\!-\!q^{-\frac12});\ppm{\mathrm i}a,{\mathrm i}b,-{\mathrm i}qb|q)=q^{-\frac{n}{2}}\nonumber\\
&&\hspace{0.5cm}\times
\left\{ 
\begin{array}{ll}
\displaystyle 
(-1)^{\frac{n}{2}}\frac{(q,qa^2,q^{\frac52}b,qb^2,\ppm ab,\ppm qab;q^2)_{\frac{n}{2}}}{(q^{\frac12}b,a^2b^2;q^2)_{\frac{n}{2}}} &
\quad\mathrm{if}\ n\ \mathrm{even},
\\[5pt]
\displaystyle 
-{\mathrm i}(-1)^{\frac{n-1}{2}}(1-q)(1+q^{\frac12}b)(1-q^{\frac12}a^2b)
\frac{(q^3,qa^2,q^3b^2,\ppm qab,\ppm q^2ab,q^{\frac52}a^2b;q^2)_{\frac{n-1}{2}}}{(q^\frac12a^2b,q^2a^2b^2;q^2)_{\frac{n-1}{2}}}&
\quad\mathrm{if}\ n\ \mathrm{odd}.
\end{array} 
\right.
\label{eqthm321new}
\end{eqnarray}
\end{thm}
\begin{proof}
Replace 
\[
(a,c)\mapsto(a^2b^2,a^2/q)
\]
followed by $b\mapsto-b$ 
in \eqref{eq:nn1} then setting
\[
(w,a,b,c,d)\mapsto ({\mathrm i}q^{\frac12},\ppm{\mathrm i}a,{\mathrm i}b,-{\mathrm i}qb)
\]
in \eqref{aw:def1} and replacing $n=2m$ for $n$ even, and $n=2m+1$ for $n$ odd, followed by standard identities for $q$-shifted factorials, finally solving for $m$ in terms of $n$ completes the proof.
\end{proof}

\noindent One has the following $q$-quadratic special value for Askey--Wilson polynomials which corresponds to Theorem \ref{thmf1}.
\begin{thm}
\label{thmAW38}
Let $n\in\N_0$, $q,a,b\in\CCast$. Then the Askey--Wilson polynomials have the following special value,
\begin{eqnarray}
&&\hspace{0.1cm}p_n(\tfrac{\mathrm i}2(q^\frac12\!-\!q^{-\frac12});{\mathrm i}a,-{\mathrm i}qa,{\mathrm i}b,-{\mathrm i}qb|q)\nonumber\\
&&\hspace{0.5cm}=
\left\{ \!\!
\begin{array}{ll}
\displaystyle 
\frac{(-1)^{\frac{n}{2}}(q,qa^2,qb^2,-ab,\ppm qab,q^2ab;q^2)_{\frac{n}{2}}}{q^{\frac{n}{2}+1}(1+ab)(1-\frac{1}{q^{\frac12}a})(q^2a^2b^2;q^2)_{\frac{n}{2}}}\Biggl(\frac{(1-\frac{q^{\frac12}}{a})(1+qab)(q^\frac52b,-q^3ab;q^2)_{\frac{n}{2}}}{(q^\frac12b,-qab;q^2)_{\frac{n}{2}}}&\\
\hspace{8cm}\displaystyle -\frac{(1-q)(1+q^\frac12b)(q^3,q^3b^2;q^2)_{\frac{n}{2}}}{(q,qb^2;q^2)_{\frac{n}{2}}}\Biggr) 
&\quad\mathrm{if}\ n\ \mathrm{even},
\\[30pt]
\displaystyle -{\mathrm i}q^{-\frac{n}{2}}(-1)^{\frac{n-1}{2}}(1-q)(1+q^\frac12a)(1+q^\frac12b)(1-qab)\\[12pt]
\displaystyle \hspace{6cm}\times \frac{(q^3,q^3a^2,q^3b^2,-qab,\ppm q^2ab,q^3ab;q^2)_{\frac{n-1}{2}}}{(q^2a^2b^2;q^2)_{\frac{n-1}{2}}}&
\quad\mathrm{if}\ n\ \mathrm{odd}.
\end{array} 
\right.
\label{eqthmAW38}
\end{eqnarray}
\end{thm}
\begin{proof}
Start for instance with the first part of Theorem \ref{thmf1}, identify the parameters of the Askey--Wilson polynomials using \eqref{aw:def1} and scale the parameters appropriately. Splitting into even and odd parts completes the proof.
\end{proof}

\noindent One has the following special value for the Askey--Wilson polynomials which follows from the equivalent summations \eqref{GR-3.10.9}, \eqref{GR-3.10.10}.
\begin{thm}
\label{thm321xnew}
Let $n\in\N_0$, $q,a,b\in\CCast$. Then the Askey--Wilson polynomials have the following $q$-quadratic special value,
\begin{eqnarray}
&&\hspace{-0.75cm}p_n(\tfrac12(qa+\tfrac{1}{qa});\ppm q^\frac12,a,b|q)\nonumber\\
&&\hspace{0.5cm}=(qa)^{-n}
\left\{ \!\!
\begin{array}{ll}
\displaystyle 
\frac{(\ppm q^\frac12a,\ppm q^\frac32a,ab,qab,\ppm q^2\sqrt{ab};q^2)_{\frac{n}{2}}}{(\ppm\sqrt{ab};q^2)_{\frac{n}{2}}} &
\quad\mathrm{if}\ n\ \mathrm{even},
\\[5pt]
\displaystyle 
\frac{(1-qa^2)(1-q^2ab)(\ppm q^\frac32a,\ppm q^\frac52a,qab,q^2ab,\ppm q^3\sqrt{ab};q^2)_{\frac{n-1}{2}}}{(\ppm q\sqrt{ab};q^2)_{\frac{n-1}{2}}}&
\quad\mathrm{if}\ n\ \mathrm{odd}.
\end{array} 
\right.
\label{eqthm321xnew}
\end{eqnarray}
\end{thm}
\begin{proof}
Start with \eqref{GR-3.10.9} and convert to an Askey--Wilson polynomial using \eqref{aw:def3}. Breaking the right-hand side into even and odd parts and shifting the parameters accordingly completes the proof.
\end{proof}

\noindent One has the following special value for the Askey--Wilson polynomials which follows directly from the equivalent balanced  summations 
\eqref{eq:BIS-2.2}--\eqref{eq:wx} (see Remark \ref{Rem31}).
%\eqref{eq:g}, \eqref{eq:g2}, \eqref{eq:g3}, or from the equivalent 
%balanced Wei--Wang summation \eqref{eq:wx}.
\begin{thm}
\label{thmWeiWang}
Let $n\in\N_0$, $q,a,b\in\CCast$. Then the Askey--Wilson polynomials have the following special value,
\begin{eqnarray}
&&\hspace{-0.7cm}p_n(\tfrac12(q^\frac14\!+\!q^{-\frac14});a,q^{\frac12}a,b,q^{\frac12}b|q)=q^{-\frac{n}{4}}\nonumber\\
&&\hspace{0.5cm}\times
\left\{ \!\!
\begin{array}{ll}
\displaystyle 
\frac{(-q^\frac12,-q,q^\frac14a,q^\frac34a,q^{\frac14}b,q^{\frac34}b,ab,q^{\frac12}ab,qab,q^\frac32ab;q^2)_{\frac{n}{2}}}{(ab,q^{\frac12}ab;q)_{\frac{n}{2}}} &
\quad\mathrm{if}\ n\ \mathrm{even},
\\[30pt]
\displaystyle (1+q^\frac12)(1-q^\frac14a)(1-q^{\frac14}b)(1-q^{\frac12}ab)\\[6pt]
\hspace{2cm}\displaystyle \times\frac{(-q,-q^{\frac32},q^\frac34a,q^\frac54a,q^\frac34b,q^\frac54b,q^\frac32ab,q^2ab,q^{\frac52}ab;q)_{\frac{n-1}{2}}}{(q^{\frac12}ab;q)_{\frac{n-1}{2}}}&
\quad\mathrm{if}\ n\ \mathrm{odd}.
\end{array} 
\right.
\label{eqthmWeiWang}
\end{eqnarray}
\end{thm}
\begin{proof}
Starting with for instance, the Bressoud--Ismail--Stanton summation  \eqref{eq:BIS-2.2}, let $q^2\mapsto q$ and identify the parameters of the Askey--Wilson polynomials using \eqref{aw:def1}. Splitting into even and odd parts completes the proof.
\end{proof}

\section{Nonterminating product transformations}\label{sec:npt}

\noindent 
Starting from our new $q$-quadratic special values
of the Askey--Wilson polynomials given in Theorem \ref{thm44} and Theorem \ref{thm321new}, we obtain two 4-term transformation formulas for nonterminating products of two ${}_2\phi_1$'s and two 3-term transformation formulas. 
In particular, starting from the esoteric special value
of the Askey--Wilson polynomials \eqref{esoterAW}, one
obtains the following transformations for nonterminating products of two ${}_2\phi_1$'s.

\begin{thm}
\label{thm517}
Let $0<|q|<1$, $a,c\in\CCast$, $|t|<1$. Then
\begin{eqnarray}
&&\hspace{-2.5cm}\qhyp21{-c,q^2c}{q^2c^2}{q,t}\qhyp21{\ppm a}{a^2}{q,-t}=\frac{ct(1+q)}{(1-q^2c^2)}\qhyp43{\ppm qac,\ppm q^2ac}{qa^2,q^3c^2,q^2a^2c^2}{q^2,t^2}\nonumber\\
&&\hspace{0.0cm}+\frac{(1-qc^2)(1-qa^2c^2)}{(1-q^2c^2)(1-a^2c^2)}\qhyp54{q^3a^2c^2,\ppm ac,\ppm qac}{qa^2,qc^2,qa^2c^2,q^2a^2c^2}{q^2,t^2}
\nonumber\\
&&\hspace{0.0cm}
+\frac{qc^2(1-q)(1-\frac{a^2}{q})}{(1-q^2c^2)(1-a^2c^2)}\qhyp54{q^3,\ppm ac,\ppm qac}{q,\frac{a^2}{q},q^3c^2,q^2a^2c^2}{q^2,t^2}.
\end{eqnarray}
\end{thm}
\begin{proof}
Start with the Ismail--Wilson product generating function for Askey--Wilson polynomials \eqref{AWgf}, and replace
$(w,a,b,c,d)\mapsto({\mathrm i},\ppm{\mathrm i}a,{\mathrm i}b,-{\mathrm i}q^2b)$
using Theorem \ref{thm44}. Finally splitting into even and odd terms using \eqref{esoterAW} and simplifying completes the proof.
\end{proof}

\begin{thm}
\label{thm518}
Let $0<|q|<1$, $a,b\in\CCast$, $|t|<1$. Then
\begin{eqnarray}
&&\hspace{-1.0cm}\qhyp21{-a,-b}{-ab}{q,t}\qhyp21{-a,-q^2b}{-q^2ab}{q,-t}=\frac{bt(1+q)(1-a^2)}{(1+ab)(1+q^2ab)}\qhyp43{q^2a^2,q^4b^2,qab,q^2ab}{-q^3ab,-q^4ab,q^2a^2b^2}{q^2,t^2}\nonumber\\
&&\hspace{0.0cm}+\frac{(1-qb^2)(1-qa^2b^2)}{(1-q^2b^2)(1-a^2b^2)}\qhyp65{a^2,q^2b^2,q^3b^2,ab,qab,q^3a^2b^2}{qb^2,-q^2ab,-q^3ab,qa^2b^2,q^2a^2b^2}{q^2,t^2}
\nonumber\\
&&\hspace{0.0cm}
+\frac{qb^2(1-q)(1-\frac{a^2}{q})}{(1-q^2b^2)(1-a^2b^2)}\qhyp65{q^3,a^2,qa^2,q^2b^2,ab,qab}{q,\frac{a^2}{q},-q^2ab,-q^3ab,q^2a^2b^2}{q^2,t^2}.
\end{eqnarray}
\end{thm}
\begin{proof}
Start with the Ismail--Wilson product generating function for Askey--Wilson polynomials \eqref{AWgf}, and replace
$(w,a,b,c,d)\mapsto({\mathrm i},{\mathrm i}a,{\mathrm i}b,-{\mathrm i}a,-{\mathrm i}q^2b)$
using Theorem \ref{thm44}. Finally splitting into even and odd parts using \eqref{esoterAW} and simplifying completes the proof.
\end{proof}

\noindent Starting from Theorem \ref{thm319}, we obtain the following product transformation formula.
\begin{thm}\label{thm43new}
Let $0<|q|<1$, $a,b\in\CCast$, $|t|<1$. Then
\begin{eqnarray}
&&\hspace{-0.8cm}\qhyp21{a,-qa}{qa^2}{q,t}\qhyp21{b,-qb}{qb^2}{q,-t}\nonumber\\
&&\hspace{-0.0cm}=\qhyp43{ab,\ppm qab,-q^2ab}{qa^2,qb^2,q^2a^2b^2}{q^2,t^2}+
\frac{(b-a)(1+qab)t}{(1-qa^2)(1-qb^2)}\qhyp43{qab,\ppm q^2ab,-q^3ab}{q^3a^2,q^3b^2,q^2a^2b^2}{q^2,t^2}.
\end{eqnarray}
\label{newthmfin}
\end{thm}
\begin{proof}
Start with the Ismail--Wilson product generating function for Askey--Wilson polynomials \eqref{AWgf}, and replace
$(w,a,b,c,d)\mapsto({\mathrm i},{\mathrm i}a,-{\mathrm i}qa,{\mathrm i}b,-{\mathrm i}qb)$
using Theorem \ref{thm319}. Finally splitting into even and odd terms using \eqref{eqthm318} and simplifying completes the proof.
\end{proof}

\noindent If one replaces $\{a,b,t\}\mapsto\{q^a,q^b,\tfrac12(1-q^2)z\}$ in Theorem \ref{newthmfin}, then one obtains the following transformation formula for a product of two confluent hypergeometric functions which is a transformation of type \cite[(4.3.5-6), (4.3.13)]{Erdelyi}. Note that \cite[(4.3.13)]{Erdelyi} contains a typo, the numerator element $-a'$ should be $a'$.

\begin{cor}
Let $a,b,z\in\CC$. Then
\begin{eqnarray}
&&\hspace{-0.2cm}\hyp11{a}{2a+1}{z}\hyp11{b}{2b+1}{-z}\nonumber\\
&&\hspace{0.5cm}=\hyp23{\frac{a+b}{2},\frac{a+b+1}{2}}{a+\frac12,b+\frac12,a+b+1}{\frac{z^2}{4}}+\frac{(a-b)z}{(2a+1)(2b+1)}\hyp23{\frac{a+b+1}{2},\frac{a+b+2}{2}}{a+\frac32,b+\frac32,a+b+1}{\frac{z^2}{4}}.
\end{eqnarray}
\end{cor}
\begin{proof}
Start with Theorem \ref{newthmfin} and make the replacements $\{a,b,z\}\mapsto\{q^a,q^b,\tfrac12(1-q^2)z\}$. Then take the $q\to 1^{-}$ limit and  setting $t\mapsto z/2$ completes the proof.
\end{proof}

\noindent If one replaces $b\mapsto a$ then the second term on the right-hand side vanishes and we obtain the following interesting identity
\begin{equation}
\hyp11{a}{2a+1}{t}\hyp11{a}{2a+1}{-t}=\hyp12{a}{a+\frac12,2a+1}{\frac{t^2}{4}},
\end{equation}
which is a special case of \cite[(4.3.5)]{Erdelyi}. Starting from Theorem \ref{thm321new}, we obtain the following.

\begin{thm}
\label{thm515}
Let $0<|q|<1$, $a,b\in\CCast$, $|t|<1$. Then
\begin{eqnarray}
&&\hspace{-3.5cm}\qhyp21{b,-qb}{qb^2}{q,t}\qhyp21{\ppm a}{a^2}{q,-t}\nonumber\\
&&\hspace{-2.5cm}=\qhyp43{\ppm ab,\ppm qab}{qa^2,qb^2,a^2b^2}{q^2,t^2}+\frac{-bt}{1-qb^2}\qhyp43{\ppm qab,\ppm q^2ab}{qa^2,q^3b^2,q^2a^2b^2}{q^2,t^2}.
\end{eqnarray}
\end{thm}
\begin{proof}
Start with the Ismail--Wilson product generating function for Askey--Wilson polynomials \eqref{AWgf}, and replace
$(w,a,b,c,d)\mapsto({\mathrm i},\ppm{\mathrm i}a,{\mathrm i}b,-{\mathrm i}qb)$
using Theorem \ref{thm321new}. Finally splitting into even and odd terms using \eqref{eqthm318} and simplifying completes the proof.
\end{proof}

\noindent By rearranging the coefficient of the Askey--Wilson polynomial associated with the $q$-quadratic special value used above, then one obtains the following transformation.
\begin{thm}
\label{thm516}
Let $0<|q|<1$, $a,b\in\CCast$, $|t|<1$. Then
\begin{eqnarray}
&&\hspace{-0.5cm}\qhyp21{-a,-b}{-ab}{q,t}\qhyp21{-a,-qb}{-qab}{q,-t}\nonumber\\
&&\hspace{0.2cm}=\qhyp43{a^2,q^2b^2,ab,qab}{-qab,-q^2ab,a^2b^2}{q^2,t^2}+\frac{bt(1-a^2)}{(1+ab)(1+qab)}\qhyp43{q^2a^2,q^2b^2,qab,q^2ab}{-q^2ab,-q^3ab,q^2a^2b^2}{q^2,t^2}.
\end{eqnarray}
\end{thm}
\begin{proof}
Start with the Ismail--Wilson product generating function for Askey--Wilson polynomials \eqref{AWgf}, and replace
$(w,a,b,c,d)\mapsto({\mathrm i},{\mathrm i}a,{\mathrm i}b,-{\mathrm i}a,-{\mathrm i}qb)$
using Theorem \ref{thm318}. Finally splitting into even and odd terms using \eqref{eqthm318} and simplifying completes the proof.
\end{proof}

\noindent By starting with the Ismail--Wilson generating function for Askey--Wilson polynomials \eqref{AWgf}, substituting parameters and $w$ accordingly using Theorem 
\ref{thm321xnew}, we produce the following summation for a non-terminating ${}_2\phi_1$.
\begin{thm}
Let $0<|q|<1$, $a,b,t\in\CCast$ such that $|t|<1$. Then
\begin{eqnarray}\label{eq:thm45}
&&\hspace{-2.5cm}\qhyp21{qa^2,qab}{ab}{q,t}=
\frac{(q^2a^2t;q)_\infty}{(t;q)_\infty}\left(1+\frac{(1-\frac{b}{qa})qa^2t}{(1-ab)}\right)\nonumber\\
&&\hspace{-1.3cm}=\frac{(q^2a^2t;q^2)_\infty}{(qt;q^2)_\infty}\qhyp32{qa^2,q^3a^2,q^4ab}{q^2,ab}{q^4,t^2}\nonumber\\
&&\hspace{-0.3cm}+\frac{(q^2a^2t;q^2)_\infty\,(1-qa^2)(1-q^2ab)t}{(qt;q^2)_\infty\,(1-q^2)(1-ab)}\qhyp32{q^3a^2,q^5a^2,q^6ab}{q^6,q^2ab}{q^4,t^2}.
\end{eqnarray}
\end{thm}
\begin{proof}
Start with the Ismail--Wilson generating function for Askey--Wilson polynomials \eqref{AWgf} and replace 
\[
(a,b,c,d)\mapsto(\ppm q^\frac12,a,b).
\]
and first $w=qa$ and then $w=1/(qa)$. This produces two separate products of ${}_2\phi_1$'s and in one of them both of them can be summed, one because it is terminating and the other can be written as the $q$-binomial theorem with base $q^2$. In the other, one of the ${}_2\phi_1$'s is summable using the $q$-binomial theorem with base $q^2$.
Then splitting the infinite series into even and odd parts, utilizing Theorem \ref{thm321xnew} one obtains a sum of two ${}_6\phi_5$'s with base $q^2$ which can then be written as a sum of two ${}_3\phi_2$'s with base $q^4$. Finally, simplifying, the nonterminating summation follows.
\end{proof}

\begin{rem}
The first equality in \eqref{eq:thm45}\ is not deep
as the evaluation is easily seen to follow directly from contiguous relations. Indeed in the $_2\phi_1$ series there is a pair of
corresponding upper and lower parameters, namely $qab$ and $ab$, that multiplicatively only differ by the base $q$. This explains that the
left-hand side can be expanded in two terms, each being a summable
$_1\phi_0$ series. The two $_3\phi_2$ series also have similar pairs of
corresponding contiguous upper and lower parameters.
That means that the second expression on the right-hand side essentially consists of 4 terms which are all $_2\phi_1$ series.
The second equality is much more interesting than the first, since the just mentioned $_2\phi_1$ series are not summable by the $q$-Gauss summation.
\end{rem}
\noindent By starting with Theorem \ref{thmWeiWang} and utilizing the Ismail--Wilson generating function for Askey--Wilson polynomials we obtain the following transformation formula.
\begin{thm}
Let $0<|q|<1$, $a,b\in\CCast$, $|t|<1$. Then
\begin{eqnarray}
&&\hspace{-0.5cm}\qhyp21{a,q^\frac12a}{a^2}{q,t}
\qhyp21{b,q^\frac12b}{qb^2}{q^\frac12t}=
\qhyp87{\ppm\sqrt{ab},\ppm q^\frac14\sqrt{ab},\ppm q^\frac12\sqrt{ab},\ppm q^\frac34\sqrt{ab}}{q^\frac12,-a,-q^\frac12a,-q^\frac12b,-qb,ab,q^\frac12ab}{q,t^2}\nonumber\\
&&\hspace{3cm}+{}
\frac{(1-q^\frac12ab)t}{(1-q^\frac12)(1+a)(1+q^\frac12b)}\qhyp76{q^\frac32ab,q^2ab,q^\frac52ab,0,0,0,0}{q^\frac32,-q^\frac12a,-qa,-qb,-q^\frac32b,q^\frac12ab}{q,t^2}.
\end{eqnarray}
\end{thm}
\begin{proof}
Start with the Ismail--Wilson generating function for Askey--Wilson polynomials \eqref{AWgf} and replace 
\[
(z,a,b,c,d)\mapsto(q^\frac14,q^{\ppm\frac14}a,q^{-\frac14}b,q^{-\frac34}b).
\]
Then splitting the infinite series into even and odd parts, utilizing Theorem \ref{thmWeiWang} and simplifying, the transformation follows.
\end{proof}

\noindent This transformation has an interesting $q\to 1^{-}$ limit
\begin{eqnarray}
&&\hspace{-0.6cm}\hyp21{a,a+\frac12}{2a}{t}\hyp21{b,b+\frac12}{2b+1}{t}\nonumber\\
&&\hspace{-0.3cm}=\hyp43{\frac{a+b}{2},\frac{a+b}{2}+\frac14,\frac{a+b+1}{2},\frac{a+b}{2}+\frac34}{\frac12,a+b,a+b+\frac12}{t^2}+\left(\tfrac{a+b}{2}+\tfrac14\right)t\hyp32{a+b+\frac32,a+b+2,a+b+\frac52}{\frac32,a+b+\frac12}{\frac{t^2}{16}}.
\end{eqnarray}

\medskip 
\noindent Note that integral representations 
for the product of two nonterminating ${}_2\phi_1$'s appearing 
Theorems \ref{thm515}--\ref{thm518} can be computed in the same manner as was computed above. However, we leave these as exercises for the reader.

In \cite{Schlosser2018}, \eqref{Andrews} was used to prove the product formula
\cite[Theorem~4]{Schlosser2018} (which we reproduce in \eqref{firsteqM} below). We give an independent proof and are also provide an integral representation for this product.

\begin{thm}
Let {$|q|<1$}, $f,a,b,z\in\CCast$ with $|z|<1$, $w=\expe^{i\psi}$, $\sigma\in(0,\infty)$ such that in the integrand
the denominator elements of the infinite $q$-shifted factorials have modulus less than unity. Then 
\begin{eqnarray}
&&\hspace{-0.5cm}\qhyp21{a,\frac{q}{a}}{-q}{q,z}
\qhyp21{b,\frac{q}{b}}{-q}{q,-z}\nonumber\\
&&\hspace{0.1cm}=\qhyp43{ab,\frac{q^2}{ab},\frac{qa}{b},\frac{qb}{a}}{-q^2,\ppm q}{q^2,z^2}+
\frac{(b-a)(1-\frac{q}{ab})z}{1-q^2}\qhyp43{qab,\frac{q^3}{ab},\frac{q^2a}{b},\frac{q^2b}{a}}{-q^2,\ppm q^3}{q^2,z^2}\label{firsteqM}\\
&&\hspace{0.1cm}
=\frac{1}{2\pi}\frac{(q,\ppm a,\ppm b,\ppm\frac{q}{b};q)_\infty}{\vartheta(\ppm f;q)(-q,ab,\frac{qa}{b};q)_\infty}
%\nonumber\\
%&&\hspace{0.5cm}
%\times
\int_{-\pi}^\pi 
\frac{(\ppm{\mathrm i}f\frac{\sigma}{w},(\ppm{\mathrm i}f,{\mathrm i}qa)\frac{w}{\sigma};q)_\infty}
{(\ppm{\mathrm i}\frac{\sigma}{w},(-{\mathrm i}a,{\mathrm i}b,{\mathrm i}\frac{q}{b})\frac{w}{\sigma};q)_\infty}
\qhyp32{ab,\frac{qa}{b},-\frac{{\mathrm i}q}{a}\frac{\sigma}{w}}{-q,{\mathrm i}qa\frac{w}{\sigma}}{q,{\mathrm i}z\frac{w}{\sigma}}\dd\psi.
\label{intrep1}
\end{eqnarray}
\end{thm}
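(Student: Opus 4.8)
My plan is to treat the statement as two independent claims, the series identity \eqref{firsteqM} and the integral representation \eqref{intrep1}, and to prove them by separate methods: the first through the Ismail--Wilson generating function, the second through a $q$-beta integration.

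For \eqref{firsteqM}, the idea is to realize the left-hand side as a specialization of \eqref{AWgf}. Making the replacement $(a,b,c,d,w)\mapsto(-ia,-iq/a,ib,iq/b,i)$ together with $t\mapsto iz$ in \eqref{AWgf}, one has $ab=cd=-q$, $t/w=z$, $tw=-z$, and $x=\tfrac12(i+i^{-1})=0$, so the two Heine series on the right of \eqref{AWgf} collapse precisely to $\qhyp21{a,q/a}{-q}{q,z}$ and $\qhyp21{b,q/b}{-q}{q,-z}$. Consequently the product on the left of \eqref{firsteqM} equals $\sum_{n\ge0}(iz)^n\,p_n(0;-ia,-iq/a,ib,iq/b|q)/(q,-q,-q;q)_n$, reducing everything to the $x=0$ evaluation of the Askey--Wilson polynomial. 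That evaluation is exactly the quadratic special value \eqref{ev:Andrews} (taken with $i\mapsto-i$), which in turn rests on Andrews' summation \eqref{Andrews43}; this is what makes the argument an independent, generating-function-based route to the formula.

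It then remains to split the resulting series according to the parity of $n$. Writing $n=2m$ and $n=2m+1$, the factor $(iz)^n$ supplies $(-1)^m z^{2m}$, cancelling the sign $(-1)^{\lfloor n/2\rfloor}$ in \eqref{ev:Andrews}; rewriting $(q,-q,-q;q)_{2m}$ and $(q,-q,-q;q)_{2m+1}$ in base $q^2$ via $(x;q)_{2m}=(x,xq;q^2)_m$ and \eqref{sqPoch}, the even part reassembles into the first $_4\phi_3$ of \eqref{firsteqM} while the odd part reassembles into the second $_4\phi_3$, with the leftover linear factors (including the surplus $iz$ from the odd powers) combining exactly into the coefficient $(b-a)(1-q/(ab))z/(1-q^2)$. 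This last step is routine $q$-shifted-factorial bookkeeping.

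For the integral representation \eqref{intrep1}, the plan is to read $\tfrac1{2\pi}\int_{-\pi}^\pi(\cdots)\,d\psi$, with $w=\expe^{i\psi}$, as a $q$-beta integral on the unit circle of Askey--Roy type in which $f$ is the redundant free parameter, its cancellation against $\vartheta(\pm f;q)$ being exactly why $f$ may be chosen freely. I would expand the inner series $\qhyp32{ab,qa/b,-iq\sigma/(aw)}{-q,iqaw/\sigma}{q,izw/\sigma}$ into its defining series in an index $k$; the two $w$-dependent arguments then contribute finite $q$-shifted factorials that merge into the infinite products of the integrand (shifting $iqa\,w/\sigma\mapsto iq^{k+1}a\,w/\sigma$), so that each term becomes a ratio of infinite products which the Askey--Roy-type formula evaluates in closed form, the $f$-dependence dropping out. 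Resumming over $k$ and restoring the prefactor should return the product of the two $_2\phi_1$ series. The main obstacle is precisely this last step: pinning down the exact $q$-beta integral, matching its parameters to $a,b,f,\sigma$, and verifying that the $k$-sum collapses back to the Heine product; the convergence hypotheses on $z$, $\sigma$, and the denominator factors are exactly what keep the term-by-term integration and resummation legitimate.
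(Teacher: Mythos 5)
Your treatment of \eqref{firsteqM} is correct and is essentially the paper's own proof: the same specialization $(w,a,b,c,d,t)\mapsto(i,-ia,-\tfrac{iq}{a},ib,\tfrac{iq}{b},iz)$ of the Ismail--Wilson generating function \eqref{AWgf}, with the value of the Askey--Wilson polynomial at $x=0$ supplied by Andrews' $q$-analogue of Whipple's summation, followed by the even/odd split. The only cosmetic difference is that you route the evaluation through the packaged special value \eqref{ev:Andrews} (with $i\mapsto -i$, equivalently $a\leftrightarrow b$), while the paper applies \eqref{Andrews} with $(c,e)\mapsto(a,ab)$ directly; since \eqref{ev:Andrews} is itself derived from \eqref{Andrews43}, this is the same pipeline. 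Your bookkeeping also checks out: $(iz)^{2m}$ cancels the sign $(-1)^m$, the base-$q^2$ rewriting of $(q,-q,-q;q)_n$ produces the stated ${}_4\phi_3$ parameters, and the odd-part prefactor does simplify to $(b-a)(1-\tfrac{q}{ab})z/(1-q^2)$.

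The genuine gap is in \eqref{intrep1}, and you concede it yourself (``the main obstacle is precisely this last step''). What you give there is a plan, not a proof. The paper does not evaluate the integral by hand either: it invokes a ready-made integral representation for the Ismail--Wilson product, \cite[Theorem~3.3]{CohlCostasSantos23}, with the explicit substitution $(a,b,c,d,t,z)\mapsto(-ia,ib,\tfrac{iq}{b},-\tfrac{iq}{a},iz,i)$, so that \eqref{intrep1} is a direct specialization. Your proposed reconstruction via ``the'' Askey--Roy integral does not go through as described: after expanding the ${}_3\phi_2$ in $k$, the $k$-th term's kernel carries \emph{five} denominator factors --- two of argument $\sigma/w$, namely $\pm i\sigma/w$, and three of argument $w/\sigma$, namely $(-ia,ib,i\tfrac qb)w/\sigma$ --- together with the shifted numerator factor $(iq^{k+1}a\,w/\sigma;q)_\infty$ and the finite piece $(-\tfrac{iq}{a}\tfrac{\sigma}{w};q)_k$, whereas the Askey--Roy integral accommodates only a $2+2$ pattern of denominator factors. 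One therefore needs Gasper's extension of the Askey--Roy integral (which is the engine behind the cited theorem, and the origin of the redundant parameter $f$ normalized away by $\vartheta(\pm f;q)$), and even granting that, nothing in your sketch identifies the resulting closed forms or verifies that the $k$-sum reassembles into the product of the two Heine series --- that verification is the entire content of the claimed identity, not routine resummation. To complete the argument you must either carry out the Gasper-integral computation in full, or locate the general representation theorem for products $\qhyp21{aw,bw}{ab}{q,\tfrac tw}\qhyp21{\frac cw,\frac dw}{cd}{q,tw}$ and check the parameter match, as the paper does.
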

\begin{proof}
First utilize the Ismail--Wilson generating function for Askey--Wilson polynomials \eqref{AWgf} with
\[
(w,a,b,c,d,t)\mapsto({\mathrm i},-{\mathrm i}a,-\tfrac{{\mathrm i}q}{a},{\mathrm i}b,\tfrac{{\mathrm i}q}{b},{\mathrm i}z).
\]
\noindent An application of  Andrews' $q$-analogue of Whipple's ${}_3F_2(1)$ summation \eqref{Andrews}, followed by $(c,e)\mapsto(a,ab)$ and splitting the sum into the even and odd indexed parts completes the proof of \eqref{firsteqM}.
Finally, apply \cite[Theorem 3.3]{CohlCostasSantos23} to \eqref{Andrews}
with 
$(a,b,c,d,t,z)\mapsto
\big(-{\mathrm i}a,{\mathrm i}b,\tfrac{{\mathrm i}q}{b},-\tfrac{{\mathrm i}q}{a},{\mathrm i}z,{\mathrm i}\big)$, {which produces \eqref{intrep1},}
 completing the proof.
\end{proof}

\begin{rem}
The above transformation can also be obtained by starting with the Ismail--Wilson generating function \eqref{AWgf} and applying Theorem \ref{thm320}.
\end{rem}

\noindent A classic nonterminating $q$-product formula which generalizes T.~Clausen's (1828) formula
\cite[\href{http://dlmf.nist.gov/16.12.E2}{(16.12.2)}]{NIST:DLMF}, \cite{Clausen1828}
\begin{equation*}
\left(\hyp21{a,b}{a+b+\frac12}{z}\right)^2=\hyp32{2a,2b,a+b}{a+b+\frac12,2a+2b}{z},    
\end{equation*}
was derived by F.~H.~Jackson (1940) \cite{Jackson1940}, namely 
\begin{equation*}
\qhyp21{a^2,b^2}{qa^2b^2}{q^2,z}
\qhyp21{a^2,b^2}{qa^2b^2}{q^2,qz}
=\qhyp43{a^2,b^2,\ppm ab}{a^2b^2,\ppm q^\frac12 ab}{q,z},
\end{equation*}
where $|q|<1$ and $|z|<1$.
A $q$-analogue of a result given by Orr (1899)
%Bailey's formula 
\cite[(10.1.5)]{Bailey64},
\cite{Orr1899}
\begin{equation*}
\hyp21{a,b}{a+b-\frac12}{z}\hyp21{a,b}{a+b+\frac12}{z}=\hyp32{2a,2b,a+b}{2a+2b-1,a+b+\frac12}{z},
\end{equation*} 
is derived in Nassrallah's (1982) thesis, namely 
\cite[(3.3.1)]{Nassrallah}, given below in \eqref{eq:N1}.
For this product formula we provide an integral representation.
\begin{thm}
Let $0<|q|<1$, $f,a,b,z\in\CCast$ with $|z|<1$, $w=\expe^{i\psi}$, $\sigma\in(0,\infty)$ such that in the integrand 
the denominator elements of the infinite $q$-shifted factorials have modulus less than unity. Then 
\begin{eqnarray}\label{eq:N1}
&&\hspace{-1.0cm}\qhyp21{a^2,b^2}{\frac{a^2b^2}{q}}{q^2,z}
\qhyp21{a^2,b^2}{qa^2b^2}{q^2,qz}=\qhyp43{a^2,b^2,\ppm ab }{\frac{a^2b^2}{q},\ppm q^\frac12ab}{q,z}\\
&&\hspace{-0.5cm}
   =\frac{1}{2\pi}\frac{(q^2,a^2,a^2,qa^2,\frac{a^2}{q},b^2,qb^2;q^2)_\infty}{(f,\frac{q^2}{f},qf,\frac{q}{f},a^4,a^2b^2,qa^2b^2;q^2)_\infty}
   \nonumber\\
&&\hspace{0.5cm}
\times
\int_{-\pi}^\pi 
\frac{((q^{\frac12}f,\frac{q^\frac32}{f})\frac{\sigma}{w},(q^{\frac12}f,\frac{q^\frac32}{f},q^\frac12 a^4b^2)\frac{w}{\sigma};q^2)_\infty}{(q^{\ppm \frac12}\frac{\sigma}{w},(q^{\ppm\frac12}a^2,q^{-\frac12}b^2)\frac{w}{\sigma};q^2)_\infty}
\qhyp32{a^4,a^2b^2,q^{\frac12}b^2\frac{\sigma}{w}}{qa^2b^2,q^{-\frac12}a^4b^2\frac{w}{\sigma}}{q^2,q^{\frac12}z\frac{w}{\sigma}}\dd\psi.\label{eq:irN1}
\end{eqnarray}
\end{thm}
\begin{proof}
Start with 
\cite[(3.3.1)]{Nassrallah} which gives \eqref{eq:N1}.
Then application of \cite[Theorem 3.3]{CohlCostasSantos23} with 
\begin{equation*}
(a,b,c,d,t,z,q)\mapsto
\big(q^\frac12a^2,q^{-\frac12}a^2,q^{\frac12}b^2,q^{-\frac12}b^2,q^{\frac12}z,
q^{\frac12},q^2\big)
\end{equation*}
yields the integral representation \eqref{eq:irN1},
completing the proof.
\end{proof}
%\begin{equation}
%\qhyp21{a^2,b^2}{\frac{a^2b^2}{q}}{q^2,z}\qhyp21{a^2,b^2}{qa^2b^2}{q^2,qz}=
%\qhyp43{a^2,b^2,\ppm ab}{\frac{a^2b^2}{q},\ppm q^\frac12 ab}{q,z}.
% \end{equation}

\noindent A $q$-analogue of another
result given by
Orr (1899)
%different Bailey formula 
\cite[(10.1.6)]{Bailey64},
\cite{Orr1899}
\begin{equation*}
\hyp21{a,b}{a+b-\frac12}{z}\hyp21{a,b-1}{a+b-\frac12}{z}=\hyp32{2a,2b-1,a+b-1}{2a+2b-2,a+b-\frac12}{z},
%\label{Bailey1016}
\end{equation*} 
is derived in Nassrallah's (1982) thesis, namely 
\cite[(3.3.2)]{Nassrallah} (which we have corrected), that we give below
  in \eqref{NassrallahB}. We also provide an integral representation for this
product formula.

\begin{thm}
Let $0<|q|<1$, $f,a,b,z\in\CCast$ with $|z|<1$, $w=\expe^{i\psi}$, $\sigma\in(0,\infty)$ such that in the integrand
the denominator elements of the infinite $q$-shifted factorials have modulus less than unity. Then 
\begin{eqnarray}\label{NassrallahB}
&&\hspace{-1.0cm}\qhyp21{qa^2,qb^2}{qa^2b^2}{q^2,z}
\qhyp21{\frac{a^2}{q},qb^2}{qa^2b^2}{q^2,qz}=\qhyp43{a^2,qb^2,\ppm ab }{a^2b^2,\ppm q^\frac12ab}{q,z}\\
&&\hspace{-0.5cm}
=\frac{1}{2\pi}\frac{(q^2,qa^2,a^2,a^2,\frac{a^2}{q},qb^2,q^2b^2;q^2)_\infty}{(f,\frac{q^2}{f},qf,\frac{q}{f},a^4,qa^2b^2,q^2a^2b^2;q^2)_\infty}
\nonumber\\
&&\hspace{0.5cm}
\times
\int_{-\pi}^\pi 
\frac{((q^{\frac12}f,\frac{q^\frac32}{f})\frac{\sigma}{w},(q^{\frac12}f,\frac{q^\frac32}{f},q^\frac32a^4b^2)\frac{w}{\sigma};q^2)_\infty}{(q^{\ppm \frac12}\frac{\sigma}{w},(q^{\ppm\frac12}a^2,q^\frac32b^2)\frac{w}{\sigma};q^2)_\infty}
\qhyp32{a^4,q^2a^2b^2,q^{\frac12}b^2\frac{\sigma}{w}}{qa^2b^2,q^{\frac32}a^4b^2\frac{w}{\sigma}}{q^2,q^{\frac12}z\frac{w}{\sigma}}\dd\psi.\label{eq:irN2}
\end{eqnarray}
\end{thm}
\begin{proof}
Start with the corrected version of 
\cite[(3.3.2)]{Nassrallah}.
Then application of \cite[Theorem 3.3]{CohlCostasSantos23} with 
\begin{equation*}
(a,b,c,d,t,z,q)\mapsto
\big(q^{\frac12}a^2,q^{-\frac12}a^2,q^{\frac12}b^2,q^{\frac32}b^2,q^{\frac12}z,
  q^{\frac12},q^2\big)
\end{equation*}
yields the integral representation \eqref{eq:irN2}, completing the proof.
\end{proof}

%\begin{equation}
%\qhyp21{qa^2,qb^2}{qa^2b^2}{q^2,z}\qhyp21{\frac{a^2}{q},qb^2}{qa^2b^2}{q^2,qz}=
%\qhyp43{a^2,qb^2,\ppm ab}{a^2b^2,\ppm q^{\frac12} ab}{q,z}.
%\label{NassrallahB}
%\end{equation}

\begin{rem}
\label{rem34}
By starting with the the product generating function for Askey--Wilson polynomials \eqref{AWgf}
where $x=\frac12(w+w^{-1})$,
and replacing
\begin{equation*}    
(w,a,b,c,d)\mapsto\left(q^{-\frac14}, q^{\frac14}a,q^{-\frac14}\tfrac{b}{a},q^{\frac14}\tfrac{b}{a},q^{-\frac14}a\right),
\end{equation*}
followed by using the basic hypergeometric representation for Askey--Wilson polynomials \eqref{aw:def1}, the balanced ${}_4\phi_3$ is converted into the
form of the 
second $q$-analogue of Bailey's ${}_4F_3(1)$ sum, namely \eqref{eq:BIS-2.1} \cite[\href{http://dlmf.nist.gov/17.7.E13}{(17.7.13)}]{NIST:DLMF},
%\begin{equation}
%\qhyp43{q^{-2n},q^{2n-2}b^2,a,qa}{b,qb,a^2}{q^2,q^2}=a^n\frac{(-q,\frac{b}{a};q)_n\,(1-q^{n-1}b)}{(-a,b;q)_n\,(1-q^{2n-1}b)},
%\label{eq:BIS-2.1}
%\end{equation}
where $q^2$ is replaced with $q$. Then adopting the right-hand side of \eqref{eq:BIS-2.1} and replacing  $q\mapsto q^2$ in the entire expression and setting $t\mapsto q^\frac14z$,
followed by replacing  $(a,b)\mapsto(qb^2,qa^2)$ 
produces \eqref{NassrallahB}.
\end{rem}

\begin{rem}
As pointed out to us by Slobodan Damjanovic, the $q$-quadratic summation in \eqref{eq:N1}
is equivalent (by comparison of the coefficients of $z^n$ on both sides of the identity, etc.)
to Guo's summation in \eqref{eq:g}.
Similarly, the $q$-quadratic summation in \eqref{NassrallahB}
is equivalent (by comparison of the coefficients of $z^n$ on both sides of the identity, etc.)
to Wei and Wang's summation in \eqref{eq:wx}.
\end{rem}

\begin{rem}
If one sets $a^2=q$ in \eqref{NassrallahB} or $b^2=q$ in \eqref{eq:thm21eq},
then one obtains
\begin{equation*}
\qhyp21{q^2,a^2}{qa^2}{q^2,z}=\qhyp32{q,\ppm a}{\ppm q^\frac12 a}{q,z}
\end{equation*}
(the series being convergent for $|q|<1$ and $|z|<1$).
Note however that this result is trivially satisfied,
as the identity holds termwise.
\end{rem}

\noindent Another formula relating a product of two nonterminating
  ${}_2\phi_1$ series with  a nonterminating ${}_4\phi_3$ series
was obtained by Srivastava and Jain \cite[(4.9)]{SrivJain1986} (see also
\cite[Theorem 2.1]{Schlosser2018}), given in \eqref{eq:JS} below. For this
product formula we also provide an integral representation.
%\begin{equation}
%\qhyp21{\ppm a}{a^2}{q,z}
%\qhyp21{\ppm b}{b^2}{q,-z}
%=\qhyp43{\ppm ab,\ppm qab}
%{qa^2,qb^2,a^2b^2}{q^2,z^2}.
%\end{equation}
\begin{thm}
\label{thm45}
Let $|q|<1$, $f,a,b,z\in\CCast$ with $|z|<1$, $w=\expe^{i\psi}$, $\sigma\in(0,\infty)$ such that in the integrand, 
the denominator elements of the infinite $q$-shifted factorials have modulus less than unity. Then 
\begin{eqnarray}\label{eq:JS}
&&\hspace{-0.8cm}\qhyp21{\ppm a}{a^2}{q,z}
\qhyp21{\ppm b}{b^2}{q,-z}=\qhyp43{\ppm ab,\ppm qab}{qa^2,qb^2,a^2b^2}{q^2,z^2}\\
&&\hspace{-0.5cm}
=\frac{1}{2\pi}\frac{(q,\ppm a,\ppm b,\ppm b;q)_\infty}{\vartheta(\ppm f;q)(\ppm ab,b^2;q)_\infty}
\nonumber\\
&&\hspace{0.5cm}
\times
\int_{-\pi}^\pi 
\frac{(({\mathrm i}f,-{\mathrm i}\frac{q}{f})\frac{\sigma}{w},({\mathrm i}f,-{\mathrm i}\frac{q}{f},-{\mathrm i}ab^2)\frac{w}{\sigma};q)_\infty}{(\ppm{\mathrm i}\frac{\sigma}{w},(-{\mathrm i}a,\ppm{\mathrm i}b)\frac{w}{\sigma};q)_\infty}
\qhyp32{\ppm ab,{\mathrm i}a\frac{\sigma}{w}}{a^2,-{\mathrm i}ab^2\frac{w}{\sigma}}{q,{\mathrm i}z\frac{w}{\sigma}}\dd\psi.\label{eq:irJS}
\end{eqnarray}
\end{thm}
\begin{proof}
Start with \eqref{Andrews2}. Application of  \cite[Theorem 3.3]{CohlCostasSantos23} with
\begin{equation*}
(a,b,c,d,t,z)\mapsto(-{\mathrm i}a,{\mathrm i}b,-{\mathrm i}b,{\mathrm i}a,{\mathrm i}z,{\mathrm i})
\end{equation*}
yields \eqref{eq:irJS},
completing the proof.
\end{proof}
\noindent This formula is a $q$-analogue of Bailey's formula \cite[(2.11)]{Bailey1928}
\begin{equation*}
\hyp11{a}{2a}{z}
\hyp11{b}{2b}{-z}=
\hyp23{\frac12(a+b),\frac12(a+b+1)}
{a+\frac12,b+\frac12,a+b}{\tfrac14z^2}.
\end{equation*}
Along the lines of \cite{Schlosser2018}, we are able to derive similar product formulas. The first is given as follows.

\begin{thm}
\label{thm21}
Let $0<|q|<1$, $f,a,b,z\in\CCast$ with $|z|<1$, $w=\expe^{i\psi}$, $\sigma\in(0,\infty)$ such that in the integrand
the denominator elements of the infinite $q$-shifted factorials have modulus less than unity. Then 
\begin{eqnarray}
\label{thm21eq}
&&\hspace{-1.0cm}\qhyp21{qa^2,qb^2}{qa^2b^2}{q^2,z}
\qhyp21{\frac{a^2}{q},\frac{b^2}{q}}{\frac{a^2b^2}{q}}{q^2,qz}=\qhyp43{a^2,b^2,\ppm ab }{\frac{a^2b^2}{q},\ppm q^\frac12ab}{q,z}\label{eq:thm21eq}\\
&&\hspace{-0.5cm}
=\frac{1}{2\pi}\frac{(q^2,qa^2,a^2,a^2,\frac{a^2}{q},b^2,\frac{b^2}{q};q^2)_\infty}{(f,\frac{q^2}{f},qf,\frac{q}{f},a^4,a^2b^2,\frac{a^2b^2}{q};q^2)_\infty}
\nonumber\\
&&\hspace{0.5cm}
\times
\int_{-\pi}^\pi 
\frac{((q^{\frac12}f,\frac{q^\frac32}{f})\frac{\sigma}{w},(q^{\frac12}f,\frac{q^\frac32}{f},q^{-\frac12}a^4b^2)\frac{w}{\sigma};q^2)_\infty}{(q^{\ppm\frac12}\frac{\sigma}{w},(q^{\ppm\frac12}a^2,q^{-\frac12}b^2)\frac{w}{\sigma};q^2)_\infty}
\qhyp32{a^4,a^2b^2,q^{\frac12}b^2\frac{\sigma}{w}}{qa^2b^2,q^{-\frac12}a^4b^2\frac{w}{\sigma}}{q^2,q^{\frac12}z\frac{w}{\sigma}}\dd\psi.
\label{thm21eqi}
\end{eqnarray}
\end{thm}
\begin{proof}
First start with the Ismail--Wilson product generating function for Askey--Wilson polynomials \eqref{AWgf}, and apply
\begin{equation*}    
(w,a,b,c,d)\mapsto\big(q^{-\frac14},q^{\frac14}a,q^{-\frac14}\tfrac{b}{a},q^{\frac14}\tfrac{b}{a},q^{\frac34}a\big).
\end{equation*}
Then using the basic hypergeometric representation for Askey--Wilson polynomials \eqref{aw:def1}, the balanced ${}_4\phi_3$ is converted into the
form of the 
first $q$-analogue of Bailey's ${}_4F_3(1)$ sum, namely \cite[\href{http://dlmf.nist.gov/17.7.E12}{(17.7.12)}]{NIST:DLMF}
\begin{equation}
\qhyp43{q^{-2n},q^{2n}b^2,a,qa}{b,qb,q^2a^2}{q^2,q^2}=a^n\frac{(-q,\frac{b}{a};q)_n}{(-qa,b;q)_n},
\label{sum1}
\end{equation}
where $q^2$ is replaced with $q$. Then adopting the right-hand side of \eqref{sum1} and replacing  $q\mapsto q^2$ in the entire expression and setting $t\mapsto q^\frac14z$, followed by replacing $(a,b)\mapsto(qa^2,qb^2)$,
product formula \eqref{eq:thm21eq} is obtained.
Replace $q^2\mapsto q$. Then the application of
\cite[Theorem 3.3]{CohlCostasSantos23} with
\begin{equation*}
(a,b,c,d,t,z)\mapsto
\big(q^\frac14a^2,q^{-\frac14}a^2,q^\frac14b^2,q^{-\frac14}b^2,q^\frac14z,q^\frac14\big),
\end{equation*}
followed by replacing $q\mapsto q^2$,yields the integral
representation \eqref{thm21eq}, completing the proof.
\end{proof}

\begin{cor}
Let $a,b,z\in\CC$ such that $|z|<1$ and $2a+2b\not\in-\N$. Then
\begin{eqnarray}
&&\hyp21{a,b}{a+b+\frac12}{z}\hyp21{a+1,b+1}{a+b+\frac32}{z}=\hyp32{2a+1,2b+1,a+b+1}{2a+2b+1,a+b+\frac32}{z}
.\end{eqnarray}
\end{cor}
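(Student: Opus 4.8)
The plan is to derive the stated classical identity as the confluent limit $q\to1^-$ of the product formula \eqref{eq:thm21eq} of Theorem~\ref{thm21}. Concretely, I would first substitute $a^2\mapsto q^{2a+1}$ and $b^2\mapsto q^{2b+1}$ in \eqref{eq:thm21eq} (so that $ab\mapsto q^{a+b+1}$, where now $a,b$ denote the classical parameters of the corollary), and then let $q\to1^-$ with $|z|<1$ held fixed. Throughout I would invoke the two standard confluence limits
\[
\lim_{q\to1^-}\frac{(q^\lambda;q)_k}{(1-q)^k}=(\lambda)_k,\qquad
\lim_{q\to1^-}(-q^\lambda;q)_k=2^k,
\]
which hold for each fixed $k\in\N_0$ and $\lambda\in\C$.

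For the right-hand side, the ${}_4\phi_3$ in \eqref{eq:thm21eq} has numerator entries $\pm ab$ and denominator entries $\pm q^{\frac12}ab$, which under the substitution become $\pm q^{a+b+1}$ and $\pm q^{a+b+\frac32}$. In the $k$-th term, each ``plus'' entry $(q^\lambda;q)_k$ contributes a factor $(1-q)^k$, while each ``minus'' entry $(-q^\lambda;q)_k$ tends to $2^k$; since there is exactly one minus-entry in the numerator and one in the denominator, these powers of $2$ cancel, and likewise the three surviving powers of $(1-q)$ in the numerator cancel against those of $(q;q)_k$ and the two remaining denominator entries. The $k$-th coefficient therefore tends to
\[
\frac{(2a+1)_k(2b+1)_k(a+b+1)_k}{k!\,(2a+2b+1)_k(a+b+\frac32)_k},
\]
so that the right-hand side tends to $\hyp32{2a+1,2b+1,a+b+1}{2a+2b+1,a+b+\frac32}{z}$, as required. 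For the left-hand side, recalling that the two ${}_2\phi_1$ series have base $q^2$, the first factor becomes $\qhyp21{q^{2a+2},q^{2b+2}}{q^{2a+2b+3}}{q^2,z}\to\hyp21{a+1,b+1}{a+b+\frac32}{z}$, while the second becomes $\qhyp21{q^{2a},q^{2b}}{q^{2a+2b+1}}{q^2,qz}\to\hyp21{a,b}{a+b+\frac12}{z}$ (the argument $qz\to z$); their product is the left member of the statement.

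The main obstacle is the analytic justification of passing the limit $q\to1^-$ inside the infinite summations. I would handle this by viewing each side of \eqref{eq:thm21eq} as a power series in $z$ whose coefficients are explicit rational functions of $q^{\pm1/2}$ that converge, coefficient by coefficient, to the coefficients of the limiting ${}_2F_1$ and ${}_3F_2$ series; since those limiting series have radius of convergence at least $1$, a standard dominated-convergence (equivalently, a Vitali/uniform-on-compacta) argument legitimizes the termwise passage to the limit for $|z|<1$. The hypotheses $|z|<1$ and $2a+2b\notin-\N$ are precisely what keep all denominator parameters away from non-positive integers and guarantee that the limiting series are well defined. Alternatively, one may simply appeal to the well-known fact that such confluent limits carry basic hypergeometric identities to their ordinary hypergeometric counterparts, which shortcuts the estimate but relies on the same underlying mechanism.
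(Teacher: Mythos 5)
Your proof is correct and takes essentially the same route as the paper's: the paper likewise specializes Theorem~\ref{thm21} via $(a,b)\mapsto(q^a,q^b)$, lets $(q^2,q)\to 1^-$ on the left- and right-hand sides respectively, and then shifts $(a,b)\mapsto(a+\frac12,b+\frac12)$, which is exactly your combined substitution $a^2\mapsto q^{2a+1}$, $b^2\mapsto q^{2b+1}$. Your explicit bookkeeping of the $(1-q)^k$ and $2^k$ factors and the justification of the termwise limit are details the paper leaves implicit, but the underlying argument is identical.
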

\begin{proof}
Start with Theorem \ref{thm21} and replace $(a,b)\mapsto(q^a,q^b)$. Then
take the limit as $(q^2,q)\to 1^{-}$ on the left and right-hand
sides, respectively, followed by the replacement
$(a,b)\mapsto (a+\frac12,b+\frac12)$, completing the proof.
\end{proof}

\begin{rem}
If one starts with \eqref{AWgf} and applies the substitution
$(w,a,b,c,d)\mapsto({\mathrm i},a,b,-a,-b)$ (notice that $w=i$ or $w=-{\mathrm i}$
is equivalent to $x=0$), then the Askey--Wilson polynomial
appearing in the series, written as a ${}_4\phi_3$ using \eqref{aw:def3},
can be summed using \eqref{newsum3}. Using the fact that only those
terms of the series survive for which the summation index $n$ is even
(the other terms vanish), replacing $n$ by $2n$, followed by a series of
simplifications, one arrives at the identity 
\begin{eqnarray*}
&&\hspace{-4.4cm}\qhyp21{a,b}{-ab}{q,t}\qhyp21{a,b}{-ab}{q,-t}=\qhyp43{ab,qab,a^2,b^2}{-ab,-qab,a^2b^2}{q^2,t^2},
%\label{prod2phi1new}
\end{eqnarray*}
where $|q|<1$ and $|t|<1$ for convergence.
This product formula was previously derived in Srivastava (1987)
\cite[(3.13)]{Srivastava87} (see also \cite[(3.1)]{Schlosser2018}).
\end{rem}

\noindent
By starting with Theorem \ref{thm320new} and inserting it into the Ismail--Wilson generating function \eqref{AWgf} we obtain the following $q$-quadratic transformation for a product of two ${}_2\phi_1$s given in terms of a sum of two $q$-quadratic ${}_4\phi_3$'s.
\begin{thm}
Let $0<|q|<1$, $a,b,z\in\CCast$ with $|z|<1$. Then
\begin{eqnarray}
&&\hspace{-1.1cm}\qhyp21{\ppm a}{\frac{a^2}{q}}{q,z}\qhyp21{\ppm b}{qb^2}{q,-qz}
\nonumber\\
&&\hspace{0.5cm}
=\qhyp43{\ppm ab,\ppm qab}{\frac{a^2}{q},qb^2,a^2b^2}{q^2,z^2}+\frac{z(1-a^2b^2)}{(1-\frac{a^2}{q})(1-qb^2)}\qhyp43{\ppm qab,\ppm q^2ab}{qa^2,q^3b^2,a^2b^2}{q^2,z^2}.
\end{eqnarray}
\end{thm}
\begin{proof}
First start with the Ismail--Wilson product generating function for Askey--Wilson polynomials \eqref{AWgf}, and replace
\begin{equation*}    
(w,a,b,c,d)\mapsto({\mathrm i}q^{\frac12},\ppm{\mathrm i}a,\ppm{\mathrm i}b).
\end{equation*}
Then using the Askey--Wilson summation formula  Theorem \ref{thm320new}, splitting the infinite sum into even and odd contributions and simplifying completes the proof.
\end{proof}

\noindent 
By starting with Theorem \ref{thm321new} and inserting it into the Ismail--Wilson generating function \eqref{AWgf} we obtain the following $q$-quadratic transformation for a product of two ${}_2\phi_1$s given in terms of a sum of two $q$-quadratic ${}_5\phi_4$'s.
\begin{thm}
Let $0<|q|<1$, $a,b,z\in\CCast$ with $|z|<1$. Then
\begin{eqnarray}
&&\hspace{-0.7cm}\qhyp21{\ppm qa}{qa^2}{q,z}\qhyp21{b,-qb}{q^2b^2}{q,-qz}\nonumber\\
&&\hspace{0.0cm}=\qhyp54{q^3b,\ppm qab,\ppm q^2ab}{qa^2,qb,q^3b^2,q^2a^2b^2}{q^2,z^2}+\frac{z(1-q^2a^2b)}{(1-qa^2)(1-qb)}\qhyp54{\ppm q^2ab,\ppm q^3ab,q^4a^2b}{q^3a^2,q^3b^2,q^2a^2b,q^4a^2b^2}{q^2,z^2}.
\end{eqnarray}
\end{thm}
\begin{proof}
First start with the Ismail--Wilson product generating function for Askey--Wilson polynomials \eqref{AWgf}, and replace
\begin{equation*}    
(w,a,b,c,d)\mapsto({\mathrm i}q^{\frac12},\ppm{\mathrm i}a,{\mathrm i}b,-{\mathrm i}qb).
\end{equation*}
Then using the Askey--Wilson summation formula  Theorem \ref{thm321new}, splitting the infinite sum into even and odd contributions and simplifying completes the proof.
\end{proof}

\noindent By starting with Theorem \ref{thmAW38} and inserting it into the Ismail--Wilson generating function \eqref{AWgf} we obtain the following transformation for a product of two ${}_2\phi_1$'s given in terms of a sum of a ${}_6\phi_5$'s and a ${}_4\phi_3$ each with base $q^2$.
\begin{thm}
Let $0<|q|<1$, $a,b,z\in\CCast$ with $|z|<1$. Then
\begin{eqnarray}
&&\hspace{-0.3cm}\qhyp21{a,-qa}{a^2}{q,z}\qhyp21{b,-qb}{q^2b^2}{q,-qz}=\frac{(1+\frac{q}{a})(1-qab)}{q(1+\frac{1}{a})(1-ab)}\qhyp65{q,q^3b,ab,-qab,-q^2ab,q^3ab}{q,qb,qa^2,q^3b^2,q^2a^2b^2}{q^2,z^2}\nonumber\\
&&\hspace{6.6cm}
-\frac{(1-q)(1+qb)}{q(1+\frac{1}{a})(1-ab)}\qhyp65{q^3,q^4b^2,ab,\ppm qab,-q^2ab}{q,qa^2,q^2b^2,q^3b^2,q^2a^2b^2}{q^2,z^2}\nonumber\\
&&\hspace{6.6cm}+\frac{(1+qab)z}{(1+a)(1-qb)}\qhyp43{qab,\ppm q^2ab,-q^3ab}{qa^2,q^3b^2,q^2a^2b^2}{q^2,z^2}.
\end{eqnarray}
\end{thm}
\begin{proof}
First start with the Ismail--Wilson product generating function for Askey--Wilson polynomials \eqref{AWgf}, and replace
\begin{equation*}    
(w,a,b,c,d)\mapsto({\mathrm i}q^{\frac12},{\mathrm i}a,-{\mathrm i}qa,{\mathrm i}b,-{\mathrm i}qb).
\end{equation*}
Then using the Askey--Wilson summation formula  Theorem \ref{thmAW38}, splitting the infinite sum into even and odd contributions, scaling the variables appropriately and simplifying completes the proof.
\end{proof}

%\subsection{Alternatative verifications}
\subsection{Application of Cayley--Orr type expansion formulas}

\noindent By using Cayley--Orr type expansion formulas
stated in Gasper and Rahman's textbook as \cite[Exercises 3.17--3.19]{GaspRah}
(obtained by Singh~\cite{Singh59} and Nassrallah \cite{Nassrallah}),
one may verify the product formulas we have derived
and derive new ones. We start with
\cite[Exercise 3.18]{GaspRah} where we have replaced $(a,b)\mapsto(\frac{a}{q},\frac{b}{q})$.

\begin{lem}
\label{firstprop}
Let $0<|q|<1$, $a,b,c,z\in\CCast$, $|z|<1$, $|q^2cz|<|ab|$ and 
\begin{equation}
\frac{(\frac{q^3cz}{ab};q^2)_\infty}{(z;q^2)_\infty}\qhyp21{\frac{a}{q},\frac{b}{q}}{c}{q,\frac{q^2cz}{ab}}
=\sum_{n=0}^\infty a_nz^n. 
\label{ex318prop}
\end{equation}
Then
\begin{equation}
\qhyp21{\frac{q^2c}{a},\frac{q^2c}{b}}{q^2 c}{q^2,z}
\qhyp21{\frac{a}{q},\frac{b}{q}}{c}{q^2,\frac{q^2cz}{ab}}=\sum_{n=0}^\infty
\frac{(qc;q^2)_n}{(q^2c;q^2)_n}a_nz^n.
\label{ex318}
\end{equation}
\end{lem}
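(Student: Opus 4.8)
The plan is to prove the identity by comparing the coefficients of $z^n$ on the two sides; it is the $q$-Cayley--Orr transform, being precisely \cite[Exercise~3.18]{GaspRah} after undoing the substitution $(a,b)\mapsto(a/q,b/q)$, so one quick route is simply to invoke that exercise. For a self-contained argument I would set $w=cz/(ab)$ and keep careful track of the feature that is essential here and easy to overlook: the ${}_2\phi_1$ that defines the $a_n$ in \eqref{ex318prop} has base $q$, whereas the two ${}_2\phi_1$ factors in \eqref{ex318} have base $q^2$, so the identity genuinely links a base-$q$ series to base-$q^2$ series, and a numerical check at $z^1$ already confirms that this mixed-base reading (not an all-$q^2$ reading) is the correct one.

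First I would expand the infinite-product prefactor of \eqref{ex318prop} by the $q$-binomial theorem \cite[(II.3)]{GaspRah} in base $q^2$, turning $(\tfrac{q^3cz}{ab};q^2)_\infty/(z;q^2)_\infty$ into ${}_1\phi_0(q^3c/(ab);-;q^2,z)$; multiplying by the base-$q$ series and collecting powers then exhibits $a_n$ as an explicit (mixed-base) terminating convolution. Next I would apply Heine's $q$-Euler transformation \cite[(III.3)]{GaspRah} in base $q$ to the factor ${}_2\phi_1(a/q,b/q;c;q,q^2w)$: the $q$-Euler transformation sends it to an infinite-product ratio times ${}_2\phi_1(qc/a,qc/b;c;q,z)$, the transformed argument collapsing to $z$, and the resulting base-$q$ products combine with the base-$q^2$ prefactor via $(x;q)_\infty=(x;q^2)_\infty(qx;q^2)_\infty$ to give a much simpler shape for $\sum_n a_nz^n$. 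A parallel application of \cite[(III.3)]{GaspRah} in base $q^2$ to one of the factors of \eqref{ex318} puts that side into a comparable form.

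Finally I would extract $[z^n]$ from both reduced forms, so that the claim becomes the single identity in the summation index asserting that the coefficient of $z^n$ in \eqref{ex318} equals $\tfrac{(qc;q^2)_n}{(q^2c;q^2)_n}\,a_n$. Reversing the order of summation and using the reflection rule for $q^2$-shifted factorials turns each side into a terminating balanced series, and the required proportionality by $\tfrac{(qc;q^2)_n}{(q^2c;q^2)_n}$ then follows from the $q$-Pfaff--Saalschütz summation \cite[(II.12)]{GaspRah} combined with a quadratic transformation relating base-$q$ and base-$q^2$ series (of the type already used in the paper, cf.\ \cite[(III.4)]{GaspRah} and \cite[(3.10.9)--(3.10.10)]{GaspRah}). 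I expect this last step to be the main obstacle: the mixed-base bookkeeping, and in particular pinning down the precise quadratic transformation that converts the base-$q$ contribution into the base-$q^2$ factors on the right, is where the real work lies, whereas the $q$-binomial and $q$-Euler manipulations are routine.
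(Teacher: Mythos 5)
Your quick route is exactly the paper's proof: the paper disposes of Lemma~\ref{firstprop} with the single line ``See \cite[Exercise 3.18]{GaspRah}'', the substitution $(a,b)\mapsto(\frac{a}{q},\frac{b}{q})$ having already been declared in the sentence introducing the lemma, so invoking that exercise is the complete and intended argument. Your supplementary self-contained sketch goes beyond anything the paper attempts, and while its opening reductions are sound (the $q$-Euler transformation \cite[(III.3)]{GaspRah} indeed collapses the transformed argument to $z$ in both the base-$q$ and base-$q^2$ applications), it leaves the decisive quadratic base-$q$-to-base-$q^2$ step unidentified, so as written it is the citation, not the sketch, that carries the proof.
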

%\begin{proof}
%See \cite[Exercise 3.18]{GaspRah}.
%\end{proof}

\begin{thm}
Theorem \ref{thm21} is verified by Lemma \ref{firstprop} with $c=\frac{ab}{q}$.
\end{thm}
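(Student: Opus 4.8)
The plan is to specialize Lemma~\ref{firstprop} at $c=\frac{ab}q$ and then match the resulting identity \eqref{ex318}, one factor at a time, against the product formula \eqref{eq:thm21eq} of Theorem~\ref{thm21} under the replacement $a\mapsto a^2$, $b\mapsto b^2$. First I would substitute $c=\frac{ab}q$ into the left-hand side of \eqref{ex318}. The parameters collapse, $\frac{q^2c}a=qb$, $\frac{q^2c}b=qa$, $q^2c=qab$, and the argument of the second series becomes $\frac{q^2cz}{ab}=qz$, so the left-hand side of \eqref{ex318} reads
\[
\qhyp21{qa,qb}{qab}{q^2,z}\qhyp21{\frac aq,\frac bq}{\frac{ab}q}{q^2,qz}.
\]
Replacing $a\mapsto a^2$ and $b\mapsto b^2$ turns this into precisely the product of the two ${}_2\phi_1$ series on the left of \eqref{eq:thm21eq}.

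The crux is to evaluate the coefficients $a_n$ from \eqref{ex318prop} at $c=\frac{ab}q$ in closed form. At this value $\frac{q^3cz}{ab}=q^2z$, and since $(z;q^2)_\infty=(1-z)\,(q^2z;q^2)_\infty$ the $q$-shifted-factorial prefactor reduces to $\frac1{1-z}$, giving
\[
\sum_{n=0}^\infty a_nz^n=\frac1{1-z}\,\qhyp21{\frac aq,\frac bq}{\frac{ab}q}{q,qz}.
\]
I would then recognize the right-hand side as an instance of the Euler (third Heine) transformation \cite[(III.3)]{GaspRah}, namely $\qhyp21{a,b}{c}{q,z}=\frac{(abz/c;q)_\infty}{(z;q)_\infty}\qhyp21{c/a,c/b}{c}{q,abz/c}$, read with $c=\frac{ab}q$: then $abz/c=qz$, $c/a=\frac bq$, $c/b=\frac aq$, and the prefactor equals $\frac1{1-z}$, so the displayed series is simply $\qhyp21{a,b}{\frac{ab}q}{q,z}$. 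Comparing coefficients yields $a_n=\frac{(a,b;q)_n}{(q,\frac{ab}q;q)_n}$.

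Finally I would feed this into the right-hand side of \eqref{ex318}. Since $qc=ab$ and $q^2c=qab$ at $c=\frac{ab}q$, that side equals $\sum_{n\ge0}\frac{(ab;q^2)_n}{(qab;q^2)_n}\frac{(a,b;q)_n}{(q,\frac{ab}q;q)_n}z^n$; after $a\mapsto a^2$, $b\mapsto b^2$ and the use of \eqref{sqPoch} to rewrite $(a^2b^2;q^2)_n=(\pm ab;q)_n$ and $(qa^2b^2;q^2)_n=(\pm q^{\frac12}ab;q)_n$, the summand becomes exactly the general term of the balanced ${}_4\phi_3$ on the right of \eqref{eq:thm21eq}. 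The only nonmechanical step is the closed-form evaluation of $a_n$, so I expect the main (and rather mild) obstacle to be simply spotting that this is Euler's transformation; the remaining matching is routine bookkeeping with $q$-Pochhammer symbols.
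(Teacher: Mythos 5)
Your proposal is correct and follows the paper's verification in outline---specialize Lemma~\ref{firstprop} at $c=\frac{ab}{q}$, observe that the prefactor in \eqref{ex318prop} collapses to $\frac{1}{1-z}$ since $\frac{q^3cz}{ab}=q^2z$, evaluate the coefficients $a_n$ in closed form, and match the resulting series against \eqref{eq:thm21eq} via $a\mapsto a^2$, $b\mapsto b^2$ and \eqref{sqPoch}---but you handle the one nontrivial step by a different key identity. The paper expands $(1-z)^{-1}$ as a geometric series, so that $a_n$ appears as the terminating partial sum $\sum_{k=0}^n\frac{(\frac{a}{q},\frac{b}{q};q)_k}{(q,\frac{ab}{q};q)_k}q^k$, which it then evaluates to $\frac{(a,b;q)_n}{(q,\frac{ab}{q};q)_n}$ by the $q$-Pfaff--Saalsch\"utz summation \cite[(II.12)]{GaspRah}; you instead recognize $\frac{1}{1-z}\,\qhyp21{\frac{a}{q},\frac{b}{q}}{\frac{ab}{q}}{q,qz}$ as the $c=\frac{ab}{q}$ case of the $q$-Euler (third Heine) transformation \cite[(III.3)]{GaspRah} and read off $a_n$ directly from $\qhyp21{a,b}{\frac{ab}{q}}{q,z}$. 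The two devices are equivalent in this instance---the $c=\frac{ab}{q}$ specialization of Euler's transformation is precisely the generating-function form of that $q$-Pfaff--Saalsch\"utz evaluation, as one sees by comparing coefficients of $z^n$---but your route is arguably slicker: a single known series transformation replaces the Cauchy-product and termwise-summation bookkeeping, at the negligible cost of invoking a transformation where the paper invokes a summation. All the parameter collapses you record ($\frac{q^2c}{a}=qb$, $\frac{q^2c}{b}=qa$, $q^2c=qab$, $\frac{q^2cz}{ab}=qz$, and $\frac{(qc;q^2)_n}{(q^2c;q^2)_n}=\frac{(ab;q^2)_n}{(qab;q^2)_n}$) check out, and your final identification of the summand with the balanced ${}_4\phi_3$ in \eqref{eq:thm21eq}, using \eqref{sqPoch} to write $(a^2b^2;q^2)_n=(\pm ab;q)_n$ and $(qa^2b^2;q^2)_n=(\pm q^{\frac12}ab;q)_n$, is exactly as in the paper.
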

\begin{proof}
If one takes $c=ab/q$ then the left-hand side of
\eqref{ex318} becomes the left-hand side of \eqref{thm21eq}. Replacing $c=ab/q$ in \eqref{ex318prop} produces
\begin{equation*}
\frac{(q^2z;q^2)_\infty}{(z;q^2)_\infty}
\qhyp21{\frac{a}{q},\frac{b}{q}}{\frac{ab}{q}}{q,qz}=\frac{1}{1-z}\qhyp21{\frac{a}{q},\frac{b}{q}}{\frac{ab}{q}}{q,qz}=\sum_{n=0}^\infty a_nz^n.
\end{equation*}
By expanding the geometric series $(1-z)^{-1}$ then one can see that from \eqref{ex318prop}, using the $q$-Pfaff--Saalsch\"utz sum \cite[(II.12)]{GaspRah}, that one obtains
\begin{equation*}
a_n=\sum_{k=0}^n \frac{(\frac{a}{q},\frac{b}{q};q)_k}{(q,\frac{ab}{q};q)_k}q^k=
\frac{(a,b;q)_n}{(q,\frac{ab}{q};q)_n}.
\end{equation*}
Therefore from \eqref{ex318}, we see that 
\begin{eqnarray*}
&&\hspace{-3.5cm}\qhyp21{qa,qb}{qab}{q^2,z}
\qhyp21{\frac{a}{q},\frac{b}{q}}{\frac{ab}{q}}{q^2,qz}=\sum_{n=0}^\infty 
\frac{(ab;q^2)_n}{(qab;q^2)_n}
\frac{(a,b;q)_n}{(q,\frac{ab}{q};q)_n}
z^n,
\end{eqnarray*}
which completes the verification using
\eqref{sqPoch}. 
%$(a^2;q^2)_n=(\ppm a;q)_n$.
\end{proof}

\noindent Next we turn to \cite[Exercise 3.19]{GaspRah}.

\begin{lem}
\label{secondprop}
Let $0<|q|<1$, $a,b,c,z\in\CCast$, $|z|<1$, $|q^2cz|<|ab|$ and 
\begin{equation}
\frac{(\frac{qcz}{ab};q^2)_\infty}{(z;q^2)_\infty}\qhyp21{\frac{a}{q},b}{\frac{c}{q}}{q,\frac{cz}{ab}}
=\sum_{n=0}^\infty a_nz^n. 
\label{ex319prop}
\end{equation}
Then
\begin{equation}
\qhyp21{\frac{qc}{a},\frac{c}{qb}}{c}{q^2,z}
\qhyp21{a,b}{c}{q^2,\frac{cz}{ab}}=\sum_{n=0}^\infty
\frac{(\frac{c}{q};q^2)_n}{(c;q^2)_n}a_nz^n.
\label{ex319}
\end{equation}
\end{lem}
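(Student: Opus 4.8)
The plan is to recognize Lemma~\ref{secondprop} as a Cayley--Orr type product formula, the exact companion of Lemma~\ref{firstprop}. Just as the pair \eqref{ex318prop}--\eqref{ex318} is \cite[Exercise~3.18]{GaspRah}, the pair \eqref{ex319prop}--\eqref{ex319} is (a relabelling of) \cite[Exercise~3.19]{GaspRah}. The quickest route is therefore to quote that exercise under the parameter identification dictated by the statement, noting that the hypotheses $|z|<1$ and $|q^2cz|<|ab|$ are precisely what guarantee absolute convergence of all four basic hypergeometric series that appear, so that the rearrangements below are legitimate.

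For a self-contained derivation one follows the standard Cayley--Orr mechanism. The defining relation \eqref{ex319prop} introduces the coefficients $a_n$ as the Taylor coefficients (in $z$) of the product of the quotient $(\tfrac{qcz}{ab};q^2)_\infty/(z;q^2)_\infty$ with the base-$q$ series $\qhyp21{\frac{a}{q},b}{\frac{c}{q}}{q,\frac{cz}{ab}}$. To bring the left-hand side of \eqref{ex319} into contact with this, I would first apply Heine's $q$-Euler transformation (in base $q^2$) to the factor $\qhyp21{a,b}{c}{q^2,\frac{cz}{ab}}$, which reproduces a prefactor $(z;q^2)_\infty/(\frac{cz}{ab};q^2)_\infty$ together with a transformed ${}_2\phi_1$ in base $q^2$. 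Multiplying by the remaining factor $\qhyp21{\frac{qc}{a},\frac{c}{qb}}{c}{q^2,z}$ and expanding via the Cauchy product then expresses the coefficient of $z^n$ on the left as a single finite sum.

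The final, and genuinely technical, step is to identify that finite sum with $\frac{(\frac cq;q^2)_n}{(c;q^2)_n}\,a_n$. This is where the real content sits: one must reconcile the base-$q$ Pochhammer symbols inherent in $a_n$ with the base-$q^2$ symbols produced on the left, and then collapse the convolution by a balanced summation --- in practice the $q$-Pfaff--Saalsch\"utz sum \cite[(II.12)]{GaspRah} (or, equivalently, a Sears transformation), exactly as the subsequent verification of Theorem~\ref{thm21} does with \eqref{ex318prop}. I expect this coefficient-matching to be the main obstacle; once it is carried out, the two sides of \eqref{ex319} agree term by term in $z^n$, and the stated convergence conditions justify the interchange of summations used along the way, completing the proof.
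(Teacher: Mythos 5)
Your primary route---quoting \cite[Exercise~3.19]{GaspRah} under a suitable relabelling of parameters---is precisely the paper's own (implicit) approach: Lemma~\ref{secondprop} is stated there without further proof as the companion of Lemma~\ref{firstprop}, which is itself handled by citing \cite[Exercise~3.18]{GaspRah}, and the paper's equation labels for \eqref{ex319prop}--\eqref{ex319} confirm the provenance from Exercise~3.19. Your supplementary self-contained sketch (Euler transformation, Cauchy product, balanced summation) leaves its decisive coefficient-matching step unexecuted, but since the citation alone settles the lemma exactly as in the paper, the proposal stands as correct and essentially identical in method.
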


\begin{thm}
Nassrallah's second result \eqref{NassrallahB} is verified by Lemma \ref{secondprop} with $c=qab$.
\label{thm39}
\end{thm}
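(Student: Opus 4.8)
The plan is to follow the same template used just above to verify Theorem~\ref{thm21} via Lemma~\ref{firstprop}, now specializing Lemma~\ref{secondprop} at $c=qab$. First I would substitute $c=qab$ into the left-hand side of \eqref{ex319}. Since then $\frac{qc}{a}=q^2b$, $\frac{c}{qb}=a$ and $\frac{cz}{ab}=qz$, the left-hand side of \eqref{ex319} collapses to
\[
\qhyp21{q^2b,a}{qab}{q^2,z}\,\qhyp21{a,b}{qab}{q^2,qz}.
\]
Observe that the parameter $a$ is shared by both factors, which forces a \emph{non-symmetric} terminal specialization: under the replacement $a\mapsto qb^2$, $b\mapsto\frac{a^2}{q}$ (so that $qab\mapsto qa^2b^2$) this product becomes exactly $\qhyp21{qa^2,qb^2}{qa^2b^2}{q^2,z}\,\qhyp21{\frac{a^2}{q},qb^2}{qa^2b^2}{q^2,qz}$, i.e.\ the product of the two ${}_2\phi_1$ series on the left-hand side of \eqref{NassrallahB}. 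It therefore remains to show that the generating coefficients delivered by Lemma~\ref{secondprop} reproduce the summand of the balanced ${}_4\phi_3$ on the right-hand side of \eqref{NassrallahB}.

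Next I would determine the coefficients $a_n$ from \eqref{ex319prop} at $c=qab$. In that case $\frac{qcz}{ab}=q^2z$ and $\frac{c}{q}=ab$, so the $q$-shifted-factorial prefactor telescopes, $(q^2z;q^2)_\infty/(z;q^2)_\infty=1/(1-z)$, and \eqref{ex319prop} reads
\[
\sum_{n=0}^\infty a_nz^n=\frac{1}{1-z}\,\qhyp21{\frac aq,b}{ab}{q,qz}.
\]
Expanding $(1-z)^{-1}$ as a geometric series and reading off the coefficient of $z^n$ gives the partial sum $a_n=\sum_{k=0}^n\frac{(\frac aq,b;q)_k}{(q,ab;q)_k}q^k$. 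Exactly as in the verification of Theorem~\ref{thm21}, this partial sum is evaluated in closed product form by the $q$-Pfaff--Saalsch\"utz summation \cite[(II.12)]{GaspRah}: reversing the order of summation $k\mapsto n-k$ recasts it as a terminating balanced ${}_3\phi_2$ at argument $q$ (with $q$ itself appearing as one of the three numerator parameters), whose evaluation yields $a_n=\frac{(a,qb;q)_n}{(q,ab;q)_n}$.

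With $a_n$ in hand, the right-hand side of \eqref{ex319} becomes $\sum_{n\ge0}\frac{(ab;q^2)_n}{(qab;q^2)_n}\frac{(a,qb;q)_n}{(q,ab;q)_n}z^n$, where I have used $\frac{c}{q}=ab$ and $c=qab$. Invoking \eqref{sqPoch} to rewrite $(ab;q^2)_n=(\pm\sqrt{ab};q)_n$ and $(qab;q^2)_n=(\pm\sqrt{qab};q)_n$, this series is precisely $\qhyp43{a,qb,\pm\sqrt{ab}}{ab,\pm\sqrt{qab}}{q,z}$. Applying once more the substitution $a\mapsto qb^2$, $b\mapsto\frac{a^2}{q}$ sends this ${}_4\phi_3$ to $\qhyp43{a^2,qb^2,\pm ab}{a^2b^2,\pm q^\frac12ab}{q,z}$, which is the right-hand side of \eqref{NassrallahB}. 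Combined with the first paragraph, both sides of \eqref{NassrallahB} are matched, completing the verification.

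The one genuinely nontrivial step I anticipate is the closed-form evaluation of the partial sum $a_n$: the summation-reversal that turns $\sum_{k=0}^n\frac{(\frac aq,b;q)_k}{(q,ab;q)_k}q^k$ into a balanced terminating ${}_3\phi_2$ must be carried out so that the $q$-balance hypothesis of $q$-Pfaff--Saalsch\"utz is met, precisely as in the companion proof for Theorem~\ref{thm21}. Everything else amounts to bookkeeping with $q$-shifted factorials through \eqref{sqPoch} and the terminal (non-symmetric) specialization of the parameters; the convergence restrictions $|z|<1$ and $|q^2cz|<|ab|$ of Lemma~\ref{secondprop} hold automatically for $c=qab$ whenever $|q|<1$ and $|z|<1$, since then $|q^2cz|=|q^3||ab||z|<|ab|$.
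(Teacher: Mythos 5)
Your verification is correct and takes essentially the same approach as the paper's: both specialize Lemma~\ref{secondprop} at $c=qab$, expand $(1-z)^{-1}$ in \eqref{ex319prop} to obtain $a_n=\sum_{k=0}^n\frac{(\frac aq,b;q)_k}{(q,ab;q)_k}q^k=\frac{(a,qb;q)_n}{(q,ab;q)_n}$ via the $q$-Pfaff--Saalsch\"utz summation, and then match the resulting series to the ${}_4\phi_3$ in \eqref{NassrallahB} using \eqref{sqPoch}. The only (immaterial) difference is bookkeeping: the paper performs the relabeling $(a,b,c)\mapsto(qb,\frac aq,qab)$ in Lemma~\ref{secondprop} up front and squares the parameters at the end, whereas you substitute $c=qab$ directly and apply the equivalent net substitution $a\mapsto qb^2$, $b\mapsto\frac{a^2}{q}$ in a single step at the end.
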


\begin{proof}
If one takes  $(a,b,c)\mapsto(qb,\frac{a}{q},qab)$
in Lemma \ref{secondprop},
then the left-hand side of
\eqref{ex319} becomes the left-hand side of \eqref{ex319prop}. Replacing $c=qab$ in \eqref{ex319prop} produces
\begin{equation*}
\frac{(q^2z;q^2)_\infty}{(z;q^2)_\infty}
\qhyp21{\frac{a}{q},b}{ab}{q,qz}=\frac{1}{1-z}\qhyp21{\frac{a}{q},b}{ab}{q,qz}=\sum_{n=0}^\infty a_nz^n.
\end{equation*}
By expanding the geometric series $(1-z)^{-1}$ then one can see that from \eqref{ex319prop}, using the $q$-Pfaff--Saalsch\"utz sum \cite[(II.12)]{GaspRah}, that one obtains
\begin{equation*}
a_n=\sum_{k=0}^n \frac{(\frac{a}{q},b;q)_k}{(q,ab;q)_k}q^k=
\frac{(a,qb;q)_n}{(q,ab;q)_n}.
\end{equation*}
Therefore from \eqref{ex319}, we see that 
\begin{eqnarray*}
&&\hspace{-3.5cm}\qhyp21{qa,qb}{qab}{q^2,z}\qhyp21{\frac{a}{q},qb}{qab}{q^2,qz}=\sum_{n=0}^\infty 
\frac{(ab;q^2)_n}{(qab;q^2)_n}
\frac{(a,qb;q)_n}{(q,ab;q)_n}
z^n,
%\label{nextit2}
\end{eqnarray*}
which completes the verification using
\eqref{sqPoch}. 
\end{proof}

\noindent Next we turn to \cite[Exercise 3.17]{GaspRah}.
\begin{lem}
\label{zeroprop}
Let $0<|q|<1$, $a,b,c,z\in\CCast$, $|z|<1$, $|cz|<|qab|$ and 
\begin{equation}
\frac{(\frac{cz}{ab};q^2)_\infty}{(z;q^2)_\infty}\qhyp21{a,b}{c}{q,\frac{cz}{qab}}
=\sum_{n=0}^\infty a_nz^n. 
\label{ex317prop}
\end{equation}
Then
\begin{equation}
\qhyp21{\frac{c}{a},\frac{c}{b}}{q c}{q^2,z}
\qhyp21{a,b}{qc}{q^2,\frac{cz}{qab}}=\sum_{n=0}^\infty
\frac{(c;q^2)_n}{(qc;q^2)_n}a_nz^n.
\label{ex317}
\end{equation}
\end{lem}

\begin{thm}
\label{Thm417}
Let $0<|q|<1$, $a,z\in\CCast$ such that $|z|<1$. Then one has the following three product formulas:
\begin{eqnarray}
&&\hspace{-2.0cm}\qhyp21{\ppm q^{\frac 12}{\sqrt{a}}}{qa}{q,z}
\qhyp21{\ppm \sqrt{a}}{qa}{q,-z}=
\sum_{n=0}^\infty\frac{(q^{1+2\lfloor\frac{n+1}2\rfloor}a^2;q^2)_{\lfloor\frac{n}2\rfloor}\,a^{(n-2\lfloor\frac{n}2\lfloor)}}
{(q^2;q^2)_{\lfloor\frac{n}2\rfloor}
(qa;q)_n}z^n\nonumber\\
&&\hspace{-0.0cm}=\qhyp43{\ppm q^{\frac 12}a,\ppm q^{\frac 32}a}{qa,q^2a,qa^2}{q^2,z^2}+\frac{az}{(1-qa)}\qhyp43{\ppm q^{\frac 32}a,\ppm q^{\frac 52}a}{q^2a,q^3a,q^3a^2}{q^2,z^2},\label{eq:thm4151}\\
&&\hspace{-2.0cm}\qhyp21{\ppm \sqrt{a}}{q^{\frac 12}a}{q,z}
\qhyp21{\ppm \sqrt{a}}{q^{\frac 12}a}{q,-q^{\frac 12}z}=
\sum_{n=0}^\infty
\frac{(q^{\frac 12};-q^{\frac 12})_n(a^2;q^2)_{n}}{(q;q)_n(q^{\frac 12}a;q)_n(-a;-q^{\frac 12})_n}z^n\nonumber\\
&&\hspace{0.0cm}=\qhyp87{\ppm \sqrt{a},\ppm q^{\frac 12}\sqrt{a},\ppm{\mathrm i}\sqrt{a},\ppm{\mathrm i}q^{\frac 12}\sqrt{a}}{-q^{\frac 12},\ppm q^\frac14\sqrt{a},\ppm q^\frac34\sqrt{a},-a,q^{\frac 12}a}{q,z^2}\nonumber\\
&&\hspace{1cm}+\frac{(1-a)z}{(1+q^{\frac 12})(1-q^{\frac 12}a)}
\qhyp87{\ppm q^{\frac 12}\sqrt{a},\ppm q\sqrt{a},\ppm{\mathrm i}q^{\frac 12}\sqrt{a},\ppm{\mathrm i}q\sqrt{a}}{-q^{\frac 32},\ppm q^\frac34\sqrt{a},\ppm q^\frac54\sqrt{a},q^{\frac 12}a,-qa}{q,z^2},\label{eq:thm4152}\\
&&\hspace{-2.4cm}\qhyp21{\ppm \sqrt{a}}{a}{q,z}
\qhyp21{\ppm q^{-\frac 12}\sqrt{a}}{a}{q,-qz}
=
\sum_{n=0}^\infty
\frac{(q^{-1+2\lfloor\frac{n+1}2\rfloor}a^2;q^2)_{\lfloor\frac{n}2\rfloor}}{(q^2;q^2)_{\lfloor\frac{n}2\rfloor}(a;q)_n}
z^n\nonumber\\
&&\hspace{0.0cm}=\qhyp43{\ppm q^{-\frac 12}a,\ppm q^{\frac 12}a}{a,qa,q^{-1}a^2}{q^2,z^2}+\frac{z}{(1-a)}\qhyp43{\ppm q^{\frac 12}a,\ppm q^{\frac 32}a}{qa,q^2a,qa^2}{q,z^2}.\label{eq:thm4153}
\end{eqnarray}
\end{thm}
\begin{proof}
We apply Lemma~\ref{zeroprop} with suitable specializations of the parameters.
Without yet specializing the parameters, we observe, that by virtue of
the nonterminating $q$-binomial theorem, the coefficients $a_n$ in \eqref{ex317prop}
can be computed as
\begin{align}\label{eq:ancoeff}
a_n&=\sum_{j+k=n}\frac{(\frac{c}{ab};q^2)_j}{(q^2;q^2)_j}\frac{(a,b;q)_k}{(q,c;q)_k}
\left(\frac{c}{qab}\right)^k=\frac{(\frac{c}{ab};q^2)_n}{(q^2;q^2)_n}\sum_{k=0}^n
\frac{(a,b;q)_k}{(q,c;q)_k}\frac{(q^{-2n};q^2)_k}{(\frac{q^{2-2n}ab}{c};q^2)_k}q^k\notag\\
&=\frac{(\frac{c}{ab};q^2)_n}{(q^2;q^2)_n}
\qhyp43{\ppm q^{-n},a,b}{c,\ppm\frac{q^{1-n}\sqrt{ab}}{\sqrt{c}}}{q,q},
\end{align}
the $_4\phi_3$ series being terminating and 2-balanced.
Comparing the above series with the terminating 2-balanced $_4\phi_3$ summations
derived in Subsection~\ref{subsec:kbal}, we find that the coefficients
$a_n$ reduce to closed form products by instances of Theorems~\ref{thm:n3},
\ref{thm:n4} and Corollary~\ref{cor:310s}
in the three different cases
$(a,b,c)\mapsto(a,-a,q^\sigma a^2)$, where $\sigma\in\{1,0,-1\}$.
The computation in these three cases are as follows:

%\medskip
\noindent
\underline{Case 1}: $(a,b,c)\mapsto (a,-a,q a^2)$:

By Theorem~\ref{thm:n3} we have
\begin{align*}
a_n&=\frac{(-q;q^2)_n}{(q^2;q^2)_n}a^{2(n-2\lfloor\frac n2\rfloor)}
\frac{(q;q^2)_{\lfloor\frac{n+1}2\rfloor}(-q^{1+2\lfloor\frac{n+1}2\rfloor}a^2;q^2)_{\lfloor\frac n2\rfloor}}{(qa^2;q^2)_{\lfloor\frac{n+1}2\rfloor}(-q^{1+2\lfloor\frac{n+1}2\rfloor};q^2)_{\lfloor\frac n2\rfloor}}\\
&=a^{2(n-2\lfloor\frac n2\rfloor)}
\frac{(q;q^2)_{\lfloor\frac{n+1}2\rfloor}(-q;q^2)_{\lfloor\frac{n+1}2\rfloor}(-q^{1+2\lfloor\frac{n+1}2\rfloor}a^2;q^2)_{\lfloor\frac n2\rfloor}}{(q^2;q^2)_n(qa^2;q^2)_{\lfloor\frac{n+1}2\rfloor}}\\
&=a^{2(n-2\lfloor\frac n2\rfloor)}
\frac{(q^2;q^4)_{\lfloor\frac{n+1}2\rfloor}(-q^{1+2\lfloor\frac{n+1}2\rfloor}a^2;q^2)_{\lfloor\frac n2\rfloor}}{(q^2;q^2)_n(qa^2;q^2)_{\lfloor\frac{n+1}2\rfloor}}\\
&=a^{2(n-2\lfloor\frac n2\rfloor)}
\frac{(-q^{1+2\lfloor\frac{n+1}2\rfloor}a^2;q^2)_{\lfloor\frac n2\rfloor}}{(q^4;q^4)_{\lfloor\frac{n}2\rfloor}(qa^2;q^2)_{\lfloor\frac{n+1}2\rfloor}}\\
&=a^{2(n-2\lfloor\frac n2\rfloor)}
\frac{(-qa^2;q^2)_n}{(q^4;q^4)_{\lfloor\frac{n}2\rfloor}(qa^2;q^2)_{\lfloor\frac{n+1}2\rfloor}(-qa^2;q^2)_{\lfloor\frac{n+1}2\rfloor}}\\
&=a^{2(n-2\lfloor\frac n2\rfloor)}
\frac{(-qa^2;q^2)_n}{(q^4;q^4)_{\lfloor\frac{n}2\rfloor}(q^2a^4;q^4)_{\lfloor\frac{n+1}2\rfloor}}.
\end{align*}

%\medskip
\noindent
\underline{Case 2}: $(a,b,c)\mapsto (a,-a,a^2)$:

By Theorem~\ref{thm:n4} we have
\begin{align*}
a_n&=\frac{(-1;q^2)_n}{(q^2;q^2)_n}(-1)^n\frac{(q^n+q^{-n})}2
\frac{(q;q^2)_{\lfloor\frac{n+1}2\rfloor}(-q^{2\lfloor\frac{n+1}2\rfloor}a^2;q^2)_{\lfloor\frac n2\rfloor}}{(qa^2;q^2)_{\lfloor\frac{n}2\rfloor}(-q^{2+2\lfloor\frac{n}2\rfloor};q^2)_{\lfloor\frac{n+1}2\rfloor}}\\
&=(-1)^n q^{-n}
\frac{(-q^2;q^2)_n(q;q^2)_{\lfloor\frac{n+1}2\rfloor}(-q^{2\lfloor\frac{n+1}2\rfloor}a^2;q^2)_{\lfloor\frac n2\rfloor}}{(q^2;q^2)_n(qa^2;q^2)_{\lfloor\frac{n}2\rfloor}(-q^{2+2\lfloor\frac{n}2\rfloor};q^2)_{\lfloor\frac{n+1}2\rfloor}}\\
&=(-1)^n q^{-n}
\frac{(-q^2;q^2)_{\lfloor\frac{n}2\rfloor}(q;q^2)_{\lfloor\frac{n+1}2\rfloor}(-a^2;q^2)_{n}}{(q^2;q^2)_n(qa^2;q^2)_{\lfloor\frac{n}2\rfloor}(-a^2;q^2)_{\lfloor\frac {n+1}2\rfloor}}\\
&=(-q)^{-n}
\frac{(q;-q)_n(-a^2;q^2)_{n}}{(q^2;q^2)_n(-a^2;-q)_n},
\end{align*}
which remarkably involves base $-q$ (``minus $q$", which is not a mistake!) instead of the usual base $q$.

%\medskip
\noindent
\underline{Case 3}: $(a,b,c)\mapsto (a,-a,q^{-1} a^2)$:

By Corollary~\ref{cor:310s} we have
\begin{align*}
a_n&=\frac{(-q^{-1};q^2)_n}{(q^2;q^2)_n}
(-1)^n\frac{(1+q^{1-2n})}{(1+q)}
\frac{(q;q^2)_{\lfloor\frac{n+1}2\rfloor}(-q^{-1+2\lfloor\frac{n+1}2\rfloor}a^2;q^2)_{\lfloor\frac n2\rfloor}}{(q^{-1}a^2;q^2)_{\lfloor\frac{n+1}2\rfloor}(-q^{1+2\lfloor\frac{n+1}2\rfloor};q^2)_{\lfloor\frac n2\rfloor}}\\
&=(-1)^n q^{-2n}
\frac{(-q;q^2)_n(q;q^2)_{\lfloor\frac{n+1}2\rfloor}(-q^{-1+2\lfloor\frac{n+1}2\rfloor}a^2;q^2)_{\lfloor\frac n2\rfloor}}{(q^2;q^2)_n(q^{-1}a^2;q^2)_{\lfloor\frac{n+1}2\rfloor}(-q^{1+2\lfloor\frac{n+1}2\rfloor};q^2)_{\lfloor\frac n2\rfloor}}\\
&=(-1)^n q^{-2n}
\frac{(-q;q^2)_{\lfloor\frac{n+1}2\rfloor}(q;q^2)_{\lfloor\frac{n+1}2\rfloor}(-q^{-1}a^2;q^2)_{n}}{(q^2;q^2)_n(q^{-1}a^2;q^2)_{\lfloor\frac{n+1}2\rfloor}(-q^{-1}a^2;q^2)_{\lfloor\frac{n+1}2\rfloor}}\\
&=(-1)^n q^{-2n}
\frac{(q^2;q^4)_{\lfloor\frac{n+1}2\rfloor}(-q^{-1}a^2;q^2)_{n}}{(q^2;q^2)_n(q^{-2}a^4;q^4)_{\lfloor\frac{n+1}2\rfloor}}\\
&=(-1)^n q^{-2n}
\frac{(-q^{-1}a^2;q^2)_{n}}{(q^4;q^4)_{\lfloor\frac{n}2\rfloor}(q^{-2}a^4;q^4)_{\lfloor\frac{n+1}2\rfloor}}
.
\end{align*}
%{\bf [HSC: I don't understand the meaning of the three labels below.]}
With these formulas for $a_n$ we can write out \eqref{ex317} in the respective cases.
For the first case, after replacing $q$ by $q^{\frac 12}$ amd $a$ by $\sqrt{a}$, this yields \eqref{eq:thm4151}.
In the second case, after replacing $z$ by $-qz$, and subsequently $q$ by $q^{\frac 12}$ and $a$ by $\sqrt{a}$, this yields \eqref{eq:thm4152}.
Finally, In the third case, after replacing $z$ by $-q^2z$, and subsequently $q$ by $q^{\frac 12}$ and $a$ by $\sqrt{a}$, this yields \eqref{eq:thm4153}.
\end{proof}
We offer another application of Lemma~\ref{zeroprop}, similar to those applications in Theorem~\ref{Thm417} but depending on a additional parameter and where the right-hand side involves a sum of two series.

\begin{thm}
Let $0<|q|<1$, $a,b,z\in\CCast$ such that $|z|<1$. Then one has the following product formula:
\begin{eqnarray}
&&\hspace{-0.5cm}\qhyp21{q^2a,q^2b}{q^3ab}{q^2,z}
\qhyp21{a,b}{q^3ab}{q^2,qz}\notag\\
&&\hspace{0.0cm}=\frac{(1-q)(1-qab)}{(1-qa)(1-qb)}
\qhyp43{qa,qb,\ppm q\sqrt{ab}}{qab,\ppm \sqrt{q^3ab}}{q,z}+
\frac{q\,(1-a)(1-b)}{(1-qa)(1-qb)}
\qhyp43{qa,qb,\ppm q\sqrt{ab}}{q^2ab,\ppm \sqrt{q^3ab}}{q,z}
.\label{eq:thm418}
\end{eqnarray}
\end{thm}
\begin{proof}
The analysis is similar to that in the proof of Theorem~\ref{Thm417}.
Again, we apply Lemma~\ref{zeroprop} but now with the specialization $c=q^2ab$.
The coefficients in \eqref{ex317prop} can be computed by the $c=q^2ab$ case of \eqref{eq:ancoeff},
thus
\begin{equation*}
a_n=\qhyp32{a,b,q^{-n}}{q^2ab,q^{-n}}{q,q},
\end{equation*}
which is specific terminating 2-balanced $_3\phi_2$ series.
The latter can be evaluated by the $c=q^2ab$ case of the general
terminating 2-balanced $_3\phi_2$ summation, i.e.,
\begin{equation*}
\qhyp32{a,b,q^{-n}}{c,\frac{q^{2-n}ab}c}{q,q}=
\frac{(1-\frac cq)(1-\frac c{qab})}{(1-\frac c{qa})(1-\frac c{qb})}
\frac{(\frac c{qa},\frac c{qb};q)_{n}}{(\frac cq,\frac c{qab};q)_{n}}
+\frac{c\,(1-a)(1-b)}{qab\,(1-\frac c{qa})(1-\frac c{qb})}
\frac{(\frac c{qa},\frac c{qb};q)_{n}}{(c,\frac c{qab};q)_{n}},
\end{equation*}
which itself readily follows from the contiguous relation \cite[(2.3)]{Krattenthaler1993},
in combination with the $q$-Pfaff--Saalsch\"utz summation \cite[(II.12)]{GaspRah}.
We thus obtain
\begin{equation*}
a_n=
\frac{(1-q)(1-qab)}{(1-qa)(1-qb)}
\frac{(qa,qb;q)_{n}}{(q,qab;q)_{n}}
+\frac{q\,(1-a)(1-b)}{(1-qa)(1-qb)}
\frac{(qa,qb;q)_{n}}{(q,q^2ab;q)_{n}}.
\end{equation*}
Writing out \eqref{ex317} for $c=q^2ab$ and this choice of $a_n$ gives \eqref{eq:thm418}.
\end{proof}
One can also derive other product formulas using the method employed in this section.
One interesting example is obtained by the following slight modification
of the analysis that was used to derive Theorem~\ref{thm39}.

\begin{rem}
If one follows the same procedure as in the proof
of Theorem~\ref{thm39}, but instead takes
$(a,b,c)\mapsto(\frac{a}{q},qb,qab)$ in Lemma \ref{secondprop},
one obtains a different $a_n$, namely
\begin{equation*}
a_n=\frac{(q^2b,\frac{a}{q};q)_n}{(q,ab;q)_n}.
\end{equation*}
This is obtained using a different instance of the $q$-Pfaff--Saalsch\"utz summation \cite[(II.12)]{GaspRah}.
Following the same procedure as in the proof of 
Theorem \ref{thm39} produces
\begin{eqnarray*}
&&\hspace{-4cm}\qhyp21{\frac{a^2}{q},q^3b^2}{qa^2b^2}{q^2,z}\qhyp21{\frac{a^2}{q},qb^2}{qa^2b^2}{q^2,qz}=\qhyp43{\frac{a^2}{q},q^2b^2,\ppm ab}{a^2b^2,\ppm q^\frac12 ab}{q,z},
\end{eqnarray*}
where $|q|<1$ and $|z|<1$ for convergence.
However, this result can be seen to be equivalent
to Nassrallah's second result \eqref{NassrallahB} by suitably shifting $a$ and $b$.
\end{rem}

\section{Conclusion}\label{sec:con}
In this paper we focused on known and new results related to the
Ismail--Wilson generating function for Askey--Wilson
polynomials~\eqref{AWgf}. In Theorem~\ref{thm:32pf} we were able to give
an extension of \eqref{AWgf} that contains an extra parameter $u$
(and reduces to the former if $u=0$),
which we used to deduce a quadruple basic hypergeometric summation formula
in Corollary~\ref{cor:1.3}.
Independently, by specializing the Ismail--Wilson generating
function \eqref{AWgf} in different ways, such that the Askey--Wilson
polynomials that appear in the expansion simplify (by virtue of existing
summations for terminating balanced $_4\phi_3$ series), we were able to
identify a number of formulas for products of $_2\phi_1$ basic
hypergeometric series (some of them known, some of them new) --
for which we also provided corresponding integral representations.
In our effort to put the results obtained into context, we also
described some alternative ways to arrive at several of the results,
such as invoking Cayley--Orr type expansion formulas. By applying
meaningful variations of those derivations, we succeeded in finding 
yet further new identities. Altogether, we believe that the material 
provided in this paper forms a valuable contribution to identities satisfied
by (specialized) Askey--Wilson polynomials.

%\poro{\bf [HSC: We were able to find product formulas using \cite[Exercises 3.18--19]{GaspRah}, however I don't see how this technique can be easily applicable to \cite[Exercise 3.17]{GaspRah}.}

\section*{Acknowledgements}
We would like to thank the reviewers for their helpful remarks.
In addition we would like to thank Slobodan Damjanovic for pointing us to
relevant literature and how certain results are connected with each other
that we otherwise would have missed.
The second author was partially supported by
Austrian Science Fund FWF
\href{https://doi.org/10.55776/P32305}{10.55776/P32305}.

%{Much appreciation to Dick Askey for originally pointing
%the first author in the direction of Rahman's ${}_{10}\phi_9$ linearization result, and to Mourad E.~H.~Ismail for valuable discussions.}

%%%\section*{References}
%\bibliographystyle{plain}
%\bibliography{refbib}

\begin{thebibliography}{10}

\bibitem{Andrews1976}
G.~E. Andrews.
\newblock On $q$-analogues of the {W}atson and {W}hipple summations.
\newblock {\em SIAM Journal on Mathematial Analysis}, 7:332--336, 1976.

\bibitem{A2011}
G.~E. Andrews.
\newblock On {S}hapiro's {C}atalan convolution.
\newblock {\em Advances in Applied Mathematics}, 46:15--24, 2011.

\bibitem{Bailey1928}
W.~N. Bailey.
\newblock Products of generalized hypergeometric series.
\newblock {\em Proceedings of the London Mathematical Society. Second Series},
  28(4):242--254, 1928.

\bibitem{Bailey1941}
W.~N. Bailey.
\newblock A note on certain {$q$}-identities.
\newblock {\em The Quarterly Journal of Mathematics. Oxford Series},
  12:173--175, 1941.

\bibitem{Bailey64}
W.~N. {Bailey}.
\newblock {\em {G}eneralized {H}ypergeometric {S}eries}.
\newblock Cambridge Tracts in Mathematics and Mathematical Physics, No. 32.
  Stechert-Hafner, Inc., New York, 1964.

\bibitem{BerkovichWarnaar2005}
A.~Berkovich and S.~O. Warnaar.
\newblock Positivity preserving transformations for {$q$}-binomial
  coefficients.
\newblock {\em Transactions of the American Mathematical Society},
  357(6):2291--2351, 2005.

\bibitem{BressoudIsmailStanton}
D.~Bressoud, M.~E.~H. Ismail, and D.~Stanton.
\newblock Change of base in {B}ailey pairs.
\newblock {\em Ramanujan Journal. An International Journal Devoted to the Areas
  of Mathematics Influenced by Ramanujan}, 4(4):435--453, 2000.

\bibitem{Carlitz1969}
L.~Carlitz.
\newblock Some formulas of {F}. {H}. {J}ackson.
\newblock {\em Monatshefte f\"{u}r Mathematik}, 73:193--198, 1969.

\bibitem{Clausen1828}
T.~Clausen.
\newblock {\"U}ber die {F{\"a}lle}, wenn die {Reihe} von der {Form} \[
  y=1+\frac{\alpha}{1}\cdot\frac{\beta}{\gamma}
  x+\frac{\alpha\cdot\alpha+1}{1\cdot 2}\cdot
  \frac{\beta\cdot\beta+1}{\gamma\cdot\gamma+1}x^2 +etc. \] ein {Quadrat} von
  der {Form} \[ z=
  1+\frac{\alpha'}{1}\cdot\frac{\beta'}{\gamma'}\cdot\frac{\delta'}{\varepsilon'}x+\frac{\alpha'
  \cdot \alpha'+1}{1\cdot 2} \cdot \frac{\beta'\cdot \beta'+1}{\gamma'\cdot
  \gamma'+1}\cdot \frac{\delta' \delta'+1} {\varepsilon' \varepsilon'+1}x^2 +
  etc. \] hat.
\newblock {\em Journal f{\"u}r die Reine und Angewandte Mathematik}, 3:89--91,
  1828.
\newblock In German.

\bibitem{CohlCostasSantos20b}
H.~S. {Cohl} and R.~S. {Costas-Santos}.
\newblock {Symmetry of terminating basic hypergeometric representations of the
  {A}skey--{W}ilson polynomials}.
\newblock {\em Journal of Mathematical Analysis and Applications},
  517(1):126583, 2023.

\bibitem{CohlCostasSantos23}
H.~S. {Cohl} and R.~S. {Costas-Santos}.
\newblock {Utility of integral representations for basic hypergeometric
  functions and orthogonal polynomials}.
\newblock {\em The Ramanujan Journal, Special Issues in Memory of Richard
  Askey}, 61:649--674, May 2023.

\bibitem{NIST:DLMF}
{\it NIST Digital Library of Mathematical Functions}.
\newblock \href{https://dlmf.nist.gov/}{{\bf\tt\normalsize
  https://dlmf.nist.gov/}}, Release 1.2.6 of 2026-03-15.
\newblock F.~W.~J. Olver, A.~B. {Olde Daalhuis}, D.~W. Lozier, B.~I. Schneider,
  R.~F. Boisvert, C.~W. Clark, B.~R. Miller, B.~V. Saunders, H.~S. Cohl, and
  M.~A. McClain, eds.

\bibitem{Erdelyi}
A.~Erd{\'e}lyi, W.~Magnus, F.~Oberhettinger, and F.~G. Tricomi.
\newblock {\em {Higher Transcendental Functions. {V}ol. {I}}}.
\newblock Robert E. Krieger Publishing Co. Inc., Melbourne, Fla., 1981.

\bibitem{GaspRah}
G.~Gasper and M.~Rahman.
\newblock {\em {B}asic {H}ypergeometric {S}eries}, volume~96 of {\em
  Encyclopedia of Mathematics and its Applications}.
\newblock Cambridge University Press, Cambridge, second edition, 2004.
\newblock With a foreword by Richard Askey.

\bibitem{G2013}
V.~J.~W. Guo.
\newblock Proof of {A}ndrews' conjecture on a $_4\phi_3$ summation.
\newblock {\em Journal of Difference Equations and Applications},
  19(46):1035--1041, 2013.

\bibitem{Ismail}
M.~E.~H. Ismail.
\newblock {\em Classical and {Q}uantum {O}rthogonal {P}olynomials in {O}ne
  {V}ariable}, volume~98 of {\em {Encyclopedia of Mathematics and its
  Applications}}.
\newblock Cambridge University Press, Cambridge, 2005.
\newblock With two chapters by Walter Van Assche.

\bibitem{IsmailRahman11}
M.~E.~H. Ismail and M.~Rahman.
\newblock Connection relations and expansions.
\newblock {\em Pacific Journal of Mathematics}, 252(2):427--446, 2011.

\bibitem{IsmailWilson82}
M.~E.~H. Ismail and J.~A. Wilson.
\newblock Asymptotic and generating relations for the {$q$}-{J}acobi and
  {$_{4}\varphi _{3}$} polynomials.
\newblock {\em Journal of Approximation Theory}, 36(1):43--54, 1982.

\bibitem{Jackson1940}
F.~H. Jackson.
\newblock The {$q^\theta$} equations whose solutions are products of solutions
  of {$q^\theta$} equations of lower order.
\newblock {\em The Quarterly Journal of Mathematics. Oxford Series}, 11:1--17,
  1940.

\bibitem{Jackson1941}
F.~H. Jackson.
\newblock Certain {$q$}-identities.
\newblock {\em The Quarterly Journal of Mathematics. Oxford Series},
  12:167--172, 1941.

\bibitem{Jain1981}
V.~K. Jain.
\newblock Some transformations of basic hypergeometric functions, {II}.
\newblock {\em SIAM J. Math. Anal.}, 12:957--961, 1981.

\bibitem{Koekoeketal}
R.~Koekoek, P.~A. Lesky, and R.~F. Swarttouw.
\newblock {\em Hypergeometric {O}rthogonal {P}olynomials and {T}heir
  {$q$}-{A}nalogues}.
\newblock Springer Monographs in Mathematics. Springer-Verlag, Berlin, 2010.
\newblock With a foreword by Tom H. Koornwinder.

\bibitem{KoornwinderKLSadd}
T.~H. {Koornwinder}.
\newblock {Additions to the formula lists in ``Hypergeometric Orthogonal
  Polynomials and Their $q$-Analogues'' by Koekoek, Lesky and Swarttouw}.
\newblock {\em \href{http://arxiv.org/abs/1401.0815}{\tt arXiv:1401.0815v2},
  see also
  \href{https://staff.fnwi.uva.nl/t.h.koornwinder/art/informal/KLSadd.pdf}{\tt
  https://staff.fnwi.uva.nl/t.h.koornwinder/art/informal/KLSadd.pdf}}, 2024.

\bibitem{Krattenthaler1993}
C.~Krattenthaler.
\newblock A systematic list of two- and three-term contiguous relations for
  basic hypergeometric series.
\newblock Unpublished manuscript, 12 pages, 1993.

\bibitem{Nassrallah}
B.~G. Nassrallah.
\newblock {\em {Some {Q}uadratic {T}ransformations and {P}rojection {F}ormulas
  for {B}asic {H}ypergeometric {S}eries}}.
\newblock PhD thesis, Carleton University, Ottawa, Canada, 1982.
\newblock xiv+190 pages.

\bibitem{Orr1899}
W.~M. Orr.
\newblock Theorems relating to the product of two hypergeometric series.
\newblock {\em Transactions of the Cambridge Philosophical Society}, 17:1--15,
  1899.

\bibitem{PauleSchorn}
P.~Paule and M.~Schorn.
\newblock A {M}athematica version of {Z}eilberger's algorithm for proving
  binomial coefficient identities.
\newblock {\em Journal of Symbolic Computation}, 20(5-6):673--698, 1995.
\newblock Symbolic computation in combinatorics $\Delta{_{1}}$ (Ithaca, NY,
  1993).

\bibitem{PetWilfZeil}
M.~{P}etkov{\v{s}}ek, H.~S. Wilf, and D.~Zeilberger.
\newblock {\em {$A=B$}}.
\newblock A K Peters Ltd., Wellesley, MA, 1996.
\newblock With a foreword by Donald E. Knuth, With a separately available
  computer disk.

\bibitem{Schlosser2018}
M.~J. {Schlosser}.
\newblock {$q$-{A}nalogues of two product formulas of hypergeometric functions
  by Bailey}.
\newblock In {\em {Frontiers in Orthogonal Polynomials and $q$-Series}},
  chapter~23, pages 445--449. World Scientific Publishing, Hackensack, NJ,
  2018.
\newblock Zuhair Nashed and Xin Li, editors.

\bibitem{Singh59}
V.~N. Singh.
\newblock The basic analogues of identities of the {C}ayley-{O}rr type.
\newblock {\em The Journal of the London Mathematical Society}, 34:15--22,
  1959.

\bibitem{Srivastava87}
H.~M. Srivastava.
\newblock Some formulas of {S}rinivasa {R}amanujan involving products of
  hypergeometric functions.
\newblock {\em Indian Journal of Mathematics}, 29(1):91--100, 1987.

\bibitem{SrivJain1986}
H.~M. Srivastava and V.~K. Jain.
\newblock {$q$}-{S}eries identities and reducibility of basic double
  hypergeometric functions.
\newblock {\em Canadian Journal of Mathematics. Journal Canadien de
  Math\'ematiques}, 38(1):215--231, 1986.

\bibitem{VermaJain1980}
A.~Verma and V.~K. Jain.
\newblock Transformations between basic hypergeometric series on different
  bases and identities of {R}ogers-{R}amanujan type.
\newblock {\em Journal of Mathematical Analysis and Applications},
  76(1):230--269, 1980.

\bibitem{VJ1983}
A.~Verma and V.~K. Jain.
\newblock $q$-{A}nalogue of a transformation of {W}hipple.
\newblock {\em Rocky Mountain Journal of Mathematics}, 13(4):639--650, 1983.

\bibitem{WGL2013}
C.~Wei, D.~Gong, and J.~Li.
\newblock Summation formulae for $q$-{W}atson type $_4\phi_3$ series.
\newblock {\em Discrete Mathematics}, 313(15):1589--1593, 2013.

\bibitem{WX2013}
C.~Wei and X.~Wang.
\newblock A standard proof of {A}ndrews' conjecture for $_4\phi_3$ series.
\newblock {\em Ars Combinatoria}, 119:65--69, 2015.

\end{thebibliography}

\def\cprime{$'$} \def\dbar{\leavevmode\hbox to 0pt{\hskip.2ex \accent"16\hss}d}

\end{document}